\documentclass{scrartcl}
\usepackage[numbers]{natbib}

\usepackage[dvipsnames]{xcolor} 

\usepackage[bookmarks=true]{hyperref}

\usepackage{amsmath}
\usepackage{amsthm}
\usepackage{amssymb}
\usepackage{stmaryrd}
\usepackage{bm}

\usepackage{tikz} 
\usepackage{tikz-cd}

\theoremstyle{plain}
\newtheorem{theorem}{Theorem}[section]
\newtheorem{lemma}[theorem]{Lemma}
\newtheorem{proposition}[theorem]{Proposition}
\newtheorem{corollary}[theorem]{Corollary}

\theoremstyle{definition}
\newtheorem{definition}[theorem]{Definition}
\newtheorem{example}[theorem]{Example}
\newtheorem{remark}[theorem]{Remark}

\newcommand{\A}{\mathbb{A}}
\newcommand{\C}{\mathbb{C}}
\newcommand{\F}{\mathbb{F}}
\newcommand{\G}{\mathbb{G}}
\newcommand{\N}{\mathbb{N}}

\newcommand{\Q}{\mathbb{Q}}

\newcommand{\Z}{\mathbb{Z}}

\newcommand{\frakm}{\mathfrak{m}}

\newcommand{\calA}{\mathcal{A}}

\newcommand{\calO}{\mathcal{O}}

\newcommand{\calV}{\mathcal{V}}

\newcommand{\Spec}{\mathrm{Spec}}

\newcommand{\Tr}{\mathrm{Tr}}
\newcommand{\Nm}{\mathrm{N}}
\newcommand{\Hom}{\mathrm{Hom}}
\newcommand{\Gal}{\mathrm{Gal}}

\newcommand{\ext}{\mathrm{ext}}

\DeclareMathOperator{\NP}{\mathrm{NP}}
\DeclareMathOperator{\HP}{\mathrm{HP}}

\newcommand{\et}{\mathrm{\acute{e}t}}
\newcommand{\tame}{\mathrm{tame}}
\newcommand{\id}{\mathrm{id}}

\newcommand{\tr}{\mathrm{tr}}

\newcommand{\Frob}{\mathrm{Frob}}

\mathchardef\mhyphen="2D

\DeclareOldFontCommand{\sf}{\normalfont\sffamily}{\mathsf}

\title{Newton Polygons of Sums on Curves II: Variation in $p$-adic Families}
\author{Joe Kramer-Miller and James Upton}
\date{\today}

\begin{document}

\maketitle

\begin{abstract}
	In this article we study the behavior of Newton polygons
	along $\Z_p$-towers of curves. Fix an ordinary curve $X$ 
	over a finite field $\mathbb{F}_q$ of characteristic $p$.
	By a $\Z_p$-tower $X_\infty/X$ we mean a 
	tower of covers $ \dots \to X_2 \to X_1 \to X$ with $\Gal(X_n/X) \cong \Z/p^n\Z$.
	We show that if the ramification along the tower is sufficiently moderate,
	then the slopes of the Newton polygon of $X_n$ are equidistributed
	in the interval $[0,1]$ as $n$ tends to $\infty$. Under a stronger
	congruence assumption on the ramification invariants, we completely determine 
	the slopes of the Newton polygon of each curve. This is the first result towards `regularity'
	in Newton polygon behavior for $\Z_p$-towers over higher genus curves.
	We also obtain similar results for $\Z_p$-towers twisted by a generic tame character.
	
\end{abstract}

\tableofcontents

\section{Introduction}

\subsection{Motivation}
An important theme in modern number theory is that of $p$-variation---the idea
of interpolating a family of mathematical objects or invariants by a $p$-adic parameter.
For instance, Iwasawa constructed an ``arithmetic'' $p$-adic $L$-function, which interpolates
the $p$-part of class groups of a $\Z_p$-tower over a number field. 
The existence (and finiteness) of this object allows one to deduce the remarkable
Iwasawa class number formula (see e.g. \cite{Washington}). 
A general principle is as follows: if $M$ is a motive over a number field $K$
and $\rho:G_K \to \Z_p$ is a additive character, we may consider a family
of motives $M \otimes \rho_\chi$. Here $\rho_\chi$ denotes the composition $\chi\circ \rho$ and $\chi$ varies over the space $\Hom(\Z_p,\C_p^\times)$ of continuous characters. 
One expects that $p$-adic families of invariants can be constructed that interpolate
classical invariants. 
For example, we may consider a motive $M_f$ corresponding to an eigenform $f$.
The algebraic special values of $L(M_f \otimes \chi,s)^\mathrm{alg}$, where $\chi$ is a Dirichlet character of $p$-power degree,
can be parameterized by a $p$-adic $L$-function (see e.g. \cite{Mazur-Tate-Teitelbaum}). This is somewhat miraculous, as it tells us that $L(M_f\otimes \chi,1)^\mathrm{alg}$
is close to $L(M_f\otimes \chi', 1)^\mathrm{alg}$ $p$-adically if $\chi$ and $\chi'$ are sufficiently close. 

It is natural to ask for similar $p$-adic variational phenomena to occur over function fields. 
Let $X$ be a smooth affine curve over a finite field $\mathbb{F}_q$
with smooth compactification $\overline{X}$. Let $M$ be a motive (we are working $p$-adically so it
makes sense to realize $M$ as an overconvergent $F$-isocrystal on $X$) and let $\rho: \pi_1(X) \to \Z_p$ be a continuous character. Unlike the number field case,
it is relatively easy to construct a $p$-adic object that interpolates the $L$-functions $L(M \otimes \rho_\chi,s)$. 
This object will not be well behaved in general---for instance, the underlying Iwasawa module will no longer be finitely
generated. However, we do expect there to be well-behaved variational properties
when $\rho$ is sufficiently ``nice''. When $M$ is trivial this expectation manifests in a program devised by Daqing Wan in \cite{Wan3}.
Wan identifies three properties that one may expect for nice $\rho$:
\begin{enumerate}
	\item (monodromy stability) We say that $\rho$ is \emph{monodromy stable} if there exist $a,b \in \Q$ such that: for $n \gg 0$ and $\chi$ of finite order $p^n$ we have $\deg(L(\rho_\chi,s))=a+bp^n$.
	\item (slope uniformity) We say that $\chi$ is \emph{slope uniform} if for $\chi$ of finite order $p^n$, the slopes of $\NP_q L(\rho_\chi,s)$ are equidistributed in $[0,1]$ as $n$ tends to infinity.
	
	\item (slope stability) We say that $\chi$ is \emph{slope stable} if 
	there exist $\alpha_1,\dots,\alpha_d \in (0,1)\cap \Q$ and $k> 0$ such that
	for all $n>k$ and all $\chi$ of finite order $p^n$, the slopes of $\NP_q L(\rho_\chi,s)$
	are
	\[K\sqcup \bigsqcup_{i=1}^d \Bigg\{\frac{\alpha_i}{p^{n-k}},\frac{\alpha_i+1}{p^{n-k}}, \dots, \frac{\alpha_i+ p^{n-k}-1}{p^{n-k}}  \Bigg\},\]
	where $K$ is a set of $2g$ elements in $[0,1]$.
\end{enumerate}
These properties first appear in work of Davis-Wan-Xiao \cite{Davis}, who prove that all three are satisfied when
$X=\mathbb{A}_{\F_q}^1$ and $\rho$ is a particularly simple representation. 
The representations where we expect (at least some of) these properties to hold can be broken into two classes. The first
class are $\rho$ that \emph{come from geometry} (e.g. $\rho$ arises from the relative $p$-adic \'etale cohomology
of a fiber-wise ordinary smooth proper fibration $Y \to X$). In \cite{Wan3}, Wan conjectures that 
all three properties hold for $\rho$ coming from geometry. The only progress is due to the first author
in \cite{Kramer-Miller2}, where geometric representations are proven to satisfy a slightly weaker version
of monodromy stability. 
The second class of representations are those with \emph{strictly stable monodromy} (see \S \ref{d:striclystablefields}-\ref{d:strictlystablecurves} below). By work of Kosters and Zhu (see \cite{KZ}), we know that strictly stable towers over $\mathbb{A}_{\F_q}^1$ are slope uniform.

In this article we prove slope uniformity for strictly stable representations over any
ordinary curve. We also prove slope stability for strictly stable representations that
satisfy a stronger monodromy condition. To the best of our knowledge, these are the first
examples of these phenomena for curves other than $\mathbb{A}_{\F_q}^1$ and $\mathbb{G}_m$. 
Our proof uses the local-to-global methods developed in \cite{KramerMiller}, \cite{Kramer-Miller3}, and the previous article \cite{Kramer-Miller-Upton}.

\subsection{Main Results}
	\label{ss:towers of curves}
	
	\begin{definition}
		Let $F$ be a field. A \emph{$\Z_p$-tower} of fields $F_\infty/F$ is a sequence of finite Galois extensions
		\begin{equation}\label{eq:Finfty}
			\cdots	\supseteq	F_2	\supseteq	F_1	\supseteq	F_0	=	F,
		\end{equation}
		together with an identification $\Gal(F_n/F) \cong \Z/p^n\Z$ for all $n \geq 0$.
	\end{definition}
	
	Suppose that $F$ is a local field of characteristic $p$. Let $F_\infty/F$ be a $\Z_p$-tower of local fields. For each $n \geq 0$, let $v_n$ denote the highest ramification break (in upper numbering) of the finite extension $F_n/F$. By local class field theory, we have a lower bound on the growth of this sequence:
	\begin{equation}\label{eq:lower}
		v_{n+1}	\geq	pv_n.
	\end{equation}
		
	\begin{definition}\label{d:striclystablefields}
		We say that $F_\infty/F$ has \emph{strictly stable monodromy} if (\ref{eq:lower}) is an equality for all $n \gg 0$. Equivalently, $F_\infty/F$ has strictly stable monodromy if and only if there exists $\delta \in \Z[\tfrac{1}{p}]$ such that
		\begin{equation*}
			v_n	=	\delta p^{n-1}
		\end{equation*}
		for all $n \gg 0$. In this case, we also say that $F_\infty/F$ has \emph{$\delta$-stable} monodromy.
	\end{definition}
	
	\begin{definition}
		Let $X$ be a curve. A \emph{$\Z_p$-tower} of curves $X_\infty/X$ is a sequence of finite \'etale covers
		\begin{equation*}
			\cdots 	\to X_2	\to X_1 \to X_0 =  X.
		\end{equation*}
		together with an identification $\Gal(X_n/X) = \Z/p^n\Z$ for all $n \geq 0$.
	\end{definition}
	
	Let $X/\F_q$ be a smooth affine curve with smooth compactification $\overline{X}$. Let $S$ denote the complement of $X$ in $\overline{X}$. For each $P \in S$, let $F_P$ denote the local field of $X$ at $P$. If $X_\infty/X$ is a $\Z_p$-tower of curves, then by restriction we obtain a $\Z_p$-tower of local fields $F_{P,\infty}/F_P$.
		
	\begin{definition}\label{d:strictlystablecurves}
		We say that $X_\infty/X$ has \emph{strictly stable monodromy} if $F_{P,\infty}/F_P$ has strictly stable monodromy for all $P \in S$. Given a tuple $\bm\delta = (\delta_P)_{P \in S}$ of positive numbers in $\Z[\tfrac{1}{p}]$, we say that $F_\infty/F$ has \emph{$\bm\delta$-stable} monodromy if $F_{P,\infty}/F_P$ has $\delta_P$-stable monodromy for each $P \in S$.
	\end{definition}
	
	Let $X_\infty/X$ be a $\Z_p$-tower of curves, corresponding to a continuous and surjective map $\rho:\pi_1(X) \to \Z_p$. If $\chi \in \Hom(\Z_p,\C_p^\times)$ is a continuous character, then we may form the composite
	\begin{equation*}
		\rho_\chi:\pi_1(X)	\to	\Z_p	\xrightarrow{\chi}	\C_p^\times.
	\end{equation*}
	Thus we obtain a family of $p$-adic characters of $\pi_1(X)$ parameterized by the $p$-adic space $\Hom(\Z_p,\C_p^\times)$. In this light, it is convenient to view $X_\infty/X$ as a family of characters of $\pi_1(X)$. 
	We can now state our first theorem:

	\begin{theorem}
		\label{theorem: intro stable implies uniformity}
		Assume that $\overline{X}$ is ordinary. If $X_\infty/X$ has strictly stable monodromy, then $X_\infty/X$ is slope uniform.
	\end{theorem}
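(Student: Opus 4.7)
The plan is to combine the local-to-global machinery developed in \cite{KramerMiller, Kramer-Miller3, Kramer-Miller-Upton} with a fine analysis of strictly stable local $\Z_p$-towers at each puncture. Roughly, I would decompose the $p$-adic cohomology computing $L(\rho_{\chi_n}, s)$ into local contributions at each $P \in S$ plus a small global contribution coming from $\overline{X}$; the ordinarity of $\overline X$ makes the latter contribute only $O(1)$ slopes, all lying in $\{0,1\}$, and strictly stable monodromy forces each local piece to contribute slopes that are asymptotically equidistributed in $[0,1]$. For orientation, fix a character $\chi$ of exact order $p^n$ and set $d_n := \deg L(\rho_{\chi_n}, s)$. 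The Grothendieck--Ogg--Shafarevich formula together with the $\bm\delta$-stable assumption gives $d_n = \sum_{P \in S}\delta_P p^{n-1} + O(1)$ for $n \gg 0$, and Poincar\'e duality forces the slope sequence to be symmetric about $1/2$, so $\NP_q L(\rho_{\chi_n}, s)$ has slopes in $[0,1]$ and endpoint $(d_n, d_n/2)$.

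First, I would invoke the local-to-global decomposition from \cite{Kramer-Miller-Upton}: the rigid cohomology $H^1_{\mathrm{rig},c}(X, \mathcal{L}_{\rho_{\chi_n}})$ decomposes, up to bounded error, as a direct sum $M_{\mathrm{gl}}^{(n)} \oplus \bigoplus_{P \in S} M_P^{(n)}$, where each $M_P^{(n)}$ is a local $F$-isocrystal attached to the $n$-th layer of $F_{P,\infty}/F_P$ and $M_{\mathrm{gl}}^{(n)}$ is a global piece whose Frobenius structure is essentially that of $H^1_{\mathrm{rig}}(\overline{X})$. The role of the ordinarity hypothesis on $\overline X$ is precisely to force $M_{\mathrm{gl}}^{(n)}$ to be ordinary of rank bounded by $2g$ independent of $n$, so its Newton polygon consists of $\leq g$ slopes at $0$ and $\leq g$ slopes at $1$. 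Consequently, up to $O(1)$ slopes, $\NP_q L(\rho_{\chi_n}, s)$ is the concatenation of the local Newton polygons of the $M_P^{(n)}$.

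The core local input is to show that each $M_P^{(n)}$ has rank $\delta_P p^{n-1} + O(1)$ and Newton polygon whose slope measure converges weakly to the uniform measure on $[0,1]$ as $n \to \infty$. Under $\delta_P$-stable monodromy the ramification filtration of $F_{P,\infty}/F_P$ is extremely rigid for $n \gg 0$ --- the upper-numbering breaks form a geometric progression of ratio $p$ --- and an explicit calculation in the style of the local arguments of \cite{Kramer-Miller-Upton} should produce a Frobenius matrix for $M_P^{(n)}$ whose Newton polygon is an approximate ``staircase'' with slopes equidistributed in $[0,1]$. Summing over $P \in S$ and absorbing the $O(1)$ contribution from $M_{\mathrm{gl}}^{(n)}$ then yields slope uniformity for $\NP_q L(\rho_{\chi_n}, s)$.

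The hardest step will be obtaining an honest upper bound on the Newton polygon, not just the standard ``Newton above Hodge'' lower bound. The local-to-global framework cleanly produces a Hodge polygon $\HP_n$ whose slopes are equidistributed, but $\NP \geq \HP$ with matching endpoints does not by itself force equidistribution of $\NP$: the Newton polygon could in principle lie much closer to the chord from $(0,0)$ to $(d_n, d_n/2)$. Removing this loss --- proving that $\NP_q L(\rho_{\chi_n}, s)$ agrees with $\HP_n$ up to $O(1)$ slopes --- is where ordinarity of $\overline X$ (ruling out extra unit-root contributions in the global piece) and the rigidity of the Frobenius structure on strictly stable local towers both enter crucially.
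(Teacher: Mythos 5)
Your overall architecture (localize at each $P\in S$, control the global contribution by ordinarity of $\overline X$, deduce equidistribution from the local pieces) matches the paper's strategy in outline, but there is a genuine gap at exactly the point you flag as ``the hardest step,'' and the mechanism you propose to close it would not work. You suggest that under $\delta_P$-stable monodromy an ``explicit calculation'' should produce a Frobenius matrix for each local piece whose Newton polygon is an equidistributed staircase agreeing with the Hodge bound up to $O(1)$. This is false in general: even for $X=\A^1_{\F_q}$ and characters of order $p$, the local Newton polygon equals its Hodge bound only under congruence conditions such as $p\equiv 1\pmod{\delta_P}$ (this is precisely the content of Theorem \ref{theorem: intro stable with congruence implies stability} and of \cite[Theorem 1.3]{Kramer-Miller-Upton}); generically the Newton polygon lies strictly above the Hodge polygon, and no direct diagonalization of the Frobenius is available. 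So the core local input of your argument is not merely unproven --- as stated it is too strong to be true.

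What actually closes the gap in the paper is a weaker but sufficient statement: the Newton and Hodge polygons \emph{touch periodically}, sharing vertices over the intervals $[g-1+nd,\,g-1+|S|+nd]$ with $d$ a constant independent of $m_\chi$ (Theorem \ref{t:globaltouching}). A convex polygon trapped above $\HP(X_\infty/X)$ and pinned to it at horizontal intervals of bounded length $d$ must converge to it after renormalizing by $e_\chi$, so equidistribution of the renormalized Hodge slopes transfers to the Newton slopes; this is the entire proof given in \S\ref{ss:slopeuniformity}. The periodic touching is obtained locally from the Kosters--Zhu generic characteristic series (Theorem \ref{t:KZ} and Corollary \ref{c:localperiodicity}), which reduces vertex-sharing for all $\chi$ of large order to a single character of order $p^{m_0+1}$, and is then globalized by the vertex-sharing criterion \cite[Theorem 6.14]{Kramer-Miller-Upton} --- a statement about shared vertices of polygons, not a literal direct-sum decomposition of the cohomology as in your first step. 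Your proposal contains neither the periodic-touching statement nor any substitute for it, so the upper bound on the Newton polygon, and hence equidistribution, is not established.
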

	
	\subsubsection{Results on Newton Polygons}
	
		We now state our results regarding the interaction of Newton and Hodge polygons of characters. Let $X_\infty/X$ be a $\Z_p$-tower of curves with $\bm\delta$-stable monodromy.
		
		\begin{definition}
			The \emph{Hodge polygon} $\HP (X_\infty/X)$ is the convex polygon with slope set
			\begin{equation*}
				\{ \underbrace{0,...,0}_{g-1+|S|}\} \sqcup \bigsqcup_{P \in S} \left\{	\frac{p-1}{\delta_P},\frac{2(p-1)}{\delta_P},...\right\}.
			\end{equation*}
		\end{definition}
		
		Let $\chi:\Z_p \to \C_p^\times$ be a finite non-trivial character of order $p^{m_\chi}$. We attach to $\chi$ the parameter
		\begin{equation*}
			\pi_\chi = \chi(1) - 1,
		\end{equation*}
		and regard $\chi$ as taking values in the ring $\Z_p[\pi_\chi]$. Thus, the Artin $L$-function $L(\rho_\chi,s)$ is a polynomial with coefficients in $\Z_q[\pi_\chi]$. In this article, it will be convenient for us to adopt the following normalization convention for Newton polgyons:
		
		\begin{definition}
			Let $\pi_{q,\chi} = \pi_\chi^{v_p(q)}$. The \emph{normalized Newton polygon} of $\rho_\chi$ is
			\begin{equation*}
				\NP(\rho_\chi)	=	\NP_{\pi_{q,\chi}} L(\rho_\chi,s).
			\end{equation*}
		\end{definition}
		
		Let $e_\chi = v_{\pi_\chi}(p) = p^{m_\chi-1}(p-1)$ denote the absolute ramification index of $\Z_p[\pi_\chi]$. Let $m_0$ be the smallest non-negative integer such that for each $P \in S$ the highest ramification break (in upper numbering) of $F_{P,m_0+1}/F_P$ is $p^{m_0} \delta_P$.
	From \eqref{eq:lower} we see that the highest ramification break of $F_{P,m}/F_P$ is $p^{m-1} \delta_P$ for all $m> m_0$.		
		By work of the first author in \cite{Kramer-Miller3}, the Hodge polygon $\HP(X_\infty/X)$ provides a common lower bound for $\NP(\rho_\chi)$ whenever $m_\chi > m_0$:
		\begin{equation} \label{eq: Newton over Hodge finite characters}
			\NP^{< e_\chi}(\rho_\chi) \succeq \HP^{< e_\chi}(X_\infty/X).
		\end{equation}
		If $\overline{X}$ is ordinary, then these polygons have the same terminal point. 
		The next theorem greatly improves the estimate \eqref{eq: Newton over Hodge finite characters} when $\overline{X}$ is ordinary. It shows that the Newton and Hodge polygons touch periodically:

		
		\begin{theorem}\label{t:globaltouching}
			Assume that $\overline{X}$ is ordinary. Let $d = p^{m_0}\sum_{P \in S} \delta_P$. For all $m_\chi > m_0$ and all $1 \leq n < p^{m_\chi-m_0-1}$, the polygons $\NP^{< e_\chi}(\rho_\chi)$ and $\HP(X_\infty/X)$ agree on the interval
			\begin{equation}\label{eq:globaltouching}
				[g-1+nd,g-1+|S|+nd].
			\end{equation}
		\end{theorem}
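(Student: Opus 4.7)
The plan is to exploit a local-to-global decomposition of $L(\rho_\chi,s)$, building on the techniques of \cite{KramerMiller,Kramer-Miller3,Kramer-Miller-Upton}, combined with a direct combinatorial analysis of $\HP(X_\infty/X)$. The crucial observation is that the interval $[g-1+nd,\,g-1+|S|+nd]$ is precisely the plateau on which the Hodge polygon has constant slope $v_0 := np^{m_0}(p-1)$: for each puncture $P\in S$, the local Hodge slope $k(p-1)/\delta_P$ equals $v_0$ exactly when $k=np^{m_0}\delta_P$, contributing one slope per puncture, and a quick count shows that there are exactly $g-1+nd$ sorted Hodge slopes strictly less than $v_0$ (namely the $g-1+|S|$ initial zeros together with the $\sum_P (np^{m_0}\delta_P-1)=nd-|S|$ non-zero slopes below $v_0$). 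Hence $\HP(X_\infty/X)$ is linear of slope $v_0$ on the target interval, and by convexity together with the a priori bound $\NP\succeq\HP$ from \eqref{eq: Newton over Hodge finite characters}, it suffices to produce two Hodge--Newton touching points, one at each endpoint of the interval.

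The main input is a local touching statement, which should be drawn from the previous article \cite{Kramer-Miller-Upton}: at each puncture $P$, the local Newton polygon attached to the restriction of $\rho_\chi$ to $F_P$ meets the local Hodge polygon $\HP_P=\{k(p-1)/\delta_P\}_{k\geq 1}$ at the vertex with $x$-coordinate $np^{m_0}\delta_P$, for each $n$ in the stable range $1\leq n<p^{m_\chi-m_0-1}$. To pass from local to global, I would use the cohomological formalism of \cite{KramerMiller,Kramer-Miller3}, realizing $L(\rho_\chi,s)$ up to trivial factors as the characteristic polynomial of Frobenius on the rigid cohomology of $X$ with coefficients in $\rho_\chi$, and decompose this Frobenius via the excision sequence relating $\overline{X}$ and $X$. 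Because $\overline{X}$ is ordinary, its compact part contributes only $g$ zero slopes (with $g$ matching unit-root slopes forced by the functional equation), matching the leading portion of $\HP(X_\infty/X)$; the residual contributions organize as a sorted merge of the local pieces, together with $|S|-1$ further zero slopes from the tame boundary. Summing the local touching data across $P$, this produces the desired Hodge--Newton touchings at both $x=g-1+nd$ and $x=g-1+|S|+nd$, completing the argument by convexity.

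The main obstacle is the local touching statement itself: a priori the local Newton polygon could lie strictly above $\HP_P$ at the prescribed vertex, even under $\delta_P$-stability. Proving equality at $x=np^{m_0}\delta_P$ requires the refined local analysis of \cite{Kramer-Miller-Upton}, in which one uses explicit generators of the stable tower together with a careful study of the associated overconvergent $F$-isocrystal to show that the local Newton polygon achieves its Hodge lower bound at a periodic sequence of vertices. A secondary difficulty is making the global ``sorted merge'' rigorous, since $L(\rho_\chi,s)$ does not literally factor over $S$; this is handled via the slope filtration on rigid cohomology and the fact that ordinarity of $\overline{X}$ forces the global $F$-isocrystal to split compatibly with the local-to-global decomposition, at least in the slope ranges in which the polygons must touch.
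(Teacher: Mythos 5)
Your overall strategy is the one the paper uses: reduce to the observation that $\HP(X_\infty/X)$ is linear of slope $ne_0$ (with $e_0=p^{m_0}(p-1)$) on the interval $[g-1+nd,\,g-1+|S|+nd]$, then combine a local Newton--Hodge touching statement at each $P\in S$ with the local-to-global vertex-sharing theorem of the prequel (\cite[Theorem 6.14]{Kramer-Miller-Upton}) and convexity. Your count of the plateau is correct, and your reduction to producing touchings at the two endpoints is sound.

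However, the local input as you state it is one vertex short, and this makes the left endpoint fail. You assert local touching only at the single vertex $x=np^{m_0}\delta_P$. Feeding this into the local-to-global theorem (or into your ``sorted merge'') produces the global touching at $x=g-1+|S|+nd$, but the touching at the left endpoint $x=g-1+nd$ requires that each local Newton polygon also meet $\HP(\delta_P)$ at $x=np^{m_0}\delta_P-1$: one must apply the vertex-sharing theorem at a second cut $r=ne_0-\varepsilon$ with $\varepsilon<\min_P(p-1)/\delta_P$, where the local Hodge truncations terminate at $np^{m_0}\delta_P-1$. Knowing only the right endpoints for all $n$ does not recover the left endpoints by convexity, since $\HP$ is not linear between consecutive right endpoints. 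The correct local statement---and the one the paper actually invokes (Corollary \ref{c:localperiodicity})---is that $\NP(\rho_{\chi,P}^\ext)$ and $\HP(\delta_P)$ agree on the entire segment $[np^{m_0}\delta_P-1,\,np^{m_0}\delta_P]$, i.e.\ at both consecutive vertices; this is \cite[Theorem 7.10]{Kramer-Miller-Upton} for a character of order $p^{m_0+1}$, extended to all finite $\chi$ with $m_\chi>m_0$ (and to $\chi_0$) by the Kosters--Zhu genericity argument of Theorem \ref{t:KZ}, a reduction your sketch omits. Two smaller points: the constraint $n<p^{m_\chi-m_0-1}$ arises from the requirement $r\leq e_\chi$ in the global theorem, not from the local stability range; and the cohomological ``excision/slope filtration'' machinery you describe is unnecessary here, since the global step is exactly the black-box statement of \cite[Theorem 6.14]{Kramer-Miller-Upton}.
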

		
		Theorem \ref{theorem: intro stable implies uniformity} is then an easy consequence of Theorem \ref{t:globaltouching}. If we impose additional congruence conditions on the $\delta_P$, we can completely determine the Newton polygon of each finite character:

		\begin{theorem}
		\label{theorem: intro stable with congruence implies stability}
			Let $X_\infty/X$ be a $\Z_p$-tower of curves with $\bm\delta$-stable monodromy. Then $\NP^{< e_\chi} (\rho_\chi) = \HP^{< e_\chi}(X_\infty/X)$ for every finite $\chi$ if and only if:
			\begin{enumerate}
				\item	$\overline{X}$ is ordinary.
				\item	$m_0 = 0$, so that $\delta_P \in \Z$ for all $P \in S$.
				\item	$p \equiv 1 \pmod{\delta_P}$ for all $P \in S$.
			\end{enumerate}
		\end{theorem}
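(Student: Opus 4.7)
The plan is to prove each direction separately.

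For the \emph{sufficiency} direction, assume conditions (1)--(3). Since $m_0 = 0$, Theorem \ref{t:globaltouching} applies to every finite nontrivial $\chi$ and yields the equality $\NP^{<e_\chi}(\rho_\chi) = \HP^{<e_\chi}(X_\infty/X)$ on each of the intervals $[g-1+nd, g-1+|S|+nd]$ for $1 \leq n < p^{m_\chi-1}$, where $d = \sum_{P \in S}\delta_P$. The ordinarity hypothesis pins down the initial unit-root segment $[0, g-1+|S|]$, on which both polygons are identically zero, using the slope-zero results of \cite{Kramer-Miller3}. What remains is to match the two polygons on the intermediate ``gap'' intervals $[g-1+|S|+nd, g-1+(n+1)d]$ of horizontal length $d-|S|$ lying between consecutive touching intervals, together with the terminal segment up to $e_\chi$.

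The main obstacle is this gap analysis. On each gap the two polygons share endpoints and $\NP \succeq \HP$, but $\HP$ is non-linear on the gap, so endpoint coincidence does not by itself force slope coincidence. To close the gaps I would use the local-to-global framework developed in \cite{KramerMiller, Kramer-Miller-Upton} to decompose the Newton polygon of $L(\rho_\chi,s)$ into local contributions at each $P \in S$ concatenated with a unit-root piece from $\overline{X}$. Under conditions (2) and (3)---$\delta_P \in \Z$ and $\delta_P \mid (p-1)$---the local Artin--Schreier--Witt equation describing $\rho_\chi$ at $P$ admits a particularly clean normal form, and the corresponding local Newton polygon can be computed in closed form and shown to contribute precisely the slopes $\{k(p-1)/\delta_P : k \geq 1\}$ of the $P$-component of $\HP(X_\infty/X)$. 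Assembling these local contributions then fills every gap and identifies $\NP^{<e_\chi}(\rho_\chi)$ with $\HP^{<e_\chi}(X_\infty/X)$.

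For the \emph{necessity} direction, one shows each of (1), (2), (3) is forced by exhibiting, for every violation, a finite $\chi$ with $\NP^{<e_\chi}(\rho_\chi) \neq \HP^{<e_\chi}(X_\infty/X)$. If $\overline{X}$ is not ordinary, the unit-root dimension of $L(\rho_\chi,s)$ is strictly less than $g-1+|S|$, so the initial unit-root segments of $\NP$ and $\HP$ disagree already for every nontrivial $\chi$. If $m_0 > 0$, then for $\chi$ with $1 \leq m_\chi \leq m_0$ the local ramification of $\rho_\chi$ has not yet stabilized; a Swan-conductor comparison via the Grothendieck--Ogg--Shafarevich formula shows that the lengths of $\NP^{<e_\chi}(\rho_\chi)$ and $\HP^{<e_\chi}(X_\infty/X)$ fail to match. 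If some $\delta_P$ does not divide $p-1$, then the first positive vertex of $\HP$ lies at a height outside the $\pi_{q,\chi}$-adic valuation lattice available to the coefficients of $L(\rho_\chi,s)$ for $\chi$ of order $p$, so that specific vertex cannot appear in $\NP$, yielding the required discrepancy.
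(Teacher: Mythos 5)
Your sufficiency argument has the right skeleton---reduce to a statement at each $P\in S$ via the local-to-global machinery of \cite{Kramer-Miller-Upton} and then settle the local case---and this is indeed how the paper proceeds: by \cite[Theorem 6.14]{Kramer-Miller-Upton} everything reduces to showing $\NP^{<e_\chi}(\rho_{\chi,P}^{\ext})=\HP^{<e_\chi}(\delta_P)$ for each $P$. The genuine gap is in how you discharge that local statement. You assert that under $\delta_P\in\Z$ and $p\equiv 1\pmod{\delta_P}$ the local Newton polygon ``can be computed in closed form'' for \emph{every} finite $\chi$. That is only true for $\chi$ of order $p$, where it is the classical Adolphson--Sperber/Stickelberger-type computation invoked as \cite[Theorem 1.3]{Kramer-Miller-Upton}. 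For $m_\chi\geq 2$ the relevant splitting function is the infinite product $\prod_j E_{F_j}^j$ and no direct closed-form evaluation of the characteristic series is available; the indispensable input is the Kosters--Zhu specialization argument (Theorem \ref{t:KZ}), which shows that coincidence of terminal points at a given truncation for \emph{one} character with $m_\chi>m_0$ propagates to \emph{all} such characters, so that the order-$p$ computation suffices. Without this reduction (or a substitute for it) your argument is incomplete precisely at its load-bearing step. A smaller stylistic point: once the full local equality is in hand, the gluing theorem transfers it directly, so the preliminary ``apply Theorem \ref{t:globaltouching} and then fill the gaps'' framing is unnecessary.

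On necessity, the paper simply cites \cite[Remark 1.4]{Kramer-Miller-Upton}, so your sketches are extra content; they are in the right spirit but two need repair. For $m_0>0$, the Grothendieck--Ogg--Shafarevich degree count must be compared with the \emph{length} of $\NP^{<e_\chi}(\rho_\chi)$, which requires controlling the multiplicity of the slope $e_\chi$ (this is where ordinarity and the functional equation enter), not just $\deg L(\rho_\chi,s)$. For condition (3), the denominator obstruction should be run on the slopes of the \emph{local} polygon for a character of order $p$ (the first positive Hodge slope $(p-1)/\delta_P$ cannot be a Newton slope over $\Z_q[\zeta_p]$ unless $\delta_P\mid p-1$), rather than on vertex heights of the global Hodge polygon, whose segments interleave contributions from all points of $S$ and whose heights are therefore not controlled by a single $\delta_P$.
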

		
		As a consequence of Theorem \ref{theorem: intro stable with congruence implies stability}, we deduce the full form of \cite[Theorem 1.3]{Kramer-Miller-Upton}. 
		This gives a necessary and sufficient condition for the Newton polygon of a single character to coincide with its Hodge bound. Theorem \ref{theorem: intro stable with congruence implies stability} also provides our first examples of slope stability over curves of higher genus:
		
		\begin{corollary}
			Every $\Z_p$-tower of curves $X_\infty/X$ satisfying the conditions of Theorem \ref{theorem: intro stable with congruence implies stability} is slope stable.
		\end{corollary}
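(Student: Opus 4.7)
The plan is to deduce slope stability directly from Theorem~\ref{theorem: intro stable with congruence implies stability}, which under the stated hypotheses gives the equality $\NP^{<e_\chi}(\rho_\chi) = \HP^{<e_\chi}(X_\infty/X)$ for every finite character $\chi$ of order $p^{m_\chi}$, and then to verify by an explicit combinatorial calculation that the resulting slope multiset matches the slope stability template.

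First, I would rescale from the $\pi_{q,\chi}$-normalization to the $q$-normalization by dividing by $e_\chi = p^{m_\chi - 1}(p - 1)$. The resulting slopes of $\NP_q L(\rho_\chi, s)$ consist of the $g - 1 + |S|$ zero slopes contributed by the initial block of the Hodge polygon, together with rationals of the form $k/(\delta_P p^{m_\chi - 1})$ for each $P \in S$ and each $k = 1, 2, \dots$ up to a truncation near slope $1$; the multiplicities of the boundary slopes $0$ and $1$ are pinned down by the functional equation $s \leftrightarrow 1 - s$ of the $L$-function on an affine curve.

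Next, I would exploit the congruence $p \equiv 1 \pmod{\delta_P}$, which together with $m_0 = 0$ is equivalent to $\gcd(p, \delta_P) = 1$, to decompose each local contribution into arithmetic progressions. Every $k \in \{1, \dots, \delta_P p^{m_\chi - 1} - 1\}$ admits a unique expression $k = r + q \delta_P$ with $r \in \{0, \dots, \delta_P - 1\}$ and $q \in \{0, \dots, p^{m_\chi - 1} - 1\}$, and the corresponding slope becomes
\[
    \frac{k}{\delta_P p^{m_\chi - 1}} = \frac{r/\delta_P + q}{p^{m_\chi - 1}}.
\]
For each $P \in S$ and each nonzero residue $r \in \{1, \dots, \delta_P - 1\}$, letting $q$ range over its full index set produces a complete arithmetic progression $\{(\alpha_{P,r} + j)/p^{m_\chi - 1}\}_{j = 0}^{p^{m_\chi - 1} - 1}$ with $\alpha_{P,r} = r/\delta_P \in (0, 1) \cap \Q$, matching the slope stability template with exponent parameter $k = 1$.

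The final step is to organize the leftover slopes — the $r = 0$ slopes of the form $q/p^{m_\chi - 1}$ contributed by each $P \in S$, together with the boundary $0$'s and $1$'s coming from the Hodge block and its dual under $s \leftrightarrow 1 - s$ — into an exceptional set $K$ of exactly $2g$ elements and (possibly) further arithmetic progressions with $\alpha \in (0, 1) \cap \Q$. This is carried out by combining the integer-over-$p^{m_\chi - 1}$ slopes across different points of $S$ and pairing boundary contributions via the functional equation. I expect the main obstacle to be this last bookkeeping: verifying uniformly in $m_\chi$ that the residual count is precisely $2g$ plus an integer multiple of $p^{m_\chi - k}$, which relies on the rigidity provided by the congruences $\delta_P \mid (p - 1)$ and on the global Euler characteristic of $X$ assembling into the correct shape.
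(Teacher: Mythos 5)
The paper states this corollary without proof, as an immediate consequence of Theorem \ref{theorem: intro stable with congruence implies stability}, and your argument is exactly the intended one: equate $\NP^{<e_\chi}(\rho_\chi)$ with the Hodge polygon, renormalize by $e_\chi$, and split each local slope $k/(\delta_P p^{m_\chi-1})$ via the division $k = r + q\delta_P$ into $\delta_P-1$ full progressions with $\alpha_{P,r}=r/\delta_P$, uniformly in $m_\chi$. Two points deserve correction. First, the parenthetical claim that $p\equiv 1\pmod{\delta_P}$ together with $m_0=0$ is ``equivalent to $\gcd(p,\delta_P)=1$'' is false ($\delta_P=3$, $p=5$); fortunately your decomposition only uses the division algorithm, and the congruence enters solely through Theorem \ref{theorem: intro stable with congruence implies stability} itself, so nothing downstream depends on this. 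Second, the final bookkeeping does not resolve the way you predict: the residual multiset consists of $|S|$ copies of $\{q/p^{m_\chi-1}: 1\le q\le p^{m_\chi-1}-1\}$ together with $g-1+|S|$ zeros and (by purity/duality) $g-1+|S|$ ones, for a total of $|S|p^{m_\chi-1}+|S|+2g-2$ slopes. This is $2g$ plus a multiple of $p^{m_\chi-1}$ only when $|S|=2$, and each block $\{q/p^{m_\chi-1}\}_{q\ge 1}$ can only be completed to a progression with $\alpha\in\{0,1\}$, never with $\alpha\in(0,1)$ as the paper's definition literally demands. The same defect already occurs in the Davis--Wan--Xiao prototype ($g=0$, $|S|=1$), so this is an imprecision in the definition of slope stability rather than a gap in your argument; under any reasonable reading (allowing $\alpha_i\in[0,1)$, or absorbing the integral-slope blocks into the exceptional set) your proof is complete, but you should say explicitly that these $|S|$ integral-slope blocks plus the $2(g-1+|S|)$ boundary slopes constitute the exceptional part, rather than asserting that the count collapses to exactly $2g$.
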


	\subsubsection{Towers Twisted by Tame Characters}
	
		Let us continue to assume that $X_\infty/X$ is a $\Z_p$-tower with $\bm\delta$-stable monodromy, corresponding to a continuous and surjective map $\rho:\pi_1(X) \to \Z_p$. Let $\psi:\pi_1(X) \to \Z_p^\times$ be a tame character. We will write $\psi \otimes \rho$ or
		$\psi \otimes X_\infty/X$ for the ``family'' of characters $\psi \otimes \rho_\chi$
		parameterized by the space $\Hom(\Z_p,\C_p^\times)$. 
	By combining Theorem \ref{t:globaltouching} and Theorem \ref{theorem: intro stable with congruence implies stability} with the results of \cite{Kramer-Miller3}, we are able to prove
	stability and uniformity results for this twisted family.
			
		Let $f:X^\tame \to X$ be the finite cyclic covering corresponding to $\ker(\psi)$. For each $P \in S$, let $s_P$ denote the ramification index of $f$ at $P$. If $Q$ is a point of $\overline{X}^\tame$ lying above $P$, then we define $\delta_Q^\tame = s_P \delta_P$. Our results are as follows:
	
	\begin{theorem}\label{t:twisted uniformity}
		Assume $\overline{X}^\tame$ is ordinary. Then $\psi \otimes X_\infty/X$ is slope uniform.
	\end{theorem}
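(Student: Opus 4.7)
The plan is to reduce the twisted statement to the untwisted statement after pulling back along $f$. Form $X_\infty^\tame := X_\infty \times_X X^\tame$; since both $f$ and $X_\infty/X$ are étale, this is a $\Z_p$-tower of curves over $X^\tame$, corresponding to the pulled-back character $f^*\rho : \pi_1(X^\tame) \to \Z_p$. At any $Q \in \overline{X}^\tame$ lying over $P \in S$, uniformisers at $P$ and $Q$ are related by $\pi_P = u \pi_Q^{s_P}$, so upper-numbering ramification breaks rescale by $s_P$. Hence $X_\infty^\tame/X^\tame$ has $\bm\delta^\tame$-stable monodromy in the sense of Definition \ref{d:strictlystablecurves}, matching the statement's definition of $\delta_Q^\tame$.

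Since $\overline{X}^\tame$ is ordinary by hypothesis, Theorem \ref{t:globaltouching} applies to $X_\infty^\tame/X^\tame$: for $\chi$ of sufficiently large order, $\NP(f^*\rho_\chi)$ agrees with $\HP(X_\infty^\tame/X^\tame)$ on the periodic family of long intervals provided by that theorem. To transfer this to $\psi \otimes \rho_\chi$, we use the standard product decomposition of Artin $L$-functions for the cyclic Galois cover $f$:
\[
L(f^*\rho_\chi, X^\tame, s) = \prod_{\eta} L(\eta \otimes \rho_\chi, X, s),
\]
where $\eta$ ranges over the characters of $\Gal(X^\tame/X) = \langle \psi \rangle$. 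Consequently $\NP(f^*\rho_\chi)$ is the concatenation of the polygons $\NP(\eta \otimes \rho_\chi)$. The results of \cite{Kramer-Miller3} furnish an individual Hodge lower bound $\HP_\eta$ for each factor, built from the local data of $\eta \otimes \rho$ at each $P \in S$; the key local combinatorial identity, reflecting the decomposition of the local Hodge filtration at each $Q \in \overline{X}^\tame$ into $\eta$-isotypic components for $\Gal(X^\tame/X)$, is that $\bigsqcup_\eta \HP_\eta = \HP(X_\infty^\tame/X^\tame)$.

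With these ingredients assembled, tightness of the concatenated inequality $\bigsqcup_\eta \NP(\eta\otimes\rho_\chi) \succeq \bigsqcup_\eta \HP_\eta$ on the touching intervals forces each individual inequality $\NP(\eta\otimes\rho_\chi) \succeq \HP_\eta$ to be tight on the corresponding subintervals. Thus each twisted $\NP(\eta \otimes \rho_\chi)$ touches $\HP_\eta$ periodically, and repeating the equidistribution argument used to derive Theorem \ref{theorem: intro stable implies uniformity} from Theorem \ref{t:globaltouching}—now factor by factor—yields slope uniformity for $\eta \otimes X_\infty/X$, in particular for $\eta = \psi$. The main technical obstacle is the combinatorial bookkeeping required to establish the decomposition $\bigsqcup_\eta \HP_\eta = \HP(X_\infty^\tame/X^\tame)$ and, more delicately, to verify that the touching intervals on $X^\tame$ are aligned with the block structure of this decomposition in such a way that the tightness argument actually descends to each individual factor, rather than merely to their concatenated sum; the natural source of this alignment is the fact that the slopes of $\HP_\eta$ lie in prescribed arithmetic progressions mod $s_P$ at each $P$, separating the contributions of distinct $\eta$.
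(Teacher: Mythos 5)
Your proposal is correct and follows essentially the same route as the paper: pull back along $f$ to get the $\bm\delta^\tame$-stable tower $X_\infty^\tame/X^\tame$, decompose $\NP(\chi^\tame)$ and $\HP(X_\infty^\tame/X^\tame)$ as concatenations over the characters of $\Gal(X^\tame/X)$ (the paper phrases this via $\mathrm{Ind}^{X^\tame}_X$, you via the product formula for Artin $L$-functions, which is the same decomposition), and use the Newton-over-Hodge bound of \cite{Kramer-Miller3} for each factor to force the periodic touching of Theorem \ref{t:globaltouching} to descend to each twisted summand. The "alignment" issue you flag is handled in the paper exactly as you suggest, by comparing endpoints of the truncated polygons $\NP^{<r}$ factor by factor.
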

	
	\begin{theorem}\label{t:twisted stability}
		Assume that $\overline{X}^\tame$ is ordinary and that $p \equiv 1 \mod \delta^\tame_Q$ for $Q$ lying above $P \in S$.
		Then $\psi \otimes X_\infty/X$ is slope stable.
	\end{theorem}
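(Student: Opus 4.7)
The plan is to reduce the claim to Theorem \ref{theorem: intro stable with congruence implies stability} applied to the pulled-back tower on $X^\tame$, and then redistribute the resulting Newton-equals-Hodge identity among the tame twists of $\rho$.

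First, I would pull the tower $X_\infty/X$ back along $f:X^\tame \to X$ to obtain a $\Z_p$-tower $X_\infty^\tame/X^\tame$. At each point $Q \in \overline{X}^\tame$ above $P \in S$ the ramification index of $f$ is $s_P$, and since $s_P$ is coprime to $p$, Herbrand's theorem implies that the upper numbering ramification breaks of $F_{Q,\infty}/F_Q$ are exactly $s_P$ times those of $F_{P,\infty}/F_P$. Hence $X_\infty^\tame/X^\tame$ has $\bm\delta^\tame$-stable monodromy with $\delta_Q^\tame = s_P\delta_P \in \Z$ (integrality being forced by $\gcd(s_P,p)=1$ together with the congruence hypothesis), and $p \equiv 1 \pmod{\delta_Q^\tame}$ by assumption. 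Combined with the ordinariness of $\overline{X}^\tame$, this shows that $X_\infty^\tame/X^\tame$ satisfies all three conditions of Theorem \ref{theorem: intro stable with congruence implies stability}, giving
\[
\NP^{<e_\chi}(\rho_\chi^\tame) = \HP^{<e_\chi}(X_\infty^\tame/X^\tame)
\]
for every non-trivial finite character $\chi$, where $\rho_\chi^\tame$ denotes the pullback of $\rho_\chi$ to $\pi_1(X^\tame)$.

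Next, I would invoke the Artin formalism for the cyclic cover $f$ to factor
\[
L(\rho_\chi^\tame,s) = \prod_{\psi'\in\widehat{\Gal(X^\tame/X)}} L(\rho_\chi\otimes\psi',s),
\]
which induces a corresponding concatenation of normalized Newton polygons. The parallel decomposition of Hodge polygons, together with the individual Newton-over-Hodge bound
\[
\NP^{<e_\chi}(\rho_\chi\otimes\psi') \succeq \HP^{<e_\chi}(\psi'\otimes X_\infty/X)
\]
for each tame character $\psi'$ and each $\chi$ with $m_\chi > m_0$, is supplied by the results of \cite{Kramer-Miller3}. Since the sum of the individual Newton polygons equals the sum of the individual Hodge polygons (by the equality above), and each summand is bounded below by its Hodge counterpart, each individual inequality must be an equality:
\[
\NP^{<e_\chi}(\rho_\chi\otimes\psi') = \HP^{<e_\chi}(\psi'\otimes X_\infty/X).
\]
Specializing to $\psi'=\psi$ and reading off the explicit slope set of $\HP(\psi\otimes X_\infty/X)$, the periodic pattern required by slope stability emerges from the congruence $p \equiv 1 \pmod{\delta_Q^\tame}$ in the same way that the corollary following Theorem \ref{theorem: intro stable with congruence implies stability} deduces slope stability from $\NP = \HP$.

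The main obstacle is securing the clean compatibility between the ``upstairs'' Hodge polygon $\HP(X_\infty^\tame/X^\tame)$ and the concatenation $\bigsqcup_{\psi'}\HP(\psi'\otimes X_\infty/X)$ of the ``downstairs'' twisted Hodge polygons. Although this should be built into the framework of \cite{Kramer-Miller3}, one must verify that the local contribution at a tamely ramified point $P$ (with unique preimage $Q$ of ramification index $s_P$) redistributes correctly across the $s_P$ characters $\psi'$ of $\Gal(X^\tame/X)$; this is a combinatorial check reminiscent of a local Riemann--Hurwitz calculation, and it is what ultimately powers the pointwise equality in the last display.
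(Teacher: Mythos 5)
Your proposal is correct and follows essentially the same route as the paper: pull back to $X^\tame$, apply Theorem \ref{theorem: intro stable with congruence implies stability} to the $\bm\delta^\tame$-stable tower upstairs, decompose both the Newton polygon (via inductivity of $L$-functions) and the Hodge polygon over the tame characters of $\Gal(X^\tame/X)$, and squeeze each twisted Newton polygon against its Hodge bound from \cite{Kramer-Miller3}. The compatibility issue you flag at the end is exactly the point the paper attributes to Riemann--Hurwitz together with tracking how the exponents $\epsilon_P$ of $\psi^{\otimes i}$ vary with $i$.
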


	\begin{remark}
		Our assumption that $\overline{X}^\tame$ is ordinary is mild: A Theorem of Bouw \cite{Bouw}
		guarnatees that a generic cover will be ordinary. This uses the fact that $\deg(f)|p-1$ in an essential way. 
		If $\deg(f)$ does not divide $p-1$ the lower Hodge bound is obtained by `averaging' the Hodge decomposition
		of each isotypical component of $H^1(\overline{X}^\tame)$ over the entire Frobenius orbit. 
	\end{remark}

	\begin{remark}
		Our proof of Theorem \ref{t:twisted stability} gives a much stronger result. We completely determine 
		the slopes, which depend on certain monodromy invariants. See \ref{ss:twisted} for more details.
	\end{remark}

	\begin{example}
		Let $\overline{X}=\mathbb{P}_{\F_q}^1$ and let $S=P_1,\dots,P_{2n}$. Let $\psi$ be a quadratic character 
		ramified at each $P_i$. Let $X_\infty/X$ be a $(1)_{P \in S}$-stable tower. Such a tower may be 
		obtained using Artin-Schreier-Witt theory (\S \ref{s:zptow}) from a rational function with simple poles at $P_1,\dots,P_{2n}$. Then $X^\tame$ is a hyperelliptic curve
		of genus $n-1$, which we assume is ordinary. Without being too precise, we obtain the decomposition
		\begin{align*}
			\mathrm{Ind}^{X^\tame}_{X} (X_{\infty}/X) &= X_{\infty}/X \oplus (\psi \otimes X_{\infty}/X).
		\end{align*}
		Let $\chi:\Z_p \to \C_p^\times$ be a finite character and let $\rho_\chi^\tame$ denote the pullback of $\rho_\chi$ along $f$. There is a factorization 
		$L(\rho_\chi^\tame,s)=L(\rho_\chi,s)L(\psi\otimes \rho_\chi,s)$. The slopes of the $L$-functions break up as follows:
		\begin{align*}
			\NP(\rho_\chi^\tame)&=\underbrace{\{0,\dots,0\}}_{3n-2}\sqcup \underbrace{\{e_\chi,\dots,e_\chi\}}_{3n-2}
			\sqcup \bigsqcup_{i=0}^{2n} \Bigg\{\frac{p-1}{2}, 2\frac{(p-1)}{2}, \dots, (2p^{m_\chi-1}-1)\frac{(p-1)}{2} \Bigg\}, \\\
			\NP(\rho_\chi)&=\underbrace{\{0,\dots,0\}}_{2n-1}\sqcup \underbrace{\{e_\chi,\dots,e_\chi\}}_{2n-1}
			\sqcup \bigsqcup_{i=0}^{2n} \Bigg\{2\frac{(p-1)}{2},4\frac{(p-1)}{2},\dots, (2p^{m_\chi-1}-2)\frac{(p-1)}{2} \Bigg\},\\
			\NP(\psi\otimes \rho_\chi)&=\underbrace{\{0,\dots,0 \}}_{n-1}\sqcup \underbrace{\{e_\chi,\dots,e_\chi\}}_{n-1}
			\sqcup \bigsqcup_{i=0}^{2n} \Bigg\{\frac{p-1}{2}, 3\frac{(p-1)}{2}, \dots, (2p^{m_\chi-1}-1)\frac{(p-1)}{2} \Bigg\}.
		\end{align*}
		Let us point out that this decomposition can be deduced from Theorem \ref{theorem: intro stable with congruence implies stability}, without any reference to \cite{Kramer-Miller3}. This is because Theorem \ref{theorem: intro stable with congruence implies stability} allows us to determine $\NP_q(\rho_\chi^\tame)$ and $\NP_q(\rho_\chi)$. The ``leftover'' slopes
		must be the slopes of the quadratic twist $\NP_q(\psi\otimes \chi)$.
	\end{example}
	
	\subsubsection{Equicharacteristic Results}\label{sss:equicharacteristic}
	
		It is well known that the space $\Hom(\Z_p,\C_p)$ may be identified with the open unit disk in $\C_p$ (via the map $\chi \mapsto \pi_\chi$), or equivalently with the $\C_p$-valued points of $\Spec(\Z_p\llbracket  T \rrbracket)$. From this vantage point, it is natural to consider the ``special'' point, which can be thought of as compatifying the open disc. 
		The special point corresponds to an equicharacteristic character $\chi_0:\Z_p \to \F_p\llbracket  T \rrbracket^\times$ defined by
		\begin{equation*}
			\chi_0(a)	=	(1+T)^a \in 1+T\F_p\llbracket  T \rrbracket.
		\end{equation*}
		As before, we define the composite character.
		\begin{equation*}
			\rho_0:\pi_1(X)	\to	\Z_p	\xrightarrow{\chi_0}	\F_p\llbracket  T\rrbracket^\times.
		\end{equation*}
		We may think of $\rho_0$ as the `limit' of characters tending towards the boundary of the open unit disc. 
		\begin{definition}
			The \emph{equicharacterstic $L$-function} of $X_\infty/X$ is the Artin $L$-function $L(\rho_0,s) \in 1+s\F_q\llbracket  T\rrbracket\llbracket  s\rrbracket$
			associated to $\rho_0$.
		\end{definition}
		The equicharacteristic $L$-function describes the limiting behavior of the $L(\rho_\chi,s)$ for finite $\chi$ as $m_\chi \to \infty$. As an example of the well behaved nature of this $L$-function, we have:
		
		\begin{theorem}\label{t:modpentire}
			Suppose that $X_\infty/X$ has strictly stable monodromy. Then $L(\rho_0,s)$ is a $T$-adic entire function on $\F_q(\!(T)\!)$.
		\end{theorem}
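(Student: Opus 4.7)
The plan is to realize $L(\rho_0,s)$ as the Fredholm determinant of a completely continuous Dwork-Monsky-Washnitzer operator on a $T$-adic Banach module; once that is done, $T$-adic entireness is automatic from complete continuity.

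First, via Artin-Schreier-Witt theory, I would encode the tower $X_\infty/X$ by a Witt vector $\bm f = (f_0,f_1,\ldots)$ over the coordinate ring of $X$. The strictly stable monodromy hypothesis translates into a sharp statement on pole orders: for each $P \in S$ and all sufficiently large $n$, the pole order of $f_n$ at $P$ is exactly $p^{n-1}\delta_P$. This provides the linear growth in $p^n$ that the subsequent $T$-adic estimates will exploit.

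Second, using the equicharacteristic Dwork splitting function built from $\chi_0(a) = (1+T)^a$, I would construct a Dwork-type operator $U_T$ acting on a $T$-adic dagger algebra $B_T$ of overconvergent functions on $X$. A twisted trace formula, namely the $T$-adic analogue of the Monsky-Reich fixed-point formula, then yields an identity of the form
\[ L(\rho_0,s) = \det(1 - sU_T \mid B_T)^{\pm 1} \]
up to easy factors coming from $H^0$ and $H^2$; since $X$ is affine of dimension one and $\rho_0$ is non-trivial, these factors reduce to $1$.

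Third, I would verify that $U_T$ is completely continuous with respect to a natural orthonormal basis of $B_T$ indexed by monomials. Here the strictly stable monodromy hypothesis is precisely what forces the matrix entries of $U_T$ to have $T$-adic valuations that grow linearly in the basis index, so the columns tend to zero $T$-adically. Complete continuity of $U_T$ is equivalent to $T$-adic entireness of $\det(1 - sU_T)$, which gives the theorem.

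The main obstacle is assembling the $T$-adic Monsky-Washnitzer formalism and the twisted trace formula on a general affine curve $X$. Previous work in this direction (for instance Liu-Wan and Kosters-Zhu) has been restricted to $X = \A^1_{\F_q}$. Once the formalism is in place, no ordinarity hypothesis on $\overline{X}$ is needed, because entireness is a qualitative statement about the $T$-adic Newton polygon and does not require the sharp matching with $\HP(X_\infty/X)$ that drives Theorems \ref{t:globaltouching} and \ref{theorem: intro stable with congruence implies stability}.
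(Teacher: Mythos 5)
Your proposal is correct and follows essentially the same route as the paper: the authors construct the equicharacteristic splitting function via their generalized Artin-Hasse exponential, show that strictly stable monodromy forces $\delta$-overconvergence of $\rho_{\chi_0}$ at each point of $S$ (Theorem \ref{t:local}, via Lemmas \ref{l:djpj} and \ref{l:pij}), and then deduce entireness from the nuclearity and Monsky trace formula results of the prequel \cite{Kramer-Miller-Upton}. The ``main obstacle'' you flag---the $T$-adic Dwork operator and trace formula formalism on a general affine curve---is precisely what that prequel supplies, and you are also right that no ordinarity hypothesis enters at this stage.
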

		
		In light of Theorem \ref{t:modpentire}, it is convenient for us to put the $L$-functions $L(\rho_0,s)$ and $L(\rho_\chi,s)$ for finite $\chi$ on equal footing. Let $T_q = T^{v_p(q)}$ and define the \emph{normalized Newton polygon} of $\rho_0$ to be the infinite convex polygon
		\begin{equation*}
			\NP(\rho_0)	=	\NP_{T_q} L(\rho_0,s).
		\end{equation*}
		We can now state the equicharacteristic analogues of our main theorems:
		
		\begin{theorem}\label{t:equicharacteristic}
			Assume that $\overline{X}$ is ordinary. Let $X_\infty/X$ be a $\Z_p$-tower with $\bm\delta$-stable monodromy. Then:
			\begin{enumerate}
				\item	The Newton polygon $\NP(\rho_0)$ lies above $\HP(X_\infty/X)$.
				\item	Let $d$ be as in Theorem \ref{t:globaltouching}. For all $n > 0$, $\NP (\rho_0)$ and $\HP(X_\infty/X)$ coincide on the interval (\ref{eq:globaltouching}).
				\item	Suppose that $m_0 = 0$ and that $p \equiv 1 \pmod{\delta_P}$ for all $P \in S$. Then $\NP(\rho_0) = \HP(X_\infty/X)$.	
			\end{enumerate}
		\end{theorem}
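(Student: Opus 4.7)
The plan is to deduce Theorem \ref{t:equicharacteristic} from the finite-character results (Theorems \ref{t:globaltouching} and \ref{theorem: intro stable with congruence implies stability}) via a limiting argument. The key tool is a characteristic-zero lift of $\rho_0$: the universal character $\tilde{\chi}_{\mathrm{univ}}\colon \Z_p \to \Z_p\llbracket T\rrbracket^\times$ defined by $a \mapsto (1+T)^a$, composed with $\rho$ to give $\tilde{\rho}_{\mathrm{univ}}\colon \pi_1(X) \to \Z_p\llbracket T \rrbracket^\times$. Writing $L(\tilde{\rho}_{\mathrm{univ}}, s) = \sum_k A_k(T) s^k$ with $A_k(T) \in W(\F_q)\llbracket T \rrbracket$, reduction modulo $p$ recovers $L(\rho_0, s)$, while specialization $T \mapsto \pi_\chi$ recovers $L(\rho_\chi, s)$ for every finite $\chi$. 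In particular, $\NP(\rho_0)$ is the $T_q$-adic Newton polygon of the power series whose $k$-th coefficient is $\overline{A}_k := A_k \bmod p$.

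The central technical step is a pointwise comparison of valuations. Setting $N_k = v_T(\overline{A}_k)$ and writing $A_k = \sum_i a_{k,i} T^i$, I would prove that whenever $e_\chi > N_k$,
\begin{equation*}
    v_{\pi_\chi}\bigl(A_k(\pi_\chi)\bigr) = N_k.
\end{equation*}
Indeed, the $i$-th summand of $A_k(\pi_\chi)$ has $\pi_\chi$-adic valuation $e_\chi v_p(a_{k,i}) + i$: for $i < N_k$ we have $v_p(a_{k,i}) \geq 1$, forcing this valuation to exceed $N_k$; for $i = N_k$ it equals $N_k$; and for $i > N_k$ it is again $> N_k$. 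The minimum is attained uniquely, so no cancellation occurs. Dividing by $f = v_p(q)$, this shows that on any bounded range $[0,K]$ and all $m_\chi$ sufficiently large, the raw coefficient heights defining $\NP(\rho_\chi)$ agree exactly with those defining $\NP(\rho_0)$. Since $L(\rho_0, s)$ is entire by Theorem \ref{t:modpentire}, the lower convex hulls also agree on $[0,K]$ for $m_\chi \gg 0$, and hence $\NP(\rho_\chi) \to \NP(\rho_0)$ uniformly on bounded intervals.

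With the comparison established, the three parts of Theorem \ref{t:equicharacteristic} follow by letting $m_\chi \to \infty$. For part (1), the inequality $\NP^{<e_\chi}(\rho_\chi) \succeq \HP^{<e_\chi}(X_\infty/X)$ from \eqref{eq: Newton over Hodge finite characters}, combined with the fact that $\HP^{<e_\chi}(X_\infty/X)$ exhausts $\HP(X_\infty/X)$ as $e_\chi \to \infty$, passes to the limit to give $\NP(\rho_0) \succeq \HP(X_\infty/X)$. For part (2), Theorem \ref{t:globaltouching} provides touching on \eqref{eq:globaltouching} for each fixed $n$ and all $m_\chi$ large enough, and the limit transfers this to $\NP(\rho_0)$. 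For part (3), Theorem \ref{theorem: intro stable with congruence implies stability} gives $\NP(\rho_\chi) = \HP^{<e_\chi}(X_\infty/X)$ for every finite $\chi$ under the congruence hypothesis, and the limit extends this equality to $\NP(\rho_0) = \HP(X_\infty/X)$ on all of $[0,\infty)$.

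The main obstacle is technical rather than conceptual: one must ensure that pointwise convergence of the raw coefficient heights transfers to convergence of the lower convex hulls defining the Newton polygons. For finite $\chi$ this is automatic since $L(\rho_\chi, s)$ is a polynomial of bounded degree, but $\NP(\rho_0)$ is an infinite polygon, and a priori a far-away coefficient could influence the hull near a given $k$. Ruling this out relies on the unbounded growth of the slopes of $\NP(\rho_0)$, which is essentially the content of Theorem \ref{t:modpentire}.
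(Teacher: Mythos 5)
Your strategy is sound but it is genuinely different from the paper's. You deduce the equicharacteristic statements as a \emph{limit} of the finite-character theorems, via specialization of the $T$-adic $L$-function $L(\tilde\rho_{\mathrm{univ}},s)=\sum A_k(T)s^k$ and a pointwise comparison of $v_T(\overline A_k)$ with $v_{\pi_\chi}(A_k(\pi_\chi))$. The paper never takes such a limit at the global level: it treats $\chi_0$ as a character valued in the equal-characteristic DVR $\F_p\llbracket T\rrbracket$ and runs the entire local-to-global machinery for it directly. This is why Lemma \ref{l:pij} and Theorem \ref{t:local} include the case ``$R$ has characteristic $p$,'' why Theorem \ref{t:KZ}, Corollary \ref{c:localperiodicity}, Corollary \ref{c:deltaHodge} and Theorem \ref{t:localEquality} are all stated for ``$\chi$ equicharacteristic or finite,'' and why the theorem in \S\ref{ss:slopeuniformity} proves Theorems \ref{t:globaltouching} and \ref{t:equicharacteristic} simultaneously; part (3) is the equicharacteristic case of Theorem \ref{t:NPHPequal}. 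The only specialization argument in the paper occurs locally, inside Kosters--Zhu's theory (the observations \eqref{eq: kosters-zhu valuation obs 1}--\eqref{eq: kosters-zhu valuation obs 2}), which is essentially your valuation comparison applied to the local characteristic series rather than to the global $L$-function. What your route buys is independence from re-verifying the machinery of \cite{Kramer-Miller-Upton} over a characteristic-$p$ base; what the paper's route buys is that it never has to pass Newton polygons through a limit at all.

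The one step you should not treat as routine is the transfer from coefficient heights to convex hulls. Your valuation computation is correct: for each $k$, either $N_k<e_\chi$ and $v_{\pi_\chi}(A_k(\pi_\chi))=N_k$ exactly, or both quantities are $\geq e_\chi$. But the inference ``raw heights agree for $k\in[0,K]$, hence the hulls agree on $[0,K]$'' is false as stated: a single point of height $\geq e_\chi$ (either with $k>K$, or with $k\leq K$ but $N_k\geq e_\chi$) can lower the hull at $x$-coordinates where the hull itself sits far below $e_\chi$ (e.g.\ heights $0,5,6$ versus $0,5,7$ at $k=0,1,2$ produce hulls of slope $3$ and $3.5$ that agree nowhere on $(0,2]$, even though they differ only at a point of large height). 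Entireness of $L(\rho_0,s)$ alone does not rule this out; what does is the lower bound \eqref{eq: Newton over Hodge finite characters} (and its $T$-adic analogue from Kosters--Zhu), which pins down the $x$-range occupied by the slope-$<e_\chi$ part and lets you formulate the comparison as an equality of $\NP^{<e_\chi}(\rho_\chi)$ with the corresponding slope-truncation of $\NP(\rho_0)$. This is exactly the Liu--Wan style limit theorem for $T$-adic $L$-functions, so the gap is fillable by a standard argument, but it must be phrased in terms of slope truncation rather than restriction of the hull to a bounded $x$-interval.
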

		
		\begin{corollary}\label{c:factor}
			In the setting of Theorem \ref{t:modpentire}, the $L$-function $L(\rho_0,s)$ factors over $\F_p(\!(T)\!)$ as a product of polynomials of degree $\leq d$.
		\end{corollary}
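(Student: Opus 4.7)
The plan is to combine the touching-interval structure of Theorem \ref{t:equicharacteristic}(2) with a Weierstrass-type slope factorization for $T$-adic entire power series. Writing $L_n = (p-1)np^{m_0}$, my first step would be to analyze $\HP(X_\infty/X)$ on each touching interval: a direct count in the sorted positive-slope multiset $\{(p-1)k/\delta_P\}_{k \geq 1,\, P \in S}$ shows that $\HP$ has constant slope $L_n$ throughout $[g-1+nd,\, g-1+|S|+nd]$, since the $|S|$ contributions equal to $L_n$, one from each $P \in S$ with $k = np^{m_0}\delta_P$, occupy the consecutive positions $nd-|S|+1,\ldots,nd$, while the slopes immediately outside are strictly smaller ($\mu_{nd-|S|}$) and strictly larger ($\mu_{nd+1}$).

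With this in hand, I would show that the touching endpoints $g-1+nd$ and $g-1+|S|+nd$ are genuine vertices of $\NP(\rho_0)$ for every $n \geq 1$. This is a short convexity argument: if the slope of $\NP(\rho_0)$ continued at $L_n$ just to the left of $g-1+nd$, then infinitesimally to the left $\NP(\rho_0)$ would dip below $\HP$ (since $\HP$ there rises with the strictly smaller slope $\mu_{nd-|S|}$), contradicting Theorem \ref{t:equicharacteristic}(1); the symmetric argument on the right uses $\mu_{nd+1}>L_n$.

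Next I would invoke the slope-factorization theorem for $T$-adic entire power series, the function-field analogue of the $p$-adic Weierstrass preparation used in \cite{Davis} and \cite{Kramer-Miller3}: at each vertex $v$ of $\NP(\rho_0)$ one factors $L(\rho_0,s)$ uniquely as a polynomial of degree $v$ over $\F_p(\!(T)\!)$ times an entire series, with Newton polygon equal to the left portion of $\NP(\rho_0)$ up to $v$. Telescoping over the vertices $\{g-1+nd,\, g-1+|S|+nd\}_{n\geq 1}$ writes $L(\rho_0,s)$ as an infinite product whose consecutive factors have degree $|S|$ (within a touching interval) or $d-|S|$ (across a gap), both at most $d$.

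The main obstacle is the initial segment preceding the first touching vertex $g-1+d$: for $g \geq 2$ it has length $g-1+d > d$, and the telescoping above leaves an oversized initial factor. I expect this to be resolved by the observation that $\NP(\rho_0)$ must be flat of height $0$ on $[0,\, g-1+|S|]$ (matching the $g-1+|S|$ zero slopes of $\HP$), creating an additional vertex at $g-1+|S|$ and isolating the unit-root polynomial of degree $g-1+|S|$; this residual polynomial then splits further over $\F_p(\!(T)\!)$ into pieces of degree $\leq d$ via the Frobenius-orbit structure of its reduction $L(\rho_0,s)\bmod T$, which is essentially the classical zeta polynomial of $X$. Making this last splitting precise is the most delicate step.
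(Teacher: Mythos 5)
Your first three steps are correct and are surely the intended argument: the Hodge polygon has constant slope $ne_0=(p-1)np^{m_0}$ on each touching interval, with strictly smaller slope immediately before and strictly larger slope immediately after, so convexity together with $\NP(\rho_0)\succeq\HP(X_\infty/X)$ forces vertices of $\NP(\rho_0)$ at both endpoints, and $T$-adic Weierstrass factorization at these vertices yields factors of degrees $|S|$ and $d-|S|$. You have also correctly located the delicate point, namely the initial segment $[0,g-1+d]$. However, your resolution of it does not hold up. The flatness of $\NP(\rho_0)$ on $[0,g-1+|S|]$ is true, but it does not follow from ``matching the zero slopes of $\HP$'': a polygon lying on or above $\HP$ may perfectly well have strictly fewer zero slopes. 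The correct justification is reduction modulo $T$: since $\chi_0\equiv 1\pmod{T}$, we have $L(\rho_0,s)\equiv Z(X,s)\pmod{T}$, and ordinarity of $\overline{X}$ shows that this reduction is a polynomial in $s$ of degree exactly $g-1+|S|$ (for $S$ rational) with unit leading coefficient, so $\NP(\rho_0)$ has exactly $g-1+|S|$ zero slopes and a vertex at $g-1+|S|$.

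The genuine gap is your last step. The unit-root factor $P_0(s)$ of degree $g-1+|S|$ is, by Hensel's lemma, the lift of the unit part of $Z(X,s)\bmod p$, which contains the degree-$g$ unit part $\overline{P}_0(s)$ of the zeta polynomial of $\overline{X}$. The Frobenius-orbit (i.e.\ irreducible-factor) structure of this reduction produces orbits of size up to $g$, and an irreducible factor of the reduction of degree $g$ lifts to an irreducible factor of $P_0$ of degree $g$ over $\F_q(\!(T)\!)$. Nothing forces these degrees to be $\leq d$: already for $\overline{X}$ ordinary of genus $3$ with one rational puncture, $\delta_P=1$ (so $d=1$), and $\overline{P}_0$ irreducible over $\F_q$, the polynomial $P_0$ has an irreducible cubic factor and no factorization into polynomials of degree $\leq d$ exists. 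So the statement must be read as permitting one additional initial factor of degree $g-1+|S|$ --- which is harmless for the application to the equicharacteristic Riemann hypothesis, since that only requires the factor degrees to be bounded --- and your proposed further splitting of that factor cannot be made to work as described.
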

		
		\paragraph{The Equicharacteristic Riemann hypothesis}
		
			In light of Corollary \ref{c:factor}, it seems natural to ask if there is a bound on the denominators of the Newton slopes of $L(\rho_0,s)$. This question is in the spirit of Goss's (wide open) Riemann hypothesis
		for Drinfeld modules. Thus we are led to make the following definition:
		
		\begin{definition}
			We say that a $\Z_p$-tower $X_\infty/X$ satisfies the \emph{equicharacteristic Riemann hypothesis} if
			$L(\rho_0,s)$ is a $T$-adic entire function whose zeros are all contained in a finite extension of $\mathbb{F}_p(\!(T)\!)$.
		\end{definition}
		
		It is not clear to us which towers should satisfy the equicharacteristic Riemann hypothesis. A hopeful guess is that
		it holds for all towers with strictly stable monodromy. From
		Theorem \ref{t:equicharacteristic} we can establish the Riemann hypothesis for
		certain towers.
		\begin{theorem}\label{t:equicharacteristicRH}
			Assume that $\overline{X}$ is ordinary. Let $X_\infty/X$ be a $\Z_p$-tower with $\bm\delta$-stable monodromy. Assume one of the two conditions hold:
			\begin{enumerate}
				\item $\delta_P \in \Z$ for all $P \in S$ and $\sum \delta_P < p$.
				\item $\delta_P \in \Z$ for all $P \in S$ and $p \equiv 1\pmod{\delta_P}$ for all $P \in S$.
			\end{enumerate}
			Then the equicharacteristic Riemann hypothesis holds for $X_\infty/X$.
		\end{theorem}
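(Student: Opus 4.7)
Since $X_\infty/X$ has strictly stable monodromy, Theorem \ref{t:modpentire} already guarantees that $L(\rho_0,s)$ is $T$-adic entire, so my only task is to show that under either hypothesis every zero of $L(\rho_0,s)$ lies in a single finite extension of $\F_p(\!(T)\!)$. My plan is to combine the factorization of Corollary \ref{c:factor} with a structural analysis of extensions of the local field $\F_q(\!(T)\!)$, treating the two cases in turn.

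The common starting point is Corollary \ref{c:factor}: $L(\rho_0,s)$ factors over $\F_p(\!(T)\!)$ as a product of polynomials of degree at most $d = p^{m_0}\sum_P \delta_P$. Under the integrality hypothesis $\delta_P \in \Z$ present in both cases, together with the implicit convention $m_0=0$ used in Theorem \ref{theorem: intro stable with congruence implies stability}, one has $d = \sum_P \delta_P$. Each zero of $L(\rho_0,s)$ is therefore a root of a polynomial of degree at most $d$ over $\F_q(\!(T)\!)$ and so lies in a finite extension of $\F_q(\!(T)\!)$ of degree at most $d$.

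In case (1), the hypothesis $\sum_P \delta_P < p$ forces $d < p$, so any such extension has ramification index at most $d < p$ and is tamely ramified. I would then appeal to the fact that the maximal tame extension of $\F_q(\!(T)\!)$ has topologically finitely generated Galois group, generated by a Frobenius lift together with the tame inertia; in particular it admits only finitely many open subgroups of bounded index, equivalently only finitely many intermediate extensions of degree at most $d$. Their compositum is then a single finite extension of $\F_q(\!(T)\!)$ (and hence of $\F_p(\!(T)\!)$) containing every zero of $L(\rho_0,s)$.

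In case (2), I would first invoke Theorem \ref{t:equicharacteristic}(3) to conclude that $\NP(\rho_0) = \HP(X_\infty/X)$. The Hodge slopes are $0$ (with multiplicity $g-1+|S|$) and $\tfrac{k(p-1)}{\delta_P}$ for $k\geq 1$, $P\in S$, and the congruence $p \equiv 1 \pmod{\delta_P}$ makes each of these slopes an integer. Consequently every zero $\alpha$ of $L(\rho_0,s)$ has integer $T$-valuation, so $\F_q(\!(T)\!)(\alpha)/\F_q(\!(T)\!)$ is unramified of degree at most $d$, and all such $\alpha$ lie in the single unramified extension $\F_{q^{d!}}(\!(T)\!)$. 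The main point that will require care across both cases is the implicit reduction to $m_0=0$: without it the factor degree $d=p^{m_0}\sum_P\delta_P$ could exceed $p$, and the tameness step in case (1) would break down, so I would need to verify that the paper's conventions indeed force $m_0=0$ under the stated hypotheses.
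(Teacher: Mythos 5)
Your case (1) is exactly the paper's argument: Corollary \ref{c:factor} gives a factorization into polynomials of degree at most $d$, and since $d<p$ every zero lies in one of the finitely many (automatically separable and tamely ramified) extensions of $\F_q(\!(T)\!)$ of degree less than $p$, whose compositum is a single finite extension. The loose end you flag about $m_0$ does close: by Proposition \ref{p:vj} the upper ramification breaks satisfy either $v_{j+1}=pv_j$ or else $v_{j+1}>pv_j$ with $p\nmid v_{j+1}$, so if $m_0\geq 1$ then minimality of $m_0$ forces $v_{m_0+1}>pv_{m_0}$ and hence $p\nmid v_{m_0+1}=p^{m_0}\delta_P$, contradicting $\delta_P\in\Z$. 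Thus ``$\delta_P\in\Z$ for all $P$'' is equivalent to $m_0=0$, and $d=\sum_P\delta_P$ as you need.

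Case (2) starts where the paper does (the paper's entire indication is ``use the third part of Theorem \ref{t:equicharacteristic}''), but the way you finish contains a false inference: knowing that a zero $\alpha$ has integer $T$-valuation does \emph{not} imply that $\F_q(\!(T)\!)(\alpha)/\F_q(\!(T)\!)$ is unramified. For $p$ odd, the polynomial $s^2-2Ts+T^2(1-T)$ has both roots $T(1\pm T^{1/2})$ of valuation $1$, yet they generate the ramified extension $\F_q(\!(T^{1/2})\!)$. What integrality of all Newton slopes actually buys you is the pure-slope factorization $L(\rho_0,s)=\prod_\lambda q_\lambda(s)$ with $q_\lambda\in\F_q(\!(T)\!)[s]$ of degree equal to the multiplicity $m_\lambda$ of the slope $\lambda$ in $\HP(X_\infty/X)$; here $m_0=g-1+|S|$ and $m_\lambda\leq|S|$ for $\lambda>0$. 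To get all zeros into one finite extension you still must show that only finitely many splitting fields occur among the infinitely many $q_\lambda$. The tameness mechanism of case (1) does this when the relevant degrees are less than $p$, but nothing in the hypotheses of case (2) bounds $|S|$ (or $g-1+|S|$) by $p$, so ``$\NP(\rho_0)=\HP(X_\infty/X)$ with integer slopes'' alone does not suffice. You need an additional argument controlling the splitting fields of the pure-slope factors; as written, this step is a genuine gap.
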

		To deduce the Riemann hypothesis from the first condition, use Corollary \ref{c:factor} in conjunction with the observation that there are finitely many extensions of $\mathbb{F}_q(\!(T)\!)$ with degree less than $p$. To deduce the Riemann hypothesis from the second condition, use the third part of Theorem \ref{t:equicharacteristic}.
	\subsection{Outline of Proof}
	
		\paragraph{Artin-Schreier-Witt Theory and Splitting Functions}
		The purpose of \S \ref{s:zptow} is to upgrade some classical constructions from the theory of exponential sums on tori to more general varieties.
		In \S \ref{ss:asw}-\ref{ss:flatliftings} we recall the basic notions of Artin-Schreier-Witt theory, and give an alternative formulation in terms of the flat liftings used in \cite{Kramer-Miller-Upton}. Our reason for working with flat liftings is that they lead to a very general theory of Dwork splitting functions. Such splitting functions are an integral part of the classical theory of exponential sums (see \cite{Adolphson-Sperber} and \cite{Liu-Wei}), and can be constructed quite explicitly from the Artin-Hasse exponential series (as in \cite{Dwork-zeta-of-hypersurface}). However,
		these classical splitting functions are inapplicable to the study of exponential sums on more general varieties. In \S \ref{ss:split} we construct
		splitting functions using a generalized Artin-Hasse exponential. This generalized
		Artin-Hasse exponential is ``compatible'' with a given lifting of Frobenius (as opposed to
		the classical Artin-Hasse exponential, which is compatible with the Frobenius $t \to t^p$). In particular, it allows us to explicitly define splitting functions over any
		variety.
This approach to splitting functions appears to be genuinely new, and allows us to deduce, for example, that the $F$-crystal associated to \emph{any} character of $\pi_1(X)$ (not necessarily finite) factoring through a $\Z_p$-tower is free. This result is essential in order to apply the main theorems of \cite{Kramer-Miller-Upton}.
			
		\paragraph{Local Towers and Global Extensions}
			In \S \ref{ss:strictlystable} we study the growth conditions satisfied by splitting functions
			associated to a $\delta$-stable tower $F_\infty/F$ of local fields. Every such tower extends in a natural way to a $\delta$-stable tower $X_\infty^\ext/\A_{\F_q}^1$ over the line, whose localization at $\infty$ is $F_\infty/F$. Let $\rho^{\ext}:\pi_q(\A_{\F_q}^1) \to \Z_p$ be the corresponding map.
			The purpose of \S \ref{ss:localtoglobal} is to study the family of characters $\rho_\chi^{\ext}$.
			Using work of Kosters and Zhu in \cite{KZ}, we know that $\HP(X_\infty^{\ext}/\A_{\F_q}^1)$ and $\NP(\rho_\chi^{\ext})$
			touch periodically when $\chi$ is a finite character of sufficiently large order.
			With a minimal amount of extra work, the theory of Kosters-Zhu may be extended to include the equicharacteristic character $\chi_0$ (see Theorem \ref{t:KZ}).
			This periodic touching between Newton and Hodge polygons is necessary to verify a technical hypotheses needed to apply results from \cite{Kramer-Miller-Upton} (see Corollary \ref{c:deltaHodge}). 

		\paragraph{Applying Local-to-Global results}
		We are now in a position to apply the main results from \cite{Kramer-Miller-Upton}. 
		Assume that $\overline{X}$ is ordinary and let $X_\infty/X$ be
		a $\Z_p$-tower with $\bm\delta$-stable monodromy, corresponding to $\rho:\pi_1(X) \to \Z_p$. Let $\chi$ be either
		a finite character or the equicharacteristic character. 
		For each $P \in S$ we may localize to obtain a tower $F_{P,\infty}/F_P$, which extends as above to a $\Z_p$-tower
		tower $X_{P,\infty}^{\ext}/\A_{\F_q}^1$ over the line. 
		
		The main result of \cite{Kramer-Miller-Upton} roughly states that
		$\NP^{<e_\chi}(\rho_\chi)$ and $\HP^{<e_\chi}(X_\infty/X)$ share a vertex
		if and only if $\NP^{<e_\chi}(\rho^{\ext}_{P,\chi})$ and $\HP^{<e_\chi}(X^{\ext}_{P,\infty}/\A_{\F_q}^1)$
		share a corresponding vertex for each $P \in S$. 
		Using the 
		periodic touching between Newton and Hodge polygons from the previous paragraph,
		we obtain periodic touching between $\NP^{<e_\chi}(\rho_{\chi})$ and $\HP^{<e_\chi}(X_{\infty}/X)$.
		This immediately allows us to deduce Theorem \ref{t:globaltouching}. To establish Theorem \ref{theorem: intro stable with congruence implies stability}, we use a classical result on exponential sums (see e.g. \cite{Adolphson-Sperber}). This result says that $\NP^{<e_\chi}(\rho^{\ext}_{P,\chi})$ and $\HP^{<e_\chi}(X^{\ext}_{P,\infty}/\A_{\F_q}^1)$
		are equal when $\chi$ has order $p$ and the congruence conditions of Theorem \ref{theorem: intro stable with congruence implies stability} is satisfied. We then use the work of Kosters-Zhu (and our
		equicharacteristic modification) to see that $\NP^{<e_\chi}(\rho^{\ext}_{P,\chi})$ and $\HP^{<e_\chi}(X^{\ext}_{P,\infty}/\A_{\F_q}^1)$ agree for any finite $\chi$ or for $\chi_0$. 
		Theorem \ref{theorem: intro stable with congruence implies stability}
		then follows from the main result of \cite{Kramer-Miller-Upton}.

	\subsection{Future Work}
		It would be interesting to understand what happens when one twists a $\bm\delta$-stable tower $X_\infty/X$ by a motive $M$ pure of weight $k$. 
		Our expectation is that $M \otimes X_\infty/X$ exhibits slope uniformity in the interval $[0,k+1]$.
		Perhaps a natural first step would be to consider $M$ coming from an Artin representation. Indeed, let $Y \to X$ be a Galois cover
		and consider the pullback tower $Y_\infty/Y$. Then $Y_\infty/Y$ will be $\bm\delta'$-stable for some $\bm\delta'=(\delta'_P)$ that depends
		on $\bm\delta$ and the ramification of $Y\to X$. In particular, $Y_\infty/Y$ is slope uniform by Theorem \ref{theorem: intro stable implies uniformity}. 
		We have a decomposition $\mathrm{Ind}^Y_X(Y_\infty/Y) = \bigoplus \psi \otimes X_\infty/X$, where the sum is taken over the irreducible representations of $\Gal(Y/X)$. 
		We expect that each $\psi\otimes X_\infty/X$ is slope uniform as well. 
		
		Another interesting direction would be to explore other classes of $\Z_p$-towers. Our notion of $\bm\delta$-stable monodromy is natural condition from an analytic standpoint, but is
		somewhat ad-hoc from a geometrical point of view. 
		Instead, one may consider geometric towers, i.e., towers coming from the relative $p$-adic \'etale cohomology of a smooth fibration $Z \to X$. 
		This includes the Igusa tower, where the equicharacteristic $L$-function is closely related to the space of $\F_q\llbracket  T \rrbracket$-valued modular forms.
		In general, geometric towers are not $\bm\delta$-stable. However, as demonstrated by the first author in \cite{Kramer-Miller2}, the
		monodromy of a geometric towers exhibits a similar type of stability. Daqing Wan has conjectured that geometric towers should be slope stable
		in the appropriate sense, but essentially nothing is known in this direction.
		
		Finally, we believe that for a sufficiently well-behaved tower $X_\infty/X$, geometric data should be
		encoded in the various $L$-functions under consideration. For example, we hope to establish a combinatorial formula for the $a$-numbers of each $\overline{X}_n$, depending on the Newton polygon of the equicharacteristic $L$-function and
		the ramification breaks of the tower. This is an ongoing project with Jeremy Booher and Bryden Cais, motivated by computations and conjectures formulated by Booher and Cais in \cite{Booher-Cais}.
		Our hope is that this formula holds for all $\bm\delta$-stable towers over any ordinary curve. It seems likely that the gluing methods
		developed in the prequel article \cite{Kramer-Miller-Upton} will allow one to bootstrap results of this form from the special case $X = \mathbb{A}_{\F_q}^1$. 
		More generally, we hope that for well behaved towers the $T$-adic $L$-function defined Liu-Wan \cite{Liu} should contain information
		on invariants of the Dieudonn\'e module associated to $\mathrm{Jac}(X_n)$.

		\subsection{Acknowledgments}
		We would like to thank Daqing Wan for many discussions about the nature of $\Z_p$-towers.
	
\section{\texorpdfstring{$\Z_p$}{Zp}-Towers}\label{s:zptow}
	
	\subsection{Artin-Schreier-Witt Theory via Witt Vectors}\label{ss:asw}
	
		Let $A$ be a ring and let $W(A)$ denote the ring of $p$-typical Witt vectors of $A$. Recall that as sets we have $W(A) = A^\N$. The ring structure on $W(A)$ is characterized uniquely by the requirement that, for each $i \geq 0$, the \emph{ghost map} $w_i:W(A) \to A$ defined by
		\begin{equation*}
			w_i(a_0,a_1,...) = \sum_{j = 0}^n p^j a_j^{p^{i-j}}
		\end{equation*}
		is a functorial homomorphism of rings (\cite{Haze}, 15.3.10). 
		Let $X$ be an $\F_p$-scheme. Let $X_\et$ denote the small \'etale site of $X$. For each $n \geq 0$, the assignment $U \mapsto W_n(\calO_U)$ defines a sheaf of rings on $X_\et$, which we denote by $\tilde{W}_n$. We define $\tilde{W} = \varprojlim_n \tilde{W}_n$. We equip $\tilde{W}$ with the $\Z_p$-module endomorphism
		\begin{equation*}
			\wp	=	F-\id,
		\end{equation*}
		where $F$ denotes the Frobenius endomorphism on $\tilde{W}$.
		\begin{lemma}\label{l:lang}
			Let $X$ be an $\F_p$-scheme. For each $n \geq 0$, there is an exact sequence of \'etale sheaves of $\Z_p$-modules
			\begin{equation}\label{eq:lang}
				0	\to	W_n(\F_p)	\to	\tilde{W}_n	\xrightarrow{\wp}	\tilde{W}_n	\to	0.
			\end{equation}
		\end{lemma}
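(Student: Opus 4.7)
The plan is to prove the lemma by induction on $n$, with the base case $n=1$ being the classical Artin–Schreier sequence and the inductive step handled via the snake lemma applied to the Verschiebung filtration.

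For the base case $n=1$, we have $\tilde W_1 = \calO_X$ and $\wp$ acts on sections over an \'etale $U \to X$ by $a \mapsto a^p - a$. The kernel is obviously the constant sheaf $\F_p = W_1(\F_p)$. For \'etale-local surjectivity, given $b \in \calO_U(U)$, the scheme $U[x]/(x^p - x - b) \to U$ is finite \'etale (the derivative of $x^p - x - b$ is $-1$, which is a unit) and the tautological $x$ maps to $b$ under $\wp$. This is the usual argument.

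For the inductive step, I would use the short exact sequence of \'etale sheaves of abelian groups
\begin{equation*}
0 \to \tilde W_1 \xrightarrow{V^{n-1}} \tilde W_n \to \tilde W_{n-1} \to 0,
\end{equation*}
where the first map is iterated Verschiebung and the second is the truncation projection. The key compatibility is that $\wp$ commutes with both maps. For the projection this is tautological; for $V^{n-1}$ it amounts to checking $FV = VF$ on Witt vectors in characteristic $p$, which holds because in characteristic $p$ both sides equal multiplication by $p$, and one checks directly on coordinates that $F$ acts as the componentwise $p$-th power while $V$ is the componentwise shift. Form the vertical diagram with $\wp$ in each column and apply the snake lemma.

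By induction and the base case, the kernels of the outer columns are $\F_p$ and $W_{n-1}(\F_p)$ and their cokernels vanish (as \'etale sheaves). The snake lemma then produces the exact sequence
\begin{equation*}
0 \to \F_p \to \ker(\wp_{\tilde W_n}) \to W_{n-1}(\F_p) \to 0 \to \mathrm{coker}(\wp_{\tilde W_n}) \to 0,
\end{equation*}
which immediately gives surjectivity of $\wp$ on $\tilde W_n$. Finally, to identify the middle kernel with $W_n(\F_p)$, note that the tautological inclusion $W_n(\F_p) \hookrightarrow \ker(\wp_{\tilde W_n})$ fits into a morphism of short exact sequences with the snake sequence above (via the Witt-vector Verschiebung and truncation for $W_\bullet(\F_p)$), and the five lemma closes the argument. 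The main subtlety is just the commutativity $\wp V^{n-1} = V^{n-1} \wp$, which is where the characteristic $p$ hypothesis is essential; everything else is formal.
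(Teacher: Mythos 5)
Your proof is correct and is exactly the ``inductive lifting argument'' the paper sketches: the classical Artin--Schreier sequence as the base case, with the induction run through the Verschiebung filtration $0 \to \tilde{W}_0 \xrightarrow{V^{n}} \tilde{W}_n \to \tilde{W}_{n-1} \to 0$ and the snake lemma, the only real content being that $\wp$ commutes with $V$ in characteristic $p$. (One cosmetic caveat: the paper's indexing has $\tilde{W}_0 = \G_a$, so your base case $n=1$ and shift $V^{n-1}$ should be relabeled accordingly.)
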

		\begin{proof}
			This is well known for the case $n = 0$ (see e.g. \cite{Deligne-sga4.5}). The general case follows from an inductive lifting argument.
		\end{proof}
		
		\begin{lemma}\label{l:Hilbert90}
			Let $X$ be an affine scheme over $\F_p$. Then for all $n \geq 0$, $H^1(X_\et,\tilde{W}_n) = 0$.
		\end{lemma}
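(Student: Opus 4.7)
My plan is to proceed by induction on $n$, reducing to the case $n=1$ via the Verschiebung short exact sequence.

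\emph{Base case ($n=1$).} Here $\tilde W_1$ is simply $\mathbb{G}_a$, the sheafification on $X_{\et}$ of the structure sheaf. The comparison theorem of Grothendieck (SGA\,4, or Milne, Chapter III) identifies the \'etale cohomology of a quasi-coherent sheaf with its Zariski cohomology, so $H^1(X_{\et},\tilde W_1)=H^1(X_{\mathrm{Zar}},\calO_X)$. Since $X$ is affine, this vanishes by Serre's theorem.

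\emph{Inductive step.} For each $n\geq 1$ the Verschiebung $V$ fits into a short exact sequence of \'etale sheaves of abelian groups
\begin{equation*}
    0\longrightarrow \tilde W_{n-1}\xrightarrow{\;V\;}\tilde W_n\xrightarrow{\;R\;}\tilde W_1\longrightarrow 0,
\end{equation*}
where $R$ is the projection onto the zeroth ghost component (equivalently, the map $(a_0,\dots,a_{n-1})\mapsto a_0$). Exactness may be checked on stalks, where it reduces to the classical statement for Witt vectors over a (strictly henselian) local ring. Taking the associated long exact sequence in \'etale cohomology gives
\begin{equation*}
    H^1(X_{\et},\tilde W_{n-1})\longrightarrow H^1(X_{\et},\tilde W_n)\longrightarrow H^1(X_{\et},\tilde W_1).
\end{equation*}
Both outer terms vanish (the first by induction, the second by the base case), so the middle term vanishes as well.

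The only mildly delicate point is invoking the qcoh comparison for \'etale vs.\ Zariski cohomology to handle the base case; after that, the Verschiebung filtration reduces everything formally to $n=1$, so there is no real obstacle.
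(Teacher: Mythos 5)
Your proof is correct and follows essentially the same route as the paper: the base case is the additive Hilbert 90 (\'etale--Zariski comparison for quasi-coherent sheaves) plus affineness, and the general case is induction along the Verschiebung filtration, which is exactly the ``inductive lifting argument'' the paper leaves implicit. The only cosmetic difference is an indexing offset: the paper's $\tilde W_0$ is your $\tilde W_1 = \G_a$.
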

		\begin{proof}
			Noting that $\tilde{W}_0 = \G_a$, by Hilbert's theorem 90 \cite[Remark 9.8]{Milne} we have
			\begin{equation*}
				H^1(X_\et,\tilde{W}_0) = H^1(X,\calO_X) = 0,
			\end{equation*}
			since $X$ is affine. The general case follows from an inductive lifting argument as above.
		\end{proof}
		
		\begin{theorem}
			Let $X = \Spec(\overline{A})$ be an affine scheme over $\F_p$. There is a natural isomorphism of $\Z_p$-modules
			\begin{equation*}
				W(\overline{A})/\wp W(\overline{A})	\xrightarrow{\sim}	\Hom(\pi_1(X),\Z_p).
			\end{equation*}
		\end{theorem}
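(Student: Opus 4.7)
The plan is to combine the four-term exact sequences associated to (\ref{eq:lang}) for each finite $n$ and pass to the inverse limit. First I would take \'etale cohomology of (\ref{eq:lang}) on $X = \Spec(\overline{A})$. Since $H^0(X_\et, \tilde{W}_n) = W_n(\overline{A})$ and $H^1(X_\et, \tilde{W}_n) = 0$ by Lemma \ref{l:Hilbert90}, the long exact sequence collapses to the four-term exact sequence
\begin{equation*}
0 \to W_n(\F_p) \to W_n(\overline{A}) \xrightarrow{\wp} W_n(\overline{A}) \to H^1(X_\et, W_n(\F_p)) \to 0.
\end{equation*}
Because $W_n(\F_p)$ is a constant sheaf of finite abelian groups with trivial Galois action, the standard classification of torsors identifies $H^1(X_\et, W_n(\F_p)) \cong \Hom(\pi_1(X), W_n(\F_p))$.

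Next, I would take the inverse limit over $n$. The truncation maps $W_{n+1}(\overline{A}) \to W_n(\overline{A})$ and $W_{n+1}(\F_p) \to W_n(\F_p)$ are surjective, so these projective systems satisfy Mittag-Leffler. Splitting the four-term sequence into two short exact sequences and applying Mittag-Leffler yields exactness after inverse limit for the first three terms. To identify the cokernel, I would verify that $\varprojlim_n \mathrm{im}(\wp_n) = \mathrm{im}(\wp\colon W(\overline{A}) \to W(\overline{A}))$: given $x = (x_n) \in \varprojlim_n \mathrm{im}(\wp_n)$, the preimages $T_n = \wp^{-1}(x_n) \subset W_n(\overline{A})$ form an inverse system of non-empty torsors under the finite groups $W_n(\F_p)$, so $\varprojlim T_n \neq \emptyset$ and any element is a $y \in W(\overline{A})$ with $\wp(y) = x$. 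Combined with $\varprojlim_n \Hom(\pi_1(X), W_n(\F_p)) = \Hom(\pi_1(X), \Z_p)$, this produces the four-term exact sequence
\begin{equation*}
0 \to \Z_p \to W(\overline{A}) \xrightarrow{\wp} W(\overline{A}) \to \Hom(\pi_1(X), \Z_p) \to 0.
\end{equation*}

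The desired isomorphism $W(\overline{A})/\wp W(\overline{A}) \xrightarrow{\sim} \Hom(\pi_1(X), \Z_p)$ then follows immediately, and its naturality in $\overline{A}$ is inherited from the functoriality of the boundary map in the cohomological long exact sequence. The main technical point is the passage to the inverse limit, in particular the identification of $\varprojlim \mathrm{im}(\wp_n)$ with $\mathrm{im}(\wp)$, but this is routine given that every transition map in sight is surjective.
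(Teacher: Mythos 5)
Your proposal is correct and follows essentially the same route as the paper: apply the long exact sequence of Lemma \ref{l:lang}, use Lemma \ref{l:Hilbert90} to truncate it to a four-term sequence identifying $W_n(\overline{A})/\wp W_n(\overline{A})$ with $\Hom(\pi_1(X),W_n(\F_p))$, and pass to the inverse limit. The paper leaves the inverse-limit step implicit, whereas you spell out the Mittag--Leffler and torsor arguments needed to justify it; that is a welcome but routine elaboration, not a different proof.
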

		\begin{proof}
			By Lemma \ref{l:lang}, for each $n \geq 0$ we have a long exact sequence
			\begin{equation*}
				0	\to	W_n(\F_p)	\to	W_n(\overline{A}) \xrightarrow{\wp} W_n(\overline{A})	\to	H^1(X_\et,W_n(\F_p))	\to	H^1(X_\et,\tilde{W}_n)	\to	\cdots.
			\end{equation*}
			Note that $H^1(X_\et,W_n(\F_p)) = \Hom(\pi_1(X),W_n(\F_p))$. By Lemma \ref{l:Hilbert90}, the map
			\begin{equation*}
				W_n(\overline{A})/\wp W_n(\overline{A})	\to	\Hom(\pi_1(X),W_n(\F_p))
			\end{equation*}
			is an isomorphism. The theorem follows by passing to the inverse limit.
		\end{proof}
		
	\subsection{Artin-Schreier-Witt Theory via Flat Liftings}\label{ss:flatliftings}
	
		Let $X=\Spec(\overline{A})$ be an affine scheme of characteristic $p$. It will be convenient to replace the ring of Witt vectors $W(\overline{A})$ by a suitable lifting of $\overline{A}$ to characteristic $0$. Let $R$ be a complete Noetherian local ring with maximal ideal $\frakm$ and residue field $\F_p$. By a \emph{flat lifting} of $X$ over $R$, we will mean a pair $(A,\sigma)$ where $A$ is a lifting of $\overline{A}$ to a flat $R$-algebra, and $\sigma:A \to A$ is a lifting of the absolute Frobenius endomorphism of $X$.
		
		Following our approach in \cite{Kramer-Miller-Upton}, we will typically fix a flat lifting $(A,\sigma)$ of $X$ over $\Z_p$ and then pass to general $R$ by base change. Over $\Z_p$, the connection between flat liftings and Witt vectors is provided by the following:
		
		\begin{lemma}[{\cite[Lemma 17.6.9]{Haze}}]\label{l:ahexp}
			There is a unique ring map $D_\sigma:A \to W(A)$ making the following diagram commute for each $i \geq 0$:
			\begin{equation*}
				\begin{tikzcd}
					A	\arrow[r,"D_\sigma"]	\arrow[dr,"\sigma^i"']	&	W(A)	\arrow[d,"w_i"]	\\
						&	A
				\end{tikzcd}
			\end{equation*}
		\end{lemma}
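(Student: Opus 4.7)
The plan is to exploit the universal property of Witt vectors through the collected ghost map, reducing existence to Dwork's congruence criterion. Because $A$ is a flat $\Z_p$-algebra, it is $p$-torsion free, so the product of ghost maps $w = (w_i)_{i \geq 0} \colon W(A) \to A^{\N}$ is injective. Uniqueness of $D_\sigma$ is therefore automatic: the prescribed identities $w_i \circ D_\sigma = \sigma^i$ pin down the image of each $a \in A$ completely.

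For existence, I would invoke Dwork's lemma on Witt vectors, which characterizes the image of $w$ over a $p$-torsion-free ring equipped with a Frobenius lift $\phi$: a sequence $(x_i) \in A^{\N}$ lies in the image of $w$ precisely when $\phi(x_i) \equiv x_{i+1} \pmod{p^{i+1}}$ for every $i \geq 0$. Applied with $\phi = \sigma$ and $x_i = \sigma^i(a)$, the congruences collapse to the tautology $\sigma(\sigma^i(a)) = \sigma^{i+1}(a)$. This produces, for each $a \in A$, a unique Witt vector $D_\sigma(a) \in W(A)$ whose $i$-th ghost component is $\sigma^i(a)$, defining the set-theoretic map $D_\sigma$.

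It then remains to upgrade $D_\sigma$ to a ring homomorphism. Since $w$ is an injective ring homomorphism and the assignment $a \mapsto (\sigma^i(a))_{i \geq 0}$ is a ring homomorphism $A \to A^{\N}$ (each $\sigma^i$ being a ring endomorphism), additivity and multiplicativity of $D_\sigma$ are forced.

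The only substantive ingredient is Dwork's lemma itself, but this is a standard fact (see \cite{Haze}) and requires no new input; the difficulty, such as it is, lies in confirming that the tautological choice of ghost sequence satisfies the hypotheses of the lemma, which it visibly does. The rest of the argument is a formal consequence of injectivity of the ghost map in the $p$-torsion-free setting.
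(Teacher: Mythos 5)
Your proof is correct. The paper offers no argument of its own here---it simply cites \cite[Lemma 17.6.9]{Haze}---and your argument is precisely the standard proof of that cited result: flatness of $A$ over $\Z_p$ gives $p$-torsion-freeness, hence injectivity of the ghost map (yielding uniqueness), and Dwork's lemma applies tautologically to the sequence $(\sigma^i(a))_{i\geq 0}$ since the required congruences are equalities, with the ring-homomorphism property forced by injectivity of $w$.
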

		
		By Lemma \ref{l:ahexp} and functoriality of the Witt construction, we obtain a \emph{Frobenius-compatible} map
		\begin{equation*}
			\overline{D}_\sigma:A	\xrightarrow{D_\sigma}	W(A)	\to	W(\overline{A}).
		\end{equation*}
		We define as before an additive endomorphism $\wp = \sigma - \id:A \to A$. Let $A^\infty$ denote the $p$-adic completion of $A$. The following justifies our use of flat liftings in the context of Artin-Schreier-Witt theory:
		
		\begin{theorem}\label{t:asw}
			The natural map $A^\infty/\wp A^\infty \to W(\overline{A})/\wp W(\overline{A})$ is an isomorphism. In particular, there is a natural isomorphism of $\Z_p$-modules.
			\begin{equation*}
				A^\infty/\wp A^\infty	\xrightarrow{\sim}	\Hom(\pi_1(X),\Z_p).
			\end{equation*}
		\end{theorem}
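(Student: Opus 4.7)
The plan is to prove the isomorphism by establishing parallel Artin-Schreier-Witt theories on the two sides, matched by the ghost-compatible map $\overline{D}_\sigma$.

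First I would show that $\overline{D}_\sigma$ extends uniquely to a $\wp$-equivariant ring homomorphism $A^\infty \to W(\overline{A})$. The key point is that $pW(\overline{A}) \subseteq VW(\overline{A})$, since $p = VF$ on Witt vectors in characteristic $p$; hence any $p$-adic Cauchy sequence in $A$ has $V$-adically Cauchy image in $W(\overline{A})$, and the extension exists because $W(\overline{A}) = \varprojlim_n W_n(\overline{A})$ is $V$-adically complete. The compatibility $\overline{D}_\sigma \circ \sigma = F \circ \overline{D}_\sigma$ follows from the ghost identity $w_i \circ D_\sigma = \sigma^i$, so the map descends to $\wp$-cokernels.

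Next I would upgrade the flat lifting to an \'etale sheaf $\tilde{\calA}$ on $X_\et$: by the \'etale lifting theorem, for any \'etale $U = \Spec(\overline{B}) \to X$ there is a unique flat $\Z_p$-algebra $B$ lifting $\overline{B}$ and extending $A$, equipped with a canonical Frobenius lift, and these glue into a sheaf $\tilde{\calA}$. Writing $\tilde{\calA}_n = \tilde{\calA}/p^n\tilde{\calA}$, the main technical step is the Lang-type exact sequence
\begin{equation*}
0 \to \Z/p^n\Z \to \tilde{\calA}_n \xrightarrow{\wp} \tilde{\calA}_n \to 0
\end{equation*}
on $X_\et$, parallel to Lemma \ref{l:lang}. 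The case $n=1$ is the classical Artin-Schreier sequence, since $\tilde{\calA}_1 = \calO_X$ and $\wp$ reduces to Frobenius minus identity. The general case follows by devissage using $0 \to \tilde{\calA}_1 \xrightarrow{p^n} \tilde{\calA}_{n+1} \to \tilde{\calA}_n \to 0$ and the snake lemma. A parallel induction from Hilbert 90 yields $H^1(X_\et,\tilde{\calA}_n) = 0$ for affine $X$, as in Lemma \ref{l:Hilbert90}.

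Taking cohomology of the Lang sequence then gives natural isomorphisms $A_n/\wp A_n \cong \Hom(\pi_1(X),\Z/p^n\Z)$ for each $n$, which pass to the inverse limit to yield $A^\infty/\wp A^\infty \cong \Hom(\pi_1(X),\Z_p)$. Because $\overline{D}_\sigma$ induces a $\wp$-equivariant sheaf morphism $\tilde{\calA}_n \to \tilde{W}_n$ restricting to the identity on the constant subsheaf $\Z/p^n\Z$, the induced map on $\wp$-cokernels is compatible with both identifications with $\Hom(\pi_1(X),\Z/p^n\Z)$, and is therefore an isomorphism. I expect the main obstacle to be verifying \'etale-local surjectivity of $\wp$ on $\tilde{\calA}_n$: given a local section $c$ of $\tilde{\calA}_n$, one must solve $\sigma(b) - b = c$ after \'etale refinement, which reduces modulo $p$ to the classical Artin-Schreier equation on $\calO_X$ and then lifts successively along the short exact sequence above.
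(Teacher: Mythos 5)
Your argument is correct, but it takes a genuinely different route from the paper. The paper's proof is a two-line rigidity argument: the map induced by $\overline{D}_\sigma$ reduces modulo $p$ to the identity on $\overline{A}/\wp\overline{A}$, and since both $A^\infty/\wp A^\infty$ and $W(\overline{A})/\wp W(\overline{A})$ are $p$-adically complete, the cited Lemma 2.3 of the prequel upgrades this to an isomorphism; the identification with $\Hom(\pi_1(X),\Z_p)$ is then inherited from the Witt-vector side, which was already established. You instead redevelop Artin--Schreier--Witt theory from scratch on the flat-lifting side, sheafifying $(A,\sigma)$ over $X_\et$ via topological invariance of the \'etale site and running the Lang-sequence and Hilbert~90 arguments in parallel with Lemmas \ref{l:lang} and \ref{l:Hilbert90}. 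This is more work but buys something real: it exhibits the isomorphism $A/(p^n,\wp) \cong \Hom(\pi_1(X),\Z/p^n\Z)$ directly at each finite level in terms of the lifting, rather than only after comparison with $W(\overline{A})$. Two points deserve slightly more care than you give them. First, in the d\'evissage for the kernel of $\wp$ on $\tilde{\calA}_{n+1}$, the snake lemma only yields an extension of $\Z/p^n\Z$ by $\Z/p\Z$; you should note that the constant section $1$ is killed by $\wp$ and has additive order $p^{n+1}$ by flatness of $A$, which pins the extension down as $\Z/p^{n+1}\Z$. Second, the final passage to the inverse limit identifies $\varprojlim_n A^\infty/(\wp A^\infty + p^nA^\infty)$ with $A^\infty/\wp A^\infty$, which requires $\wp A^\infty$ to be closed (equivalently, that the quotient is $p$-adically separated and complete); this is exactly the content of the lemma the paper invokes, so you cannot entirely avoid it, but it is a standard completeness argument and does not affect the validity of your approach.
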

		\begin{proof}
			The reduction of this map modulo $p$ is the identity map on $\overline{A}/\wp \overline{A}$. Since both modules are $p$-adically complete, the claim follows from \cite[Lemma 2.3]{Kramer-Miller-Upton}.
		\end{proof}
		
		\begin{remark}
			The isomorphism of Theorem \ref{t:asw} is functorial in the following sense: Let $Y = \Spec(\overline{B})$ be another $\F_p$-scheme, and let $(B,\tau)$ be a flat lifting of $Y$ over $\Z_p$. If $h:A \to B$ is any \emph{Frobenius-compatible} map of flat liftings, then there is a commutative diagram:
			\begin{equation*}
				\begin{tikzcd}
					A^\infty/\wp A^\infty	\arrow[d,"h"']	\arrow[r,"\sim"]	&	\Hom(\pi_1(X),\Z_p)	\arrow[d,"h^*"]	\\
					B^\infty/\wp B^\infty	\arrow[r,"\sim"]	&	\Hom(\pi_1(Y),\Z_p)
				\end{tikzcd}
			\end{equation*}
		\end{remark}
		
		\begin{definition}
			Let $x$ be a closed point of $X$. The \emph{Teichm\"uller lifting} of $x$ is the Frobenius-compatible $\Z_p$-algebra map
			\begin{equation*}
				\hat{x}:A	\xrightarrow{\overline{D}_\sigma}	W(\overline{A})	\to	W(k(x)),
			\end{equation*}
			where $k(x)$ denotes the residue field at $x$.
		\end{definition}
		
		The Teichm\"uller lifting $\hat{x}$ is the unique Frobenius-compatible map lifting the natural map $\overline{A} \to k(x)$. Given an element $f \in A$, we write $f(\hat{x}) \in W(k(x))$ for the value of $f$ at the Teichm\"uller point $\hat{x}$.
		
		
		\begin{proposition}\label{p:aswfrob}
			Let $n \in \N \sqcup \{\infty\}$ and let $f \in A^\infty$. Let $\rho:\pi_1(X) \to \Z_p$ be the associated map from Artin-Schreier-Witt theory. Let $x$ be a closed point of $X$. As elements of $\Z_p$, we have
			\begin{equation*}
				\rho(\Frob_x)	=	\Tr_{k(x)/\F_p}	f(\hat{x}).
			\end{equation*}
			Here, we identify the Galois group of $k(x)/\F_p$ with that of $W(k(x))/\Z_p$.
		\end{proposition}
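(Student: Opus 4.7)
The plan is to reduce the statement to the case of a finite field via the functoriality of Artin--Schreier--Witt theory, and then compute the value at Frobenius directly from the Lang torsor presentation of the character.

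First, observe that $W(k(x))$ equipped with the Witt vector Frobenius $F$ is a flat $\Z_p$-lifting of $k(x)$, and that the Teichm\"uller lifting $\hat{x}: A \to W(k(x))$ is Frobenius-compatible by construction (since $\overline{D}_\sigma$ is Frobenius-compatible and the Witt vector quotient map $W(\overline{A}) \to W(k(x))$ intertwines Frobenii). Applying the functoriality of the Artin--Schreier--Witt correspondence stated in the remark following Theorem~\ref{t:asw} to $\hat{x}$, one obtains a commutative diagram in which the pullback character $\rho \circ \iota_x: \pi_1(\Spec(k(x))) \to \Z_p$, with $\iota_x$ the map of fundamental groups induced by $\hat{x}$, corresponds to the class of $f(\hat{x}) \in W(k(x))/\wp W(k(x))$. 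Under a compatible choice of geometric point above $x$, $\iota_x$ sends the canonical topological generator of $\pi_1(\Spec(k(x))) = \hat\Z$ to $\Frob_x$.

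It therefore suffices to verify the statement directly for the Artin--Schreier--Witt character attached to an arbitrary element $h \in W(k(x))$. By Lemma~\ref{l:lang} applied over $\Spec(\overline{k(x)})$ we may choose $g \in W(\overline{k(x)})$ with $\wp(g) = F(g) - g = h$, and the associated character $\rho_h: \hat\Z \to \Z_p$ is given by the Lang cocycle $\gamma \mapsto \gamma(g) - g$. Setting $d = [k(x):\F_p]$, we have $\Frob_x = F^d$ on $W(\overline{k(x)})$, and a telescoping induction using the relation $F(g) = g + h$ yields
\begin{equation*}
    \Frob_x(g) - g = F^d(g) - g = \sum_{i=0}^{d-1} F^i(h) = \Tr_{W(k(x))/\Z_p}(h),
\end{equation*}
which coincides with $\Tr_{k(x)/\F_p}(h)$ under the identification of Galois groups specified in the statement. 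Taking $h = f(\hat{x})$ gives the desired formula.

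The main obstacle is the functoriality step: one must check that $W(k(x))$ genuinely fits into the framework of flat liftings used in the remark, and that the induced map $\iota_x$ really sends the canonical Frobenius of $\pi_1(\Spec(k(x)))$ to the conjugacy class $\Frob_x \in \pi_1(X)$. Both facts are essentially tautological once one unwinds the definitions: $\hat{x}$ is the unique Frobenius-compatible $\Z_p$-algebra lifting of $\overline{A} \to k(x)$, and its induced map on \'etale fundamental groups is the standard inclusion of the decomposition group at $x$ up to the choice of a geometric point.
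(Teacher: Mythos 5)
Your proof is correct and follows essentially the same route as the paper's: reduce to the Artin--Schreier--Witt character of $f(\hat{x})$ over the residue field (which the paper does implicitly via the explicit Lang-cocycle description of $W(k(x))/\wp W(k(x)) \to \Hom(\Gal(k/k(x)),\Z_p)$, and which you justify more carefully via the functoriality remark), and then evaluate at $\Frob_x = F^d$ by the same telescoping identity $F^d b - b = \sum_{j=0}^{d-1}F^j(Fb-b) = \Tr_{k(x)/\F_p} f(\hat{x})$. Your extra care with the functoriality step is a legitimate filling-in of what the paper leaves tacit, not a different argument.
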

		\begin{proof}
			Choose a separable closure $k/k(x)$. By Lemma \ref{l:lang}, there exists $b \in W(k)$ such that $\wp(b) = f(\hat{x})$. The isomorphism
			\begin{equation*}
				W(k(x))/\wp W(k(x))	\to	\Hom(\Gal(k/k(x)),\Z_p)
			\end{equation*}
			is explicitly given by $f(\hat{x}) \mapsto (g \mapsto gb-b)$. Let $d = [k(x):\F_p]$. We see that
			\begin{equation*}
				\rho(\Frob_x)	=	F^db-b	=	\sum_{j=0}^{d-1} F^j(Fb-b)	=	\Tr_{k(x)/\F_p}	f(\hat{x}).
			\end{equation*}
		\end{proof}
		
		
	\subsection{Characters and Splitting Functions}\label{ss:split}
	
		In his proof of the rationality of the zeta function, Dwork uses the classical Artin-Hasse exponential to construct a ``splitting function'' for the $L$-functions of additive character sums over the torus $\G_m^d$. We will now use our notion of a flat lifting to construct general splitting functions over a smooth affine $\F_p$-scheme $X$.
		
		Let $R$ be a complete Noetherian local ring with maximal ideal $\frakm$ and residue field $\F_p$. Let $\chi:\Z_p \to R^\times$ be a continuous character. We associate to $\chi$ the parameter
		\begin{equation*}
			\pi_\chi	=	\chi(1)-1	\in \frakm.
		\end{equation*}
		Note that for any $a \in \Z_p$, we have $\chi(a) = (1+\pi_\chi)^a$ and thus $\chi \mapsto \pi_\chi$ gives a one-to-one correspondence between the set of $R$-valued characters of $\Z_p$ and $\frakm$. The functor sending $R \mapsto \frakm$ is representable, namely by the Iwasawa algebra $\Lambda = \Z_p\llbracket  T\rrbracket$. It follows that there is a universal \emph{$T$-adic character} $\chi_T:\Z_p \to \Lambda^\times$, which corresponds to the topologically nilpotent element $T \in \Lambda$.
		
		Let $X = \Spec(\overline{A})$ be an $\F_p$-scheme, and let $(A,\sigma)$ be a flat lifting of $X$ over $\Z_p$. For every character $\chi:\Z_p \to R^\times$ as above, we define a flat lifting $(A_{\pi_\chi},\sigma)$ over $R$, where
		\begin{equation*}
			A_{\pi_\chi} = R \otimes_{\Z_p} A,
		\end{equation*}
		and $\sigma:A_{\pi_\chi} \to A_{\pi_\chi}$ is the $R$-linear lifting of Frobenius obtained by base change. For every closed point $x \in |X|$, let us write $R(x) = R \otimes_{\Z_p} W(k(x))$. The Teichm\"uller map $\hat{x}$ induces a Frobenius-compatible map
		\begin{equation*}
			\hat{x}:A_{\pi_\chi}	\to	R(x).
		\end{equation*}
		Given $E \in A_{\pi_\chi}^\infty$, we will write $E(\hat{x}) \in R(x)$ for the ``value'' of $E$ at $\hat{x}$.
		
		\begin{definition}
			Let $f \in A^\infty$. A \emph{splitting function} for $f$ at $\chi$ is an element $E \in A_{\pi_\chi}^\infty$ 
			satisfying
			\begin{equation*}
				(1+\pi_\chi)^{\Tr_{k(x)/\F_p} f(\hat{x})}	= \Nm_{k(x)/\F_p} E(\hat{x})
			\end{equation*}
			for all $x \in |X|$. Here, we identify the Galois group of $R(x)/R$ with that of $k(x)/\F_p$.
		\end{definition}
		
		\begin{remark}
			Let $\rho:\pi_1(X) \to \Z_p$ be the map corresponding to $f$ by Artin-Schreier-Witt theory. Suppose that $R$ is a discrete valuation ring. In the terminology of \cite[\S 5.1]{Kramer-Miller-Upton}, a splitting function for $f$ at $\chi$ is a Frobenius structure for the unit-root $\sigma$-module over $A_{\pi_\chi}^\infty$ corresponding to $\rho_\chi = \chi \circ \rho$. If $f$ admits a splitting function at $\chi$, then this $\sigma$-module is free of rank $1$.
		\end{remark}
		
		Let us write $(A_T,\sigma)$ for the flat lifting corresponding to the $T$-adic character $\chi_T$. We claim that every $f \in A^\infty$ admits a splitting function $E_f(T) \in A_T^\infty$ at $\chi_T$. By specialization, we obtain a splitting function at every character $\chi:\Z_p \to R^\times$ as above. Recall that the \emph{Artin-Hasse exponential series} is the power series
		\begin{equation*}
			E(t)	=	\exp\left(	\sum_{i = 0}^\infty \frac{t^{p^i}}{p^i}	\right)	=	\prod_{p \nmid n} (1-t^n)^{-\mu(n)/n} \in 1+t+t^2\Z_p\llbracket  t\rrbracket.
		\end{equation*}
		Here, $\mu$ denotes the M\"obius function.
		
		\begin{lemma}[{\cite[Lemma 2.1]{KZ}}]\label{l:ahseries}
			For each $j \geq 0$, $E(t)$ induces a bijection $T^j\Lambda \to 1+T^j\Lambda$. Moreover, for all $r \in \Z_p$ and all $j \geq 0$ we have
			\begin{equation*}
				E(rT^j + T^{j+1}\Lambda) = 1+rT^j + T^{j+1}\Lambda.
			\end{equation*}
		\end{lemma}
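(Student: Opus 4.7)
The plan is to reduce both assertions to the classical Dieudonn\'e--Dwork integrality statement $E(t) \in 1+t+t^2\Z_p\llbracket t\rrbracket$ combined with the Taylor-type expansion $E(t) = 1 + t + O(t^2)$. Once $p$-integrality of the coefficients of $E$ is in hand, the series can be evaluated on any $T$-adically topologically nilpotent element of $\Lambda$, and the leading linear term controls both the bijectivity and the first-order assertion essentially tautologically.

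Concretely, I would first cite Dieudonn\'e--Dwork to obtain $E(t) \in \Z_p\llbracket t\rrbracket$. Next, for $j \geq 1$ and $f \in T^j\Lambda$, the series $E(f) = \sum_n c_n f^n$ converges $T$-adically, and since $E(t) - 1 - t \in t^2\Z_p\llbracket t\rrbracket$ we get
\begin{equation*}
E(f) \in 1 + f + f^2\Lambda \subseteq 1 + T^j\Lambda,
\end{equation*}
together with the stronger congruence $E(f) \equiv 1 + f \pmod{T^{2j}\Lambda}$. For bijectivity, I would construct the compositional inverse $L(u)$ of the power series $E(t) - 1$. Because its linear coefficient is the unit $1$ and all higher coefficients lie in $\Z_p$, a standard iterative inversion (no denominators appear) produces $L(u) \in u + u^2\Z_p\llbracket u\rrbracket$. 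The assignment $g \mapsto L(g-1)$ then maps $1 + T^j\Lambda$ into $T^j\Lambda$ by the same convergence argument, and is a two-sided inverse to $f \mapsto E(f)$ by formal composition.

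The refined statement is then immediate: since $2j \geq j+1$ whenever $j \geq 1$, the congruence $E(f) \equiv 1 + f \pmod{T^{2j}\Lambda}$ specialises to $E(rT^j + T^{j+1}\Lambda) = 1 + rT^j + T^{j+1}\Lambda$. The only substantive ingredient is Dieudonn\'e--Dwork; everything else is bookkeeping with ideals. I expect the main subtlety to lie in interpreting the boundary case $j = 0$, for which one should work in the $(p,T)$-adic topology on $\Lambda$ rather than the purely $T$-adic one (otherwise $E(f)$ is not defined for arbitrary $f \in \Lambda$); once the topology is fixed, the same inversion argument applies verbatim.
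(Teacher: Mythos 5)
The paper gives no proof of this lemma at all: it is quoted verbatim from Kosters--Zhu (their Lemma 2.1), so there is nothing internal to compare against. Your argument is the standard one and is correct in every case the paper actually uses, namely $j \geq 1$: integrality of the Artin--Hasse coefficients together with $E(t) \in 1+t+t^2\Z_p\llbracket t\rrbracket$ gives $E(f) \equiv 1+f \pmod{f^2\Lambda}$ for $f \in T^j\Lambda$, and since $E(t)-1$ has unit linear coefficient its compositional inverse $L(u)$ lies in $u+u^2\Z_p\llbracket u\rrbracket$ with no denominators, which yields both the bijection $T^j\Lambda \to 1+T^j\Lambda$ and, via $2j \geq j+1$, the equality of cosets. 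Note that the only instance the paper needs is $j=1$, to define $\tau_j \in T\Lambda$ by $E(\tau_j) = (1+T)^{p^j}$.

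The one genuine defect is your treatment of $j=0$. Passing to the $(p,T)$-adic topology does not make $E(f)$ converge for arbitrary $f \in T^0\Lambda = \Lambda$: if $f$ is a unit of $\Lambda$, say $f=1$, then $f^n \not\to 0$ in any of these topologies, and the coefficients $c_n$ of $E$ do not tend to $0$ $p$-adically either. Indeed $E \bmod p$ is not a polynomial (in characteristic $p$ its logarithmic derivative is $\sum_{i \geq 0} t^{p^i-1}$, which has unbounded degree), so infinitely many $c_n$ are $p$-adic units; hence $\sum_n c_n f^n$ genuinely diverges and $E$ is simply not defined on all of $\Lambda$. The correct resolution is not a change of topology but a restriction of the index: the statement must be read with $j \geq 1$ (the ``$j\geq 0$'' in the displayed lemma is best regarded as a slip), which is harmless since every application in the paper has $\tau_j \in T\Lambda$. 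With that caveat your proof is complete.
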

		
		\begin{definition}
			For each $j \geq 0$, we let $\tau_j = \tau_j(T) \in T \Lambda$ be the unique series satisfying $E(\tau_j(T)) = (1+T)^{p^j}$.
		\end{definition}
		
		We are now ready to define our $T$-adic splitting functions. Recall from \cite[17.2.7]{Haze} that there is a functorial embedding of $\Z_p$-modules
		\begin{align*}
			W(A)	&\to	1+tA\llbracket t\rrbracket	\\
			a	&\mapsto	\prod_{j = 0}^\infty E(a_i t^{p^i}).
		\end{align*} 
		
		\begin{definition}
			For each $j \geq 0$, we define the $j$th \emph{Artin-Hasse exponential map} to be the composite
			\begin{equation*}
				p^j A		\xrightarrow{1/p^j}	A	\xrightarrow{D_\sigma}	W(A)	\to	1+tA\llbracket t\rrbracket	\xrightarrow{t \mapsto \tau_j}	1+\tau_j A\llbracket \tau_j\rrbracket \subseteq A_\Lambda^\infty.
			\end{equation*}
			
		\end{definition}
		
		Let us write $E_f^j = E_f^j(T)$ for the $j$th Artin-Hasse exponential of $f \in p^j A$. By construction, the assignment $f \mapsto E_f^j$ is $\Z_p$-linear and is functorial in the flat lifting $(A,\sigma)$.
			
		\begin{lemma}[Dwork's Splitting Lemma]\label{p:exp}
			Let $f \in p^j A^\infty$. Then $E_f^j(T)$ is a splitting function for $f$ at $\chi_T$.
		\end{lemma}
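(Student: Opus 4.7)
The plan is to unravel the definition of $E_f^j(T)$ into an explicit exponential formula, and then verify the splitting identity by a direct computation hinging on the defining property of $\tau_j$. Writing $g = f/p^j \in A^\infty$, the standard fact that the ``ghost'' logarithm of the embedding $W(A^\infty) \hookrightarrow 1+tA^\infty\llbracket t\rrbracket$ is $\sum_n w_n(\cdot)\,t^{p^n}/p^n$, combined with the characterization $w_n(D_\sigma(g)) = \sigma^n(g)$, yields the formula
\begin{equation*}
	E_f^j(T) = \exp\!\left( \sum_{n \geq 0} \frac{\sigma^n(f)\,\tau_j^{p^n}}{p^{n+j}} \right),
\end{equation*}
a priori in $A_\Lambda^\infty \otimes_{\Z_p} \Q_p$, with the understanding that the exponential in fact lies integrally in $A_\Lambda^\infty$ by virtue of the Artin--Hasse packaging $E_f^j = \prod_{i \geq 0} E(a_i \tau_j^{p^i})$ with $(a_i) = D_\sigma(g)$.

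Fix a closed point $x$ of $X$, put $d = [k(x):\F_p]$, and let $h = \hat{x}(f) \in p^j W(k(x))$. Frobenius-compatibility of $\hat{x}$ gives $\hat{x}(\sigma^n f) = F^n(h)$, and since $\tau_j \in T\Lambda$ is fixed by the $F$-action on the $W(k(x))$-factor of $\Lambda(x)$, the previous display specializes to an analogous formula for $\hat{x}(E_f^j)$. Computing the norm as a product of $F^i$-conjugates for $i = 0,\dots,d-1$ and taking logarithms, one obtains
\begin{equation*}
	\log \Nm_{k(x)/\F_p} \hat{x}(E_f^j) = \sum_{n \geq 0} \frac{\tau_j^{p^n}}{p^{n+j}} \cdot \sum_{i=0}^{d-1} F^{n+i}(h).
\end{equation*}
The decisive observation is that, because $F^d = \id$ on $W(k(x))$, for every fixed $n$ the inner sum is a reindexing of $\sum_{k=0}^{d-1} F^k(h) = \Tr_{k(x)/\F_p}(h)$, independently of $n$.

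It then remains only to identify the scalar series $\sum_{n \geq 0} \tau_j^{p^n}/p^{n+j}$ with $\log(1+T)$. This is immediate from the defining relation $E(\tau_j) = (1+T)^{p^j}$ of Lemma~\ref{l:ahseries} together with the series identity $\log E(t) = \sum_{n \geq 0} t^{p^n}/p^n$: take logs of both sides of the former and divide through by $p^j$. Exponentiating back, this gives $\Nm_{k(x)/\F_p}\hat{x}(E_f^j) = (1+T)^{\Tr_{k(x)/\F_p} f(\hat{x})} = (1+\pi_{\chi_T})^{\Tr_{k(x)/\F_p} f(\hat{x})}$, which is the splitting condition at $\chi_T$. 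The main obstacle along the way is purely bookkeeping---the $p$-adic convergence of the log/exp expressions and the interchange of summations---but this is harmless because $\tau_j$ is topologically nilpotent in $\Lambda$, so every series in sight converges in $A_\Lambda^\infty \otimes \Q_p$, and the integrality of the final answer is forced by the product form of $E_f^j$.
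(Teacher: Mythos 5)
Your proof is correct, and it proves the same identity as the paper but organizes the computation differently. The paper first uses functoriality to reduce to the value $f(\hat{x}) \in W(k(x))$, then uses the Teichm\"uller expansion together with $\Z_p$-linearity (and continuity) of $f \mapsto E_f^j$ to reduce to the monomial case $f(\hat{x}) = p^{j+k}[c]$, where the splitting identity becomes the classical one-line Dwork computation $E(\tau_j)^{p^k \Tr_{k(x)/\F_p}[c]} = \Nm_{k(x)/\F_p} E([c]\tau_j)^{p^k}$ --- itself verified by exactly the logarithmic reindexing you perform. You instead expand $E_f^j = \exp\bigl(\sum_n \sigma^n(f)\tau_j^{p^n}/p^{n+j}\bigr)$ via the ghost components of $D_\sigma$ and run the log-of-the-norm computation for a general $f$ in one pass. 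Both arguments rest on the same three ingredients: Frobenius-compatibility of $\hat{x}$, the reindexing of $\sum_{i=0}^{d-1} F^{n+i}(h)$ using $F^d = \id$, and the defining relation $E(\tau_j) = (1+T)^{p^j}$. Your route avoids the implicit limiting argument needed to pass from monomials to the full Teichm\"uller series, at the price of justifying the $\exp/\log$ formalism; you handle this adequately ($T$-adic convergence of each series, with the final comparison legitimized by injectivity of $\log$ on $1 + T\,W(k(x))\llbracket T \rrbracket$, the coefficient ring being $p$-torsion-free, and integrality supplied by the Artin--Hasse product form). No gaps.
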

		\begin{proof}
			By functoriality, we have that $E_f^j(\hat{x}) = E_{f(\hat{x})}^j \in \Lambda(x)$. Since $k(x)$ is perfect, we have a Teichm\"uller expansion
			\begin{equation*}
				f(\hat{x})	=	p^j([f_0]+[f_1]p+\cdots)	\in W(k(x)).
			\end{equation*}
			By $\Z_p$-linearity of the Artin-Hasse exponential map, it suffices to prove the statement when $f(\hat{x}) = p^{j+k}[c]$. Note that since $\sigma([c]) = [c]^p$, we have $E_{p^j[c]}^j = E([c] \tau_j)$. But then
			\begin{equation*}
				(1+T)^{\Tr_{k(x)/\F_p} p^{j+k}[c]}	=	E(\tau_j)^{p^k \Tr_{k(x)/\mathbb{F}_p}[c]}	=	\Nm_{k(x)/\mathbb{F}_p} E([c]\tau_j)^{p^k}	=	\Nm_{k(x)/\mathbb{F}_p}	E_{p^{j+k}[c]}^j.
			\end{equation*}
		\end{proof}
		
\section{\texorpdfstring{$\Z_p$}{Zp}-Towers of Local Fields}\label{s:local}

	In this section we let $F$ be a local field of characteristic $p$ with residue field $\F_q$. Let $t \in F$ be a uniformizer. Consider the flat lifting $(\calA,\sigma)$, where $\calA = \Z_q(\! (t)\! )$ and $\sigma:\calA \to \calA$ is the unique lifting of Frobenius such that $\sigma(t) = t^p$. Let $G$ denote the absolute Galois group of $F$, and let $f \in \calA^\infty$. By Artin-Schreier-Witt theory, $f$ determines a map
	\begin{equation}\label{eq:localtower}
		\rho:G	\to	\Z_p.
	\end{equation}
	For each $j \geq 0$, the open subgroup $p^j \Z_p \subseteq \Z_p$ corresponds to a finite extension of local fields $F_j/F$. We abbreviate the resulting tower by $F_\infty/F$.
	
	\subsection{Local Towers with Strictly Stable Monodromy}\label{ss:strictlystable}
	
		Our first goal is to characterize the stable monodromy condition (\ref{d:striclystablefields}) in terms of a suitable representative $\tilde{f} \in \calA^\infty$ of the tower $F_\infty/F$.
		
		\begin{proposition}\label{p:localtwist}
			Let $\beta \in \F_q$ such that $\Tr_{\F_q/\F_p}(\beta) \neq 0$. There is a unique $\tilde{f} \in \calA^\infty$ such that $f \equiv \tilde{f} \pmod{\wp \calA^\infty}$ and
			\begin{equation*}
				\tilde{f}	=	c[\beta]	+	\sum_{k = 1}^\infty	c_kt^{-k},
			\end{equation*}
			where $c \in \Z_p$, $c_k \in \Z_q$ with $c_k = 0$ whenever $p | k$, and $c_k \to \infty$ as $k \to \infty$.
		\end{proposition}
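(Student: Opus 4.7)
The strategy is to decompose $f = f^+ + a_0 + f^-$ by $t$-degree (positive, zero, strictly negative) and reduce each piece modulo $\wp\calA^\infty$ separately.

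For the positive part $f^+ = \sum_{n \geq 1} a_n t^n \in t\Z_q\llbracket t\rrbracket$, the relation $\sigma(t) = t^p$ makes $\sigma$ a $t$-adic contraction on $t\Z_q\llbracket t\rrbracket$, so that $h_+ := -\sum_{i \geq 0} \sigma^i(f^+)$ converges $t$-adically in $\Z_q\llbracket t\rrbracket \subseteq \calA^\infty$ and satisfies $\wp(h_+) = f^+$. For the constant term $a_0 \in \Z_q$, additive Hilbert 90 lifted to $\Z_q$ yields an isomorphism $\Z_q/\wp\Z_q \xrightarrow{\sim} \Z_p$ via $\Tr_{\Z_q/\Z_p}$; since $\Tr_{\F_q/\F_p}\beta \neq 0$, the element $\Tr[\beta]$ is a $p$-adic unit, and $c = \Tr(a_0)/\Tr[\beta]$ is the unique element of $\Z_p$ with $a_0 \equiv c[\beta] \pmod{\wp \Z_q}$. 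For the negative part $f^- = \sum_{k \geq 1} a_{-k} t^{-k}$, the identity $a t^{-pm} = \wp(\sigma^{-1}(a)t^{-m}) + \sigma^{-1}(a)t^{-m}$ lets us peel factors of $p$ off the exponent; iterating and regrouping, we define
\[
c_m = \sum_{v \geq 0} \sigma^{-v}(a_{-p^v m}) \in \Z_q \quad (p \nmid m),
\]
and verify $f^- \equiv \sum_{p \nmid m} c_m t^{-m} \pmod{\wp\calA^\infty}$. The $p$-adic convergence of each $c_m$ and the decay $v_p(c_m) \to \infty$ as $m \to \infty$ both follow from the estimate $v_p(c_m) \geq \inf_{n \leq -m} v_p(a_n)$ combined with the $\calA^\infty$ hypothesis $v_p(a_n) \to \infty$ as $n \to -\infty$.

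For uniqueness, suppose $h = \sum b_n t^n \in \calA^\infty$ satisfies $\wp(h) = (c - c')[\beta] + \sum_{p \nmid k}(c_k - c_k') t^{-k}$. Equating the $t^n$ coefficients of $\wp(h) = \sigma(h) - h$: an inductive sweep through $n \geq 1$ forces $b_n = 0$; at $n = 0$ we get $(c - c')[\beta] = \wp(b_0) \in \ker\Tr$, hence $c = c'$; at $n = -k$ with $p \mid k$ the recursion $b_{-k} = \sigma(b_{-k/p})$ forces $b_{-p^v k_0} = \sigma^v(b_{-k_0})$ for every $p \nmid k_0$ and $v \geq 0$, but $v_p(\sigma^v(b_{-k_0})) = v_p(b_{-k_0})$ is constant in $v$ while the $\calA^\infty$ decay requires $v_p(b_{-p^v k_0}) \to \infty$, so $b_{-k_0} = 0$; finally at $n = -k$ with $p \nmid k$, we read off $c_k - c_k' = -b_{-k} = 0$.

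The subtle point throughout is bookkeeping in the $p$-adic topology of $\calA^\infty$: one must confirm that the formal rearrangement producing each $c_m$ genuinely assembles into an element of $\calA^\infty$, and in the uniqueness step that no nontrivial $\sigma$-periodic coefficient pattern in $h$ can survive the $\calA^\infty$ decay hypothesis. Both points boil down to the $v_p$ estimate noted above.
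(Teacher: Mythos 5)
Your proof is correct, but it takes a genuinely different route from the paper's. The paper's argument is short and modular: it takes a $\Z_p$-basis $C$ of $\Z_q$, forms the set $B = \{[\beta]\}\sqcup\{ct^{-k}: c\in C,\ p\nmid k\}$, cites Kosters for the fact that the reduction of $B$ modulo $p$ is a basis of $F/\wp F$, and then invokes the successive-approximation lemma \cite[Lemma 2.3]{Kramer-Miller-Upton} (a map of $p$-adically complete, $p$-torsion-free modules that is an isomorphism mod $p$ is an isomorphism) to conclude that the completed map $\widehat{\bigoplus}_{b\in B}\Z_p \to \calA^\infty/\wp\calA^\infty$ is an isomorphism; existence and uniqueness of $\tilde f$ drop out simultaneously. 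You instead carry out the reduction integrally and by hand: the geometric-series inverse of $\wp$ on $t\Z_q\llbracket t\rrbracket$ (valid because $\sigma(t)=t^p$ makes $\sigma$ a $t$-adic contraction there), the identification $\Z_q/\wp\Z_q\cong\Z_p$ via the trace together with the unit $\Tr_{\Z_q/\Z_p}[\beta]$, and the peeling identity $at^{-pm}\equiv\sigma^{-1}(a)t^{-m}\pmod{\wp\calA^\infty}$ yielding $c_m=\sum_{v\ge 0}\sigma^{-v}(a_{-p^vm})$; uniqueness is a coefficientwise analysis of $\wp(h)$ in which the only danger, a $\sigma$-periodic chain $b_{-p^vk_0}=\sigma^v(b_{-k_0})$ of constant valuation, is killed by the decay condition defining $\calA^\infty$. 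The two points you flag as subtle (that the infinitely many witnesses for the negative-degree congruences sum to an element of $\calA^\infty$, and the periodic-chain argument) do check out via the stated estimate $v_p(c_m)\ge\inf_{k\ge m}v_p(a_{-k})$. What each approach buys: the paper's is shorter and outsources both the characteristic-$p$ computation and the lifting to known results, while yours is self-contained and makes the normal-form algorithm explicit---in particular it directly exhibits the relation between the $c_k$ and the original coefficients of $f$, which is in the spirit of how the paper later uses the expansion $\tilde f=\sum p^jF_j$ and Kosters' ramification formula.
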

		\begin{proof}
			Let $C$ be any basis for $\Z_q$ as a $\Z_p$-module, and let
			\begin{equation*}
				B	=	\{[\beta]\} \sqcup \{ct^{-k}:c \in C,p\nmid k\}.
			\end{equation*}
			The reduction of $B$ modulo $p$ forms a basis for $F/\wp F$ (see e.g. \cite[Example 2.4]{Kosters}). Since $\calA^\infty/\wp\calA^\infty$ is $p$-torsion free, it follows from \cite[Lemma 2.3]{Kramer-Miller-Upton} that the $p$-adic completion of the map
			\begin{align*}
				\bigoplus_{b \in B} \Z_p	&\to	\calA/\wp \calA	\\
				(c_b)_{b \in B}	&\mapsto	\sum_{b \in B} c_b b
			\end{align*}
			is an isomorphism.
		\end{proof}
		
		From now on we will fix a choice of $\beta$ as in Proposition \ref{p:localtwist} and let $c_0 = c[\beta]$. Note that the $p$-adic valuation of $c_0$ is independent of the choice of $\beta$. The $p$-adic growth of the coefficients $c_k$ is closely connected to the monodromy of $F_\infty/F$:
		
		\begin{proposition}[{\cite[Proposition 3.3]{Kosters}}]\label{p:vj}
			For all $j \geq 0$, let $v_j$ denote the highest ramification break (in upper numbering) of the finite extension $F_j/F$. Then
			\begin{equation*}
				v_j	=	\begin{cases}
						p^{j-1}\max\{	k p^{-v_p(c_k)}: v_p(c_k) < j	\}	&	\text{if }	v_p(c_k)	<	j	\text{ for some }	k	\\
						0	&	\text{otherwise}
					\end{cases}.
			\end{equation*}
		\end{proposition}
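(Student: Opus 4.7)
The plan is to translate the statement into the language of Witt vectors and then apply a classical formula of Schmid for the upper ramification breaks of Artin–Schreier–Witt extensions of local fields of characteristic $p$.

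First, I compute the image of $\tilde{f}$ under the Frobenius-compatible map $\bar{D}_\sigma:\calA^\infty \to W(\F_q(\!(t)\!))$ from \S\ref{ss:flatliftings}. Since $\bar{D}_\sigma$ is $\Z_p$-linear and continuous, the constant Teichmüller contribution $c[\beta]$ lies in $W(\F_q) \subset W(\F_q(\!(t)\!))$ and so defines an unramified sub-extension; it may be ignored in computing ramification breaks. For each nonzero monomial $c_k t^{-k}$ with $v_p(c_k) = m$, write $c_k = p^m u_k$ with $u_k \in \Z_q^\times$. The identity $D_\sigma(p^m u_k t^{-k}) = V^m D_\sigma(u_k t^{-k}) + \text{(higher corrections)}$, where $V$ is the Verschiebung, shows that the image of $c_k t^{-k}$ in $W(\F_q(\!(t)\!))$ has first nonzero component in position $m$, equal (up to a unit) to $\bar{u}_k t^{-k}$, and the contributions at positions $i > m$ have pole order $kp^{i-m}$ due to the Frobenius structure of $\bar{D}_\sigma$.

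Second, I reduce the total Witt vector modulo $\wp$ into Schmid's standard reduced form, in which each component has pole order coprime to $p$ (or is regular). The basic move is the relation $\wp(\bar{g}) \equiv \bar{g}^p - \bar{g} \pmod{\wp}$ in each slot, which iteratively eliminates pole orders divisible by $p$. The assumption $p \nmid k$ from Proposition \ref{p:localtwist} guarantees that each term $c_k t^{-k}$ contributes a pole of order $k$ at position $v_p(c_k)$ that is already in reduced form, and moreover that poles coming from distinct $k$'s do not mutually cancel (distinct numerators, coprime to $p$, cannot collapse into a common Frobenius orbit at the same Witt position). Combining the Witt-level expansions from the first step, the pole order of the reduced Witt vector at position $i$ is
\[
\max\bigl\{\, k\, p^{i - v_p(c_k)} : c_k \neq 0,\ v_p(c_k) \leq i\, \bigr\}.
\]

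Third, I invoke Schmid's classical formula: for a $\Z/p^j\Z$-extension of $F$ corresponding to a reduced Witt vector $(\bar{f}_0,\ldots,\bar{f}_{j-1})$, the highest upper ramification break is $\max_{0 \leq i < j} p^{j-1-i}\deg_\infty(\bar{f}_i)$. Plugging in the pole orders computed above, each monomial $c_k t^{-k}$ with $v_p(c_k) < j$ contributes $p^{j-1-v_p(c_k)} k = p^{j-1}\cdot k p^{-v_p(c_k)}$, and taking the maximum gives the claimed formula for $v_j$. If no such $k$ exists then all of $\tilde{f}$ is in $p^j \calA^\infty$, so the associated character has image in $p^j\Z_p$, and $F_j/F$ is trivial, giving $v_j = 0$.

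The main obstacle is the combinatorial bookkeeping of the second step: one must verify that carrying Witt arithmetic to the $p$-adic limit does not introduce unexpected cancellations when infinitely many coefficients $c_k$ have the same $p$-adic valuation $m$. This is handled by the uniqueness in Proposition \ref{p:localtwist} (which puts $\tilde{f}$ in canonical form with $p \nmid k$), together with the $p$-adic convergence $c_k \to 0$, which ensures that for any bound on $v_p(c_k) < j$ only finitely many terms are relevant at level $j$. The result is \cite[Proposition 3.3]{Kosters}, to which one may defer for the Witt-vector computation in detail.
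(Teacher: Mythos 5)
The paper offers no proof of this proposition at all: it is quoted verbatim from Kosters, with the citation standing in for the argument. Your sketch is the standard Schmid--Witt computation that underlies Kosters' Proposition 3.3 (pass to the Witt vector via $\overline{D}_\sigma$, reduce modulo $\wp$ to a form with pole orders prime to $p$, apply Schmid's formula for the highest upper break), and it is correct in outline, including the two points that actually require care: non-cancellation of the leading pole (guaranteed by $p\nmid k$) and finiteness of the relevant terms at each level (guaranteed by $v_p(c_k)\to\infty$).

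One intermediate claim is stated incorrectly, though it washes out. Before reduction modulo $\wp$, the first nonzero component of $\overline{D}_\sigma(p^m u_k t^{-k})$ does sit in position $m$, but it equals $\bar{u}_k^{p^m}t^{-kp^m}$ rather than $\bar{u}_k t^{-k}$: in characteristic $p$ one has $p^m = V^mF^m$, and Frobenius-compatibility of $\overline{D}_\sigma$ gives $\overline{D}_\sigma(p^m u_k t^{-k}) = V^m\overline{D}_\sigma(\sigma^m(u_k)t^{-kp^m})$. The pole order $k$ (prime to $p$) at position $m$ only emerges after the $\wp$-reduction you carry out in your second step. This does not affect the conclusion, because Schmid's formula weights position $i$ by $p^{j-1-i}$, so a pole of order $kp^{i-v_p(c_k)}$ at any position $i \geq v_p(c_k)$ contributes the same quantity $p^{j-1-v_p(c_k)}k$ to the maximum; deferring the detailed Witt arithmetic to Kosters, as you do, is exactly what the paper itself does.
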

		
		It follows from Proposition \ref{p:vj} that $F_\infty/F$ has $\delta$-stable monodromy for $\delta \in \Z[\tfrac{1}{p}]$ if and only if the sequence $kp^{-v_p(c_k)}$ attains its maximum at some $k_0$, and the maximal value is equal to $\delta$. We will now give another characterization which is more useful for making explicit estimates: For each $k \geq 0$, consider the Teichm\"uller expansion
		\begin{equation}\label{eq:teich}
			c_k	=	\sum_{j=0}^\infty [c_{j,k}] p^j \in \mathbb{Z}_q.
		\end{equation}
		The convergence condition on the $c_k$ guarantees that we may write
		\begin{equation}\label{eq:hatf}
			\tilde{f}	=	\sum_{j = 0}^\infty	p^j F_j	=	\sum_{j = 0}^\infty p^j	\sum_{k = 0}^{d_j}	[c_{j,k}]t^{-k},
		\end{equation}
		where $F_j$ is a polynomial of degree $d_j$ in $t^{-1}$.
		
		\begin{lemma}\label{l:djpj}
			The following are equivalent:
			\begin{enumerate}
				\item	$F_\infty/F$ has $\delta$-stable monodromy.
				\item	The sequence $(d_j/p^j)$ has a maximum, with the maximal value equal to $\delta$.
			\end{enumerate}	
		\end{lemma}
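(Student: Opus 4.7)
The plan is to combine Proposition~\ref{p:vj} with a careful bookkeeping of the Teichmüller digits: one observes that $d_j$ is precisely the largest $k$ such that the $j$-th Teichmüller digit $[c_{j,k}]$ of $c_k$ is nonzero, and then translates between the two quantities $k p^{-v_p(c_k)}$ and $d_j/p^j$.

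First, I would apply Proposition~\ref{p:vj} to reduce Condition (1) to the following: the sequence $\bigl(k p^{-v_p(c_k)}\bigr)_{k \geq 1}$ attains its maximum $\delta$ at some index $k_0$. (One must note in passing that if $\delta$-stable monodromy held with all $v_p(c_k) \geq j$ for every $j$, then the $v_j$ would eventually be $0$, contradicting $\delta > 0$; so the ``otherwise'' case of Proposition~\ref{p:vj} is irrelevant.) The task is then to show this condition is equivalent to $(d_j/p^j)$ attaining its maximum $\delta$.

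For the implication (1)$\Rightarrow$(2), I would argue as follows. Since $[c_{j,k}]$ is the $j$-th Teichmüller digit of $c_k$, the condition $[c_{j,k}] \neq 0$ forces $v_p(c_k) \leq j$. Writing $i = v_p(c_k) \leq j$ and using the hypothesis $k \leq \delta p^i$, we obtain $k \leq \delta p^j$, hence $d_j \leq \delta p^j$ for every $j$. Conversely, if $k_0$ realizes the maximum in Condition (1) with $v_p(c_{k_0}) = j_0$, then $[c_{j_0,k_0}] \neq 0$ (it is the leading Teichmüller digit of $c_{k_0}$, hence nonzero by definition), so $d_{j_0} \geq k_0 = \delta p^{j_0}$. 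Combined with the upper bound, $d_{j_0}/p^{j_0} = \delta$ and this is the maximum.

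For the converse (2)$\Rightarrow$(1), let $j_1$ be an index where $d_{j_1}/p^{j_1} = \delta$ is maximal, and set $k_0 = d_{j_1}$, so $[c_{j_1,k_0}] \neq 0$ and $i := v_p(c_{k_0}) \leq j_1$. Then $[c_{i,k_0}] \neq 0$, so $d_i \geq k_0 = \delta p^{j_1}$, giving $d_i/p^i \geq \delta p^{j_1 - i}$. By maximality, this forces $i = j_1$, so $k_0 p^{-v_p(c_{k_0})} = \delta$. For any other $k$ with $v_p(c_k) = i$, the digit $[c_{i,k}]$ is nonzero, whence $k \leq d_i \leq \delta p^i$ and $k p^{-v_p(c_k)} \leq \delta$. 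This establishes Condition (1).

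There is no serious obstacle here; the only subtlety worth flagging is the observation that if $v_p(c_k) = j$ then $[c_{j,k}]$ is automatically nonzero (as the leading Teichmüller digit), which is what lets the two directions pivot on the same index. The rest is a two-sided inequality argument on $\delta p^j$.
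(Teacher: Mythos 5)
Your proof is correct and takes essentially the same approach as the paper: both reduce condition (1) via Proposition \ref{p:vj} to the attainment of $\max_k kp^{-v_p(c_k)} = \delta$ and then exploit the fact that the leading Teichm\"uller digit $[c_{v_p(c_k),k}]$ is nonzero, the paper merely routing the bookkeeping through the auxiliary sequence $q_j = \max\{k : v_p(c_k) \le j\}$ where you compare the digits to $d_j$ directly. One small wording fix: in the final step the phrase ``for any other $k$ with $v_p(c_k) = i$'' should make explicit that $i$ is being redefined as $v_p(c_k)$ for each new $k$, so that the bound $kp^{-v_p(c_k)} \le \delta$ is verified for all $k$ and not only those with valuation equal to $j_1$.
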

		\begin{proof}
			Observe that if $v_p(c_k) > j$ then we must have $d_j < k$. In other words, the sequence $(d_j)$ is bounded above by the increasing sequence
			\begin{equation*}
				q_j = \max\{k:v_p(c_k) \leq j\}.
			\end{equation*}
			Moreover, we have $d_j = q_j$ whenever $j$ is the minimal index at which $d_j$ attains this value. It follows that $(d_j/p^j)$ has a maximum if and only if $(q_j/p^j)$ has a maximum, and that the two maxima agree. On the other hand, we have an upper bound
			\begin{align*}
				q_j	&=	p^j\max\{	kp^{-j}:v_p(c_k) < j+1	\}	\\
					&\leq	p^j\max\{	kp^{-v_p(c_k)}:v_p(c_k) < j+1	\}	\\
					&=	v_{j+1},
			\end{align*}
			with equality if and only if there exists some $k$ such that $v_p(c_k) = j$.
			
			First assume that $F_\infty/F$ has $\delta$-stable monodromy. Then the sequence $kp^{-v_p(c_k)}$ has maximal value $\delta$, so that $q_j \leq p^j \delta$ for all $j$. Let $k_0$ be the minimal value of $k$ such that $kp^{-v_p(c_k)} = \delta$, and let $j_0 = v_p(c_{k_0})$. Then $q_{j_0} = \delta p^{j_0}$ so that $q_j/p^j$ attains its maximal value $\delta$ when $j = j_0$.
			
			Conversely, suppose that $(d_j/p^j)$ has a maximum with maximal value $\delta$. Let $j_0$ be the smallest value of $j$ such that $q^j/p^j = \delta$, and let $k_0 = q_{j_0} = p^{j_0}\delta$. Then by the definition of $(q_j)$ we have $j_0 = v_p(c_{k_0})$, so that $q_{j_0} = v_{j_0+1}$. To complete the proof, it suffices to show that for all $k > k_0$ the value $kp^{-v_p(c_k)}$ is bounded by $\delta$. To see this, let $j = v_p(c_k)$. Since $k > k_0$, we have $j \geq j_0$ and since $v_p(c_k) < j+1$ we have
			\begin{equation*}
				k \leq q_j \leq p^j \delta.
			\end{equation*}
			Finally, we see that
			\begin{equation*}
				kp^{-v_p(c_k)} \leq p^j \delta p^{-v_p(c_k)} = \delta.
			\end{equation*}
		\end{proof}
		
		Our next goal is to construct a local splitting function with good $\pi_\chi$-adic growth properties. As in \ref{ss:split}, we first define a splitting function at the $T$-adic character $\chi_T$ and then specialize to obtain splitting functions at all other $\chi$.
		
		\begin{definition}
			The \emph{local splitting function} of the tower $F_\infty/F$ is defined to be
			\begin{equation}\label{eq:localsplitting}
				\tilde{E}(T)	=	\prod_{j = 0}^\infty	 E_{F_j}^j(T)	=	\prod_{j = 0}^\infty	\prod_{k = 0}^{d_j}	E([c_{j,k}]\tau_j(T) t^{-k})
			\end{equation}
		\end{definition}
		
		If $\chi$ is any continuous character of $\Z_p$, then we let $\tilde{E}(\pi_\chi) \in R$ denote the specialization of $\tilde{E}(T)$ along $T \mapsto \pi_\chi$. To study the $\pi_\chi$-adic convergence properties of $\tilde{E}(\pi_\chi)$, we recall our conventions on growth condtions from \cite[\S 2.1]{Kramer-Miller-Upton}. For every real number $m > 0$, consider the subring
		\begin{equation*}
			\calA_{\pi_\chi}^m	=	\left\{	\sum_{k = \infty}^\infty a_k t^{-k}:v_{\pi_\chi}(a_k) \geq \frac{k}{m}	\right\}.
		\end{equation*}
		Each $\calA_{\pi_\chi}^m$ is a $\pi_\chi$-adically complete subring of $\calA_{\pi_\chi}^\infty$. We define $\calA_{\pi_\chi}^\dagger$ to be the union of the $\calA_{\pi_\chi}^m$. Note in particular that $E_{F_j}^j(\pi_\chi) \in \calA_{\pi_\chi}^m$, where $m = d_j/v_{\pi_\chi}(\tau_j(\pi_\chi))$.
		
		\begin{lemma}\label{l:pij}
			Suppose that $\chi$ is finite or that $R$ has characteristic $p$. Then for all $j \geq 0$,
			\begin{equation*}
				v_{\pi_\chi}(\tau_j(\pi_\chi))	=	\begin{cases}
					p^j	&	0 \leq j < m_\chi	\\
					\infty	&	j \geq m_\chi
				\end{cases}.
			\end{equation*}
		\end{lemma}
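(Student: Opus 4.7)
The plan is to reduce the computation of $v_{\pi_\chi}(\tau_j(\pi_\chi))$ to the valuation of $(1+\pi_\chi)^{p^j}-1$, then compute the latter directly by expanding the binomial. The key reduction is that for any $t$ in the maximal ideal of $R$, the Artin--Hasse series satisfies
\begin{equation*}
    v_{\pi_\chi}(E(t)-1) \;=\; v_{\pi_\chi}(t),
\end{equation*}
because $E(t)=1+t+\sum_{k\ge 2}c_kt^k$ with $c_k\in\Z_p$: all higher terms contribute valuation at least $2v_{\pi_\chi}(t)$, so the leading $t$ dominates. In particular $E$ is injective on $\mathfrak{m} R$. Specializing the defining relation $E(\tau_j(T))=(1+T)^{p^j}$ at $T=\pi_\chi$ and applying this observation gives $v_{\pi_\chi}(\tau_j(\pi_\chi))=v_{\pi_\chi}((1+\pi_\chi)^{p^j}-1)$.

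With this reduction in hand, I would split into cases. If $\chi$ is finite of order $p^{m_\chi}$ and $j\ge m_\chi$, then $(1+\pi_\chi)^{p^j}=\chi(p^j)=1$, so the right-hand side is $0$ and the valuation is $\infty$. If $R$ has characteristic $p$, then the Frobenius identity $(1+\pi_\chi)^{p^j}=1+\pi_\chi^{p^j}$ gives valuation $p^j$ on the nose (and $m_\chi=\infty$, so only this case occurs). The substantive case is $\chi$ finite with $j<m_\chi$, where we expand
\begin{equation*}
    (1+\pi_\chi)^{p^j}-1 \;=\; \sum_{i=1}^{p^j}\binom{p^j}{i}\pi_\chi^i,
\end{equation*}
and each summand has valuation $e_\chi(j-v_p(i))+i$ (using $v_p\binom{p^j}{i}=j-v_p(i)$ for $1\le i\le p^j$).

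The main computational step is to show this valuation is uniquely minimized at $i=p^j$, with value $p^j$. Writing $i=p^ku$ with $\gcd(u,p)=1$ and $0\le k\le j$, the valuation is at least $g(k):=e_\chi(j-k)+p^k$, with equality iff $u=1$. Since $g(k+1)-g(k)=(p-1)(p^k-p^{m_\chi-1})$, the function $g$ is strictly decreasing on $\{0,\dots,m_\chi-1\}$, and because we are in the regime $j\le m_\chi-1$, the minimum on $\{0,\dots,j\}$ is uniquely attained at $k=j$, giving $g(j)=p^j$. All other summands have strictly larger $\pi_\chi$-adic valuation, so no cancellation occurs and $v_{\pi_\chi}((1+\pi_\chi)^{p^j}-1)=p^j$, completing the proof. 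I do not foresee any essential obstacle: the only content beyond bookkeeping is the monotonicity check for $g$, which is an elementary manipulation of $p$-adic binomial valuations, and the injectivity observation for $E$, which is immediate from its power series shape.
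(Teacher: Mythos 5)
Your proof is correct and follows essentially the same route as the paper: reduce $v_{\pi_\chi}(\tau_j(\pi_\chi))$ to $v_{\pi_\chi}\bigl((1+\pi_\chi)^{p^j}-1\bigr)$ via the shape of the Artin--Hasse series, then compute that valuation in the characteristic-$p$ and finite-order cases. The only difference is that you derive the cyclotomic valuation $v_{\pi_\chi}\bigl((1+\pi_\chi)^{p^j}-1\bigr)=p^j$ for $j<m_\chi$ by an explicit binomial expansion, whereas the paper simply invokes the standard fact that $1+\pi_\chi$ is a primitive $p^{m_\chi}$-th root of unity.
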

		\begin{proof}
			By Lemma \ref{l:ahexp}, we have
			\begin{equation*}
				v_{\pi_\chi}(\tau_j(\pi_\chi)) = v_{\pi_\chi}(E(\tau_j(\pi_\chi))-1) = v_{\pi_\chi}((1+\pi_\chi)^{p^j}-1).
			\end{equation*}
			If $R$ has characteristic $p$, then this is equal to $v_{\pi_\chi}(\pi_\chi^{p^j}) = p^j$ for all $j$. Otherwise, the result follows since $1+\pi_\chi$ is a $p^{m_\chi}$-root of unity.
		\end{proof}
		
		As an immediate consequence of Lemma \ref{l:djpj} and Lemma \ref{l:pij}, we obtain an
		estimate fore our local splitting function:
		
		\begin{theorem}\label{t:local}
			Suppose that $F_\infty/F$ has $\delta$-stable monodromy. If $\chi$ is equicharacteristic or finite, then $\tilde{E}_{\pi_\chi} \in \calA_{\pi_\chi}^\delta$.
		\end{theorem}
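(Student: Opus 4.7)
The strategy is a direct verification by reading off growth conditions from the definition \eqref{eq:localsplitting}, applying Lemma \ref{l:pij} factor-by-factor, and invoking Lemma \ref{l:djpj} to bound the resulting exponents uniformly by $\delta$. The only genuinely non-formal issue is ensuring convergence of the infinite product in the equicharacteristic case.

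First, I will analyze a single block $E_{F_j}^j(\pi_\chi) = \prod_{k=0}^{d_j} E([c_{j,k}]\tau_j(\pi_\chi) t^{-k})$. Each inner factor is a power series in $[c_{j,k}]\tau_j(\pi_\chi) t^{-k}$ with coefficients in $\Z_p$, so the coefficient of $t^{-kn}$ has $\pi_\chi$-valuation at least $n \, v_{\pi_\chi}(\tau_j(\pi_\chi))$. Reading off the definition of $\calA_{\pi_\chi}^m$ in \cite{Kramer-Miller-Upton}, this places the factor in $\calA_{\pi_\chi}^{m}$ whenever $m \geq k/v_{\pi_\chi}(\tau_j(\pi_\chi))$. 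Taking the worst $k \leq d_j$ gives
\begin{equation*}
    E_{F_j}^j(\pi_\chi) \in \calA_{\pi_\chi}^{d_j/v_{\pi_\chi}(\tau_j(\pi_\chi))}.
\end{equation*}
By Lemma \ref{l:pij}, $v_{\pi_\chi}(\tau_j(\pi_\chi)) = p^j$ for all $j \geq 0$ when $\chi$ is equicharacteristic, and for all $j < m_\chi$ when $\chi$ is finite; while by Lemma \ref{l:djpj}, $\delta$-stable monodromy gives $d_j \leq \delta p^j$ for every $j$. Combining, each relevant $E_{F_j}^j(\pi_\chi)$ lies in $\calA_{\pi_\chi}^\delta$.

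Next I will handle the global product. When $\chi$ is finite of order $p^{m_\chi}$, Lemma \ref{l:pij} forces $\tau_j(\pi_\chi) = 0$ for $j \geq m_\chi$, so $E_{F_j}^j(\pi_\chi) = 1$ for such $j$ and the product \eqref{eq:localsplitting} truncates to a finite product in $\calA_{\pi_\chi}^\delta$, which is closed under multiplication. When $\chi$ is equicharacteristic, I observe that $E_{F_j}^j(\pi_\chi) \equiv 1 \pmod{\tau_j(\pi_\chi) \calA_{\pi_\chi}^\delta}$ and $v_{\pi_\chi}(\tau_j(\pi_\chi)) = p^j \to \infty$; since $\calA_{\pi_\chi}^\delta$ is $\pi_\chi$-adically complete, the infinite product converges there.

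The main obstacle, such as it is, lies in this last convergence step: one must check that $\calA_{\pi_\chi}^\delta$ (not merely $\calA_{\pi_\chi}^\infty$) is preserved in the limit, which comes down to the $\pi_\chi$-adic completeness noted in \S \ref{ss:strictlystable}. All other verifications are bookkeeping of valuations that is determined entirely by Lemmas \ref{l:djpj} and \ref{l:pij}, as the theorem's statement advertises.
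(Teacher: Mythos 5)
Your argument is correct and is exactly the one the paper intends: the paper records the membership $E_{F_j}^j(\pi_\chi) \in \calA_{\pi_\chi}^{d_j/v_{\pi_\chi}(\tau_j(\pi_\chi))}$ just before the theorem and then declares the result an immediate consequence of Lemmas \ref{l:djpj} and \ref{l:pij}, which is precisely your factor-by-factor bookkeeping. The only point where your write-up is slightly imprecise is the claim $E_{F_j}^j(\pi_\chi) \equiv 1 \pmod{\tau_j(\pi_\chi)\calA_{\pi_\chi}^\delta}$ (dividing by $\tau_j$ can fail to preserve $\calA_{\pi_\chi}^\delta$ when $d_j = \delta p^j$); the cleaner observation is that the partial products lie in the ring $\calA_{\pi_\chi}^\delta$, converge coefficientwise since $v_{\pi_\chi}(\tau_j(\pi_\chi)) \to \infty$, and the defining growth condition of $\calA_{\pi_\chi}^\delta$ is closed under coefficientwise limits.
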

		
	\subsection{Local Newton Polygons}\label{ss:localtoglobal}
	
		Let $\chi$ be a non-trivial $R$-valued character of $\Z_p$. Borrowing our terminology from \cite[\S 5.3]{Kramer-Miller-Upton}, we will say say that $\rho_\chi$ is \emph{overconvergent} if $\tilde{E}(\pi_\chi) \in \calA_{\pi_\chi}^\dagger$. More specifically, given a rational number $\delta > 0$ we will say that $\rho_\chi$ is \emph{$\delta$-overconvergent} if $\tilde{E}(\pi_\chi) \in \calA_{\pi_\chi}^\delta$. For example, Theorem \ref{t:local} states that if $F_\infty/F$ has $\delta$-stable monodromy, then $\rho_\chi$ is $\delta$-overconvergent if $\chi$ is finite or if $R$ has characteristic $p$.
		
		Let $U_p:\calA \to \calA$ be the local $U_p$-operator of \cite[\S 4.2]{Kramer-Miller-Upton}. If $\rho_\chi$ is overconvergent, then we define a local $p$-Dwork operator
		\begin{equation*}
			\tilde{\Theta}	=	U_p	\circ	\tilde{E}(\pi_\chi):A_{\pi_\chi}^\dagger \to A_{\pi_\chi}^\dagger,
		\end{equation*}
		The operator $\tilde{\Theta}$ is $R$-linear, and the iterate $\tilde{\Theta}_q = \tilde{\Theta}^{v_p(q)}$ is $R_q$-linear. The $R$-submodule
		\begin{equation*}
			\calA_{\pi_\chi}^{\dagger,\tr} = t^{-1}R_q\langle t^{-1}\rangle \cap \calA_{\pi_\chi}^\dagger
		\end{equation*}
		consisting of truncated overconvergent series in $t$ is invariant under the action of $\tilde{\Theta}$. By \cite[Theorem 2.1]{Monsky}, the action of $\tilde{\Theta}_q$ on the $K_q$-vector space $\calV_{\pi_\chi}^{\dagger,\tr} = K \otimes_R \calA_{\pi_\chi}^{\dagger,\tr}$ is nuclear.
		
		\begin{definition}
			The \emph{local normalized Newton polygon} of $\rho_\chi$ is defined to be
			\begin{equation*}
				\NP(\rho_\chi) = \NP_{\pi_{q,\chi}} (\tilde{\Theta}_q|\calV_{\pi_\chi}^{\dagger,\tr}).
			\end{equation*}
		\end{definition}
		
		\begin{definition}
			Let $\delta > 0$ be a rational number. We define the \emph{$\delta$-Hodge polygon} to be the infinite convex polygon with slope set
			\begin{equation*}
				\left\{	\frac{p-1}{\delta},\frac{2(p-1)}{\delta},...	\right\}.
			\end{equation*}
		\end{definition}
		
		\begin{proposition}
			If $\rho_\chi$ is $\delta$-overconvergent, then $\NP(\rho_\chi ) \succeq \HP(\delta)$.
		\end{proposition}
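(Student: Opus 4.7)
The plan is to bound the Newton polygon of $\det(1 - s \tilde{\Theta}_q \mid \calV_{\pi_\chi}^{\dagger,\tr})$ directly, by estimating matrix entries of $\tilde{\Theta}$ in the monomial basis. Working in the $R$-basis $\{t^{-j}\}_{j \geq 1}$ of $\calA_{\pi_\chi}^{\dagger,\tr}$ and writing $\tilde{E}(\pi_\chi) = \sum_{k \geq 0} E_k\, t^{-k}$, a direct computation with $U_p(\sum b_j t^{-j}) = \sum b_{pj} t^{-j}$ gives
\[
\tilde{\Theta}(t^{-j}) \;=\; \sum_{i \geq \lceil j/p \rceil} E_{pi-j}\, t^{-i},
\]
so the matrix entry is $\tilde{\Theta}_{i,j} = E_{pi-j}$ when $pi \geq j$, and $0$ otherwise. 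The $\delta$-overconvergence hypothesis $\tilde{E}(\pi_\chi) \in \calA_{\pi_\chi}^\delta$ is exactly the bound $v_{\pi_\chi}(E_k) \geq k/\delta$, so $v_{\pi_\chi}(\tilde{\Theta}_{i,j}) \geq (pi - j)/\delta$.

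Next I would promote these single-step bounds to bounds on minors. For any two $n$-element subsets $J,J' \subset \N$, writing $|J| := \sum_{k \in J} k$ and expanding $\det(\tilde{\Theta}_{J,J'})$ over bijections, every summand has exponent $p|J| - |J'|$, so
\[
v_{\pi_\chi}\bigl(\det(\tilde{\Theta}_{J,J'})\bigr) \;\geq\; \frac{p|J| - |J'|}{\delta}.
\]
With $a = v_p(q)$, I would then apply Cauchy-Binet iteratively to expand a principal minor of $\tilde{\Theta}_q = \tilde{\Theta}^a$:
\[
\det\bigl((\tilde{\Theta}^a)_{I,I}\bigr) \;=\; \sum_{J_1,\dots,J_{a-1}} \det(\tilde{\Theta}_{I,J_1})\,\det(\tilde{\Theta}_{J_1,J_2})\cdots \det(\tilde{\Theta}_{J_{a-1},I}).
\]
The valuation bounds telescope to give each summand valuation at least $\tfrac{p-1}{\delta}\bigl(|I| + |J_1| + \cdots + |J_{a-1}|\bigr)$, and the trivial bound $|J| \geq 1 + 2 + \cdots + n = n(n+1)/2$ for each $n$-element $J \subset \N$ shows every principal minor has $v_{\pi_\chi} \geq \tfrac{a(p-1)n(n+1)}{2\delta}$.

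Consequently the $n$-th Fredholm coefficient $c_n = (-1)^n \sum_{|I|=n} \det((\tilde{\Theta}^a)_{I,I})$ satisfies $v_{\pi_\chi}(c_n) \geq \tfrac{a(p-1)n(n+1)}{2\delta}$, and dividing by $a$ to convert to $\pi_{q,\chi}$-adic valuation gives
\[
v_{\pi_{q,\chi}}(c_n) \;\geq\; \frac{(p-1)\,n(n+1)}{2\delta} \;=\; \sum_{j=1}^n \frac{j(p-1)}{\delta}.
\]
The right-hand side is convex in $n$ and equals the height of $\HP(\delta)$ at the integer $n$, so the Newton polygon of $\det(1 - s \tilde{\Theta}_q)$ passes weakly above $\HP(\delta)$ at every integer, yielding $\NP(\rho_\chi) \succeq \HP(\delta)$.

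The main technical point requiring care is justifying the iterated Cauchy-Binet expansion in this infinite-dimensional context, i.e., the $\pi_\chi$-adic convergence and absolute rearrangement of the $(a-1)$-fold sum over subsets $J_l$. This should follow from the nuclearity of $\tilde{\Theta}_q$ (via \cite[Theorem 2.1]{Monsky}) together with the quadratic growth of our valuation lower bounds as the $|J_l|$ grow. Once that bookkeeping is in place, the rest is the standard Dwork-style lower bound for the Fredholm series of a Dwork operator, transported to the $\pi_\chi$-adic setting via the overconvergence hypothesis on the splitting function.
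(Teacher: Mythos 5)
Your proposal is correct, and it is worth noting that the paper itself offers no proof of this proposition: it is stated as a known estimate, imported from the prequel \cite{Kramer-Miller-Upton} and from Kosters--Zhu (compare the citation of \cite[Proposition 5.7]{KZ} later in the same subsection). The argument you give --- matrix coefficients $\tilde{\Theta}_{i,j}=E_{pi-j}$ in the monomial basis, the bound $v_{\pi_\chi}(E_k)\geq k/\delta$ from $\delta$-overconvergence, telescoping via Cauchy--Binet to $v_{\pi_\chi}\geq \tfrac{p-1}{\delta}(|I|+|J_1|+\cdots+|J_{a-1}|)$, and the minimality bound $|J|\geq n(n+1)/2$ --- is precisely the standard Dwork-style estimate underlying those references, so you have in effect supplied the omitted proof rather than found a different one. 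One bookkeeping point to fix: the local $U_p$-operator is $\sigma^{-1}$-semilinear over $\Z_q$ (it is only $R$-linear, not $R_q$-linear), so the entries of $\tilde{\Theta}$ are $\sigma^{-1}(E_{pi-j})$ and the expansion of the minors of $\tilde{\Theta}_q=\tilde{\Theta}^{v_p(q)}$ carries Frobenius twists on the intermediate factors; since $\sigma$ is an isometry for $v_{\pi_\chi}$ this changes none of your valuation bounds, but the plain matrix-product form of Cauchy--Binet as you wrote it is not literally the identity being used. Your convergence remarks are also fine: for fixed cardinality $n$ there are only finitely many subsets $J$ with $|J|\leq N$, so the sums over the $J_l$ converge $\pi_\chi$-adically and may be rearranged freely in the ultrametric setting.
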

		
		\begin{theorem}\label{t:KZ}
			Suppose that $F_\infty/F$ is a $\Z_p$-tower of local fields with $\delta$-stable monodromy. Let $m_0$ be the smallest non-negative integer such that $v_j = p^{j-1} \delta$ for all $j > m_0$. Let $r > 0$. The following are equivalent:
			\begin{enumerate}
				\item	For some $\chi:\Z_p \to R^\times$ which is equicharacteristic or finite with $m_\chi > m_0$, the polygons $\NP^{< r}(\rho_\chi)$ and $\HP^{< r}(\delta)$ have the same terminal point.
				\item	For every $\chi:\Z_p \to R^\times$ which is equicharacteristic or finite with $m_\chi > m_0$, the polygons $\NP^{< r}(\rho_\chi)$ and $\HP^{< r}(\delta)$ have the same terminal point.
			\end{enumerate}
		\end{theorem}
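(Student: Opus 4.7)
The strategy is to extend the Kosters--Zhu analysis of characteristic series to include the equicharacteristic character, by showing that both the finite and equicharacteristic terminal-point conditions reduce to the same universal Weierstrass condition on a single coefficient of a $T$-adic characteristic series.

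The starting point is to construct a universal Dwork operator $\tilde{\Theta}_T = U_p \circ \tilde{E}(T)$ from the local splitting function (\ref{eq:localsplitting}). The $\delta$-stability hypothesis combined with Lemma \ref{l:djpj} shows that $\tilde{E}(T)$ has the appropriate overconvergence (the $T$-adic analogue of Theorem \ref{t:local}), so that the operator specializes under $T \mapsto \pi_\chi$ to $\tilde{\Theta}_\chi$ for every equicharacteristic or finite $\chi$. By \cite[Theorem 2.1]{Monsky}, the iterate $\tilde{\Theta}_T^{v_p(q)}$ has a well-defined characteristic series
\[
C(T,s) = \sum_{i \geq 0} c_i(T) s^i \in \Lambda\llbracket s \rrbracket,
\]
which specializes to the characteristic series of $\tilde{\Theta}_\chi^{v_p(q)}$ at every admissible $\chi$.

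Next, I would reduce the terminal-point condition to a statement about a single coefficient. Let $N$ denote the number of slopes of $\HP(\delta)$ strictly less than $r$ and let $H_N$ be their sum. The Hodge bound of \cite{Kramer-Miller3} gives $v_{\pi_{q,\chi}}(c_i(\pi_\chi)) \geq H_i$ for every $i$, and by convexity of the Hodge polygon the polygons $\NP^{<r}(\rho_\chi)$ and $\HP^{<r}(\delta)$ share their terminal point $(N,H_N)$ if and only if $v_{\pi_{q,\chi}}(c_N(\pi_\chi)) = H_N$.

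The heart of the argument is a Weierstrass analysis of $c_N(T)$: writing $c_N(T) = p^a u(T) P(T)$ with $u(T) \in \Lambda^\times$ and $P(T)$ distinguished of degree $d$, one computes $v_{T_q}(c_N(T) \bmod p) = d/v_p(q)$ when $a = 0$ and $+\infty$ when $a \geq 1$, so the equicharacteristic terminal-point condition is equivalent to $a = 0$ and $d = H_N v_p(q)$. For finite $\chi$ with $m_\chi > m_0$ large enough that $e_\chi > d$, the unit $u(\pi_\chi)$ contributes nothing, the leading term $\pi_\chi^d$ of $P(\pi_\chi)$ dominates the subleading terms (whose coefficients are $p$-divisible), and $p^a$ contributes $a \cdot e_\chi$; hence $v_{\pi_{q,\chi}}(c_N(\pi_\chi)) = (a e_\chi + d)/v_p(q)$, so the finite terminal-point condition becomes $a e_\chi + d = H_N v_p(q)$. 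Combining this with the Kosters--Zhu result \cite{KZ} (which says the finite condition is independent of $\chi$), or equivalently by varying $m_\chi$ over multiple values, forces $a = 0$ and $d = H_N v_p(q)$. Both (1) and (2) thus reduce to the same universal condition on the Weierstrass data of $c_N(T)$, yielding the equivalence.

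The main obstacle will be the Weierstrass analysis in the finite case when only condition (1) is assumed: from the single equation $a e_\chi + d = H_N v_p(q)$ one cannot immediately conclude $a = 0$, since $a \geq 1$ is permitted for tuned values of $d$. This gap is filled by invoking the original Kosters--Zhu theorem for finite characters, which guarantees that if the terminal-point condition holds for one such $\chi$, it holds for all, whereupon the variation of $e_\chi = p^{m_\chi - 1}(p-1)$ over the infinite family $\{m_\chi > m_0\}$ forces the Weierstrass data to take the universal form. A secondary technical point is ensuring $e_\chi > d$ for all admissible finite $\chi$; this is automatic once $m_0$ is enlarged so that $p^{m_0}(p-1) > H_N v_p(q)$, and the a priori bound $d \leq H_N v_p(q)$ from the Hodge bound ensures that any such enlargement is compatible with the theorem's statement.
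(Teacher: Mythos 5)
Your proposal is correct in substance and lands on the same underlying principle as the paper's proof, but it executes the key step differently. The paper simply imports the Kosters--Zhu machinery wholesale: their generic valuation ring $R_\delta$, the evaluation maps $\mathrm{ev}_\chi$, the inequality $v_{\pi_\chi}(\mathrm{ev}_\chi(x)) \geq v_\delta(x)$, and above all their Lemma~6.1 (``equality at one finite $\chi$ of large order iff equality at all''), observing only that the proof of that lemma goes through verbatim for characters valued in a characteristic-$p$ discrete valuation ring. You instead make the equicharacteristic extension explicit: you work with the $T$-adic characteristic series $C(T,s)$ over $\Z_q\llbracket T\rrbracket$ (note the coefficients live over $\Z_q$, not $\Z_p$, though Weierstrass preparation is unaffected), reduce the terminal-point condition to the valuation of the single coefficient $c_N$, and compute $v_{\pi_{q,\chi}}(c_N(\pi_\chi))$ directly from the Weierstrass data $(a,d)$ of $c_N(T)=p^a u(T)P(T)$. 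This is essentially a hands-on re-derivation of the Kosters--Zhu valuation lemma together with its characteristic-$p$ analogue, and it correctly reduces both (1) and (2) to the single condition $a=0$, $d=H_Nv_p(q)$. Like the paper, you still lean on the original Kosters--Zhu theorem to pass from one finite character to all finite characters (unavoidable, since a single relation $ae_\chi+d=H_Nv_p(q)$ does not determine $a$, and your Weierstrass computation only applies when $e_\chi>d$), so the two proofs have the same external dependencies.

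Two small points to repair, neither load-bearing. First, the claimed a priori bound $d\leq H_Nv_p(q)$ does not follow from the Hodge bound: the Hodge bound gives $v_{T_q}(c_N\bmod p)\geq H_N$, which is a \emph{lower} bound $d\geq H_Nv_p(q)$ (when $a=0$), not an upper one. Second, ``enlarging $m_0$'' is not compatible with the statement, since the theorem quantifies over all $\chi$ with $m_\chi>m_0$ for the specific $m_0$ defined there; finite characters with $m_0<m_\chi$ but $e_\chi\leq d$ are not covered by your Weierstrass computation. Both issues evaporate if you structure the argument as you do in the main body: use the Weierstrass computation only for a single finite $\chi$ with $e_\chi$ larger than the (finite, fixed) degree $d$, and let Kosters--Zhu propagate the conclusion to every finite $\chi$ with $m_\chi>m_0$; the equicharacteristic case then needs no restriction on $e_\chi$ at all.
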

	
		\begin{proof}
			This theorem more or less follows from the theory developed in \cite{KZ}. However, \cite{KZ} ignores equicharacteristic characters. Thus, we believe it
			is beneficial to outline their work and explain the extra observations needed to handle this case. In
			\cite{KZ}, Kosters and Zhu construct a ``very generic'' discrete valuation ring $R_\delta$ with valuation $v_\delta$ (note that they denote this ring by $R$). 
			For each $\chi$ there is an evaluation map $\mathrm{ev}_\chi: R_\delta \to R_\chi$. If $\chi$ is a finite character, then valuations
			``go up'' under the evaluation map
			\begin{align}\label{eq: kosters-zhu valuation obs 1}
				v_{\pi_\chi}(\mathrm{ev}_{\chi}(x)) &\geq v_\delta(x)
			\end{align}
			for all $x \in R_\delta$. 
			Furthermore, the following holds:
			\begin{align} \label{eq: kosters-zhu valuation obs 2}
				\begin{array}{l}
				\text{For some finite character } \\
				\text{$\chi$ with $m_{\chi}> m_0$ such that}\\
				\text{$v_{\pi_\chi}(\mathrm{ev}_\chi(x))= v_\delta(x)$ }
				\end{array}  &\iff \begin{array}{l}
				\text{For every finite character } \\
				\text{$\chi$ with $m_{\chi}> m_0$ we have}\\
				\text{$v_{\pi_\chi}(\mathrm{ev}_\chi(x))= v_\delta(x)$ }
				\end{array} 
			\end{align}
			This is Lemma 6.1 in \cite{KZ}. The proof of this Lemma extends without modification to include any character with values in a discrete valuation ring of characteristic $p$.

			Kosters and Zhu then construct a generic ``characteristic series'' $C(\pi,s) = 1+sR_\delta\llbracket s \rrbracket$ which satisfies a certain entireness property with respect to $v_\delta$, as well as the interpolation property
			\begin{equation*}
				\mathrm{ev}_\chi C(\pi,s) = C(\tilde{\Theta}_q|\calV_{\pi_\chi}^{\tr,\dagger},s)
			\end{equation*}
			for any non-trivial $\chi$. The entireness property of $C(\pi,s)$ allows us to define a Newton polygon $\NP C(\pi,s)$ using the normalized valuation $v_p(q) v_\delta$. The main estimate of Kosters-Zhu \cite[Proposition 5.7]{KZ} states that
			\begin{equation*}
				\NP C(\pi,s) \succeq \HP(\delta).
			\end{equation*}
			Now if $\chi$ is finite or $R$ has characteristic $p$, then \eqref{eq: kosters-zhu valuation obs 1} guarantees that $\NP(\rho_\chi) \succeq \NP C(\pi,s)$. The theorem follows immediately from \eqref{eq: kosters-zhu valuation obs 2}.
			
		\end{proof}
		
		\begin{corollary}\label{c:localperiodicity}
			In the setting of Theorem \ref{t:KZ}, suppose that $\chi$ equicharacteristic or finite with $m_\chi > m_0$. Let $d = p^{m_0}\delta$. For all $n \geq 1$, the polygons $\NP(\rho_\chi^\ext)$ and $\HP(\delta)$ agree on the interval $[nd-1,nd]$.
		\end{corollary}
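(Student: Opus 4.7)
The plan is to deduce the corollary from Theorem \ref{t:KZ} applied at two carefully chosen truncation levels. The key preliminary observation is that $\NP(\rho_\chi)$ is the Newton polygon of the characteristic series of a nuclear operator on a $p$-adic Banach space, so it is piecewise linear with breakpoints only at integer $x$-coordinates. Consequently, to show agreement with $\HP(\delta)$ on $[nd-1,nd]$, it will suffice to show that the two polygons coincide at the integer endpoints $x = nd-1$ and $x = nd$; then the single linear segment of $\NP(\rho_\chi)$ on this interval is forced to have slope $\tfrac{nd(p-1)}{\delta} = np^{m_0}(p-1)$, matching the $\HP(\delta)$ segment exactly.

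Next, I would choose two truncation levels for Theorem \ref{t:KZ}. The first is $r_1 = np^{m_0}(p-1)$, which is exactly the $nd$-th slope of $\HP(\delta)$; the truncation $\HP^{<r_1}(\delta)$ terminates at $(nd-1,h_{nd-1})$, where $h_k$ denotes the height of $\HP(\delta)$ at position $k$. The second is any $r_2$ strictly between $np^{m_0}(p-1)$ and $\tfrac{(nd+1)(p-1)}{\delta}$; then $\HP^{<r_2}(\delta)$ terminates at $(nd,h_{nd})$. Theorem \ref{t:KZ} applied at each of $r_1,r_2$ shows that the corresponding terminal-point condition holds for every eligible $\chi$ if and only if it holds for some eligible $\chi$.

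The positive input required---that the terminal-point condition does hold for at least one eligible $\chi$ at both $r_1$ and $r_2$---is the main obstacle, and is supplied by the original periodic touching estimates of Kosters-Zhu in \cite{KZ}, valid for finite characters of sufficiently large order. Once this is in hand, the equivalence of Theorem \ref{t:KZ} propagates the terminal-point conditions to every $\chi$ which is equicharacteristic or finite with $m_\chi > m_0$. Combining the two conditions shows that $\NP(\rho_\chi)$ passes through both $(nd-1,h_{nd-1})$ and $(nd,h_{nd})$; the initial reduction then yields the desired agreement on $[nd-1,nd]$. The role of Theorem \ref{t:KZ} in this argument is precisely to bootstrap the Kosters-Zhu touching from the case of large-order finite $\chi$ to the full range of eligible characters, including the equicharacteristic character $\chi_0$.
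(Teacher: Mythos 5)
Your proof is correct and follows essentially the same route as the paper: both arguments use Theorem \ref{t:KZ} to propagate the terminal-point (touching) condition at the truncations bounding the interval $[nd-1,nd]$ from one known instance to every eligible $\chi$, and then conclude by convexity/linearity of the polygons on that interval. The only difference is the source of the base case --- the paper reduces to a finite character of order exactly $m_0+1$ and cites \cite[Theorem 7.10]{Kramer-Miller-Upton}, whereas you invoke the Kosters--Zhu periodic touching for finite characters of sufficiently large order --- and both are legitimate inputs.
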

		\begin{proof}
			By Theorem \ref{t:KZ}, it suffices to prove the statement for a finite character $\chi$ of order $m_\chi = m_0 + 1$. This is \cite[Theorem 7.10]{Kramer-Miller-Upton}.
		\end{proof}
		
		\begin{corollary}\label{c:deltaHodge}
			In the setting of Theorem \ref{t:KZ}, suppose that $\chi$ equicharacteristic or finite with $m_\chi > m_0$. Then $\rho_\chi$ is $\pi_\chi$-adically $\delta$-Hodge.
		\end{corollary}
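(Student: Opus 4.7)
The plan is to derive the $\delta$-Hodge condition as an essentially direct consequence of the periodic touching established in Corollary \ref{c:localperiodicity}. The notion of ``$\pi_\chi$-adically $\delta$-Hodge'' is the local input condition, imported from \cite{Kramer-Miller-Upton}, that allows one to apply the local-to-global comparison: it requires that the Newton polygon $\NP(\rho_\chi)$ of the local Dwork operator $\tilde{\Theta}_q$ share an unbounded sequence of vertices with the Hodge polygon $\HP(\delta)$, with controlled spacing. This is precisely the structural content of Corollary \ref{c:localperiodicity}, so the proof amounts to checking that the two formulations match.

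First I would unpack the definition of ``$\delta$-Hodge'' from \cite{Kramer-Miller-Upton}, confirming that in the present setting it is satisfied as soon as the pair $(\NP(\rho_\chi),\HP(\delta))$ share a cofinal family of vertices located at horizontal positions that are multiples of $d = p^{m_0}\delta$. This is a purely notational translation between the two papers, and care must be taken that the normalization via $\pi_{q,\chi}$ used here matches the conventions of the prequel.

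Next, I would appeal directly to Corollary \ref{c:localperiodicity}: for every $n \geq 1$ the two polygons coincide on $[nd-1,nd]$, so in particular the point on $\HP(\delta)$ with abscissa $nd$ is a vertex shared with $\NP(\rho_\chi)$. Since this produces an unbounded sequence of shared vertices at regular intervals, the $\delta$-Hodge hypothesis is verified. The hypothesis ``$\chi$ equicharacteristic or finite with $m_\chi > m_0$'' is inherited verbatim from Corollary \ref{c:localperiodicity}, so no extra case analysis is required.

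The only real obstacle is the bookkeeping step of matching definitions: ensuring that the $\pi_{q,\chi}$-normalized Newton polygon and the $\delta$-Hodge polygon defined here agree, up to vertex-by-vertex identification, with the structures in \cite{Kramer-Miller-Upton} that define ``$\delta$-Hodge.'' Once this is in hand the corollary is immediate, and no further analytic input is needed beyond Corollary \ref{c:localperiodicity}.
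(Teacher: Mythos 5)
Your proposal is correct and follows essentially the same route as the paper: both deduce the $\delta$-Hodge condition from the periodic touching of Corollary \ref{c:localperiodicity} together with a criterion from the prequel (the paper cites \cite[Proposition 7.9]{Kramer-Miller-Upton}). The one imprecision is that being $\pi_\chi$-adically $\delta$-Hodge is by definition the equality of $\HP(\delta)$ with an intrinsically defined local Hodge polygon $\HP(\rho_\chi)$ from \cite[\S 7.2]{Kramer-Miller-Upton}, rather than literally the vertex-sharing property you describe; but that property is exactly the hypothesis of the prequel's criterion, so your argument is the intended one.
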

		\begin{proof}
			The statement means that the $\delta$-Hodge polygon $\HP(\delta)$ agrees with a certain local Hodge polygon $\HP(\rho_\chi)$ that we have defined in \cite[\S 7.2]{Kramer-Miller-Upton}. Since the polygons $\NP(\rho_\chi)$ and $\HP(\delta)$ meet periodically by Corollary \ref{c:localperiodicity}, the statement follows immediately from the criterion \cite[Proposition 7.9]{Kramer-Miller-Upton}
		\end{proof}
		
		\begin{theorem}\label{t:localEquality}
			In the setting of Theorem \ref{t:KZ}, suppose that $\chi$ equicharacteristic or finite with $m_\chi > 0$.  Then $\NP(\rho_\chi)$ and $\HP(\delta)$ agree if and only if:
			\begin{enumerate}
				\item	\label{i:m0=0}$m_0 = 0$, so that $\delta \in \Z$.
				\item	\label{i:congruence}$p \equiv 1 \pmod{\delta}$.
			\end{enumerate}
		\end{theorem}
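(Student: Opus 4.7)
The plan is to reduce to a single well-chosen finite character via Theorem \ref{t:KZ} and then invoke the classical Adolphson--Sperber estimate for Artin--Schreier exponential sums. As a preliminary observation, Theorem \ref{t:KZ} (applied at every rational $r > 0$) propagates the condition ``$\NP^{<r}(\rho_\chi)$ and $\HP^{<r}(\delta)$ share a terminal point'' between characters $\chi$ with $m_\chi > m_0$; combined with the general estimate $\NP(\rho_\chi) \succeq \HP(\delta)$, this shows that the full equality $\NP(\rho_\chi) = \HP(\delta)$ is essentially $\chi$-independent on the regime $m_\chi > m_0$. I would therefore test the equality at the single character $\chi_0$ of order $p^{m_0+1}$.

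For the ``if'' direction, assume $m_0 = 0$ and $p \equiv 1 \pmod \delta$. Then $\chi_0$ has order $p$, and by Lemma \ref{l:pij} the Artin--Hasse factors $E_{F_j}^j(\pi_{\chi_0})$ with $j \geq 1$ vanish identically, so $\tilde\Theta_q$ reduces to the Frobenius matrix of a classical Artin--Schreier exponential sum built from the single polynomial $F_0(t^{-1})$ of degree $d_0 = \delta$ (Lemma \ref{l:djpj}). The classical Adolphson--Sperber theorem, whose hypothesis $p \equiv 1 \pmod{\deg F_0}$ is exactly our congruence condition, then yields $\NP(\rho_{\chi_0}) = \HP(\delta)$, and propagation via Theorem \ref{t:KZ} extends this equality to every $\chi$ with $m_\chi > 0$.

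For the ``only if'' direction, I would start from the assumption $\NP(\rho_\chi) = \HP(\delta)$ at some $\chi$. In the generic case $m_\chi > m_0$, I would transfer to $\chi_0$ of order $p^{m_0+1}$ by the same propagation, and then invoke the converse of Adolphson--Sperber: the Newton polygon of the associated Artin--Schreier sum matches $\HP(\delta)$ only when the polynomial $F_{m_0}$ has integer degree (which forces $m_0 = 0$) and $p \equiv 1 \pmod \delta$. The residual case $m_\chi \leq m_0$ (possible only when $m_0 \geq 1$, i.e.\ when the conditions to be established have already failed) I would handle by direct inspection of the leading Artin--Hasse factor, showing that the first Newton slope strictly exceeds $(p-1)/\delta$ and hence precludes $\NP = \HP$.

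The hard part will be the careful identification of our $\delta$-Hodge polygon --- defined via the tower's ramification invariants --- with the classical Adolphson--Sperber Hodge polygon attached to the Newton polyhedron of $F_0$, and verifying that the coincidence of these two polygons at the first slope is equivalent to $p \equiv 1 \pmod \delta$. Once this classical input is in hand, the rest of the proof is formal manipulation via Theorem \ref{t:KZ} and Lemma \ref{l:pij}.
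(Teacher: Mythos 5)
Your overall strategy coincides with the paper's: Theorem \ref{t:KZ}, applied at each truncation point and combined with the Newton-over-Hodge bound for $\delta$-overconvergent characters, makes the equality $\NP(\rho_\chi)=\HP(\delta)$ independent of $\chi$ in the range $m_\chi>m_0$, reducing everything to a single finite character of minimal order plus a classical input. The paper reduces to a character of order $p$ and simply cites \cite[Theorem 1.3]{Kramer-Miller-Upton} for the base case (whose proof is the Adolphson--Sperber-type computation you describe); your treatment of the residual range $0<m_\chi\leq m_0$, where the character is $\delta'$-overconvergent for some $\delta'<\delta$ and hence $\NP\succeq\HP(\delta')\succ\HP(\delta)$, fills in a case the paper's one-line proof leaves implicit, and it is correct. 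Your ``if'' direction is also sound, modulo the wording that the factors $E^j_{F_j}(\pi_\chi)$ for $j\geq m_\chi$ become identically $1$ (not $0$) because $\tau_j(\pi_\chi)=0$ by Lemma \ref{l:pij}.

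The step that does not work as written is the ``only if'' direction when $m_0\geq 1$. A character of order $p^{m_0+1}$ does not produce a classical Artin--Schreier sum: the Dwork operator involves all of $E^0_{F_0},\dots,E^{m_0}_{F_{m_0}}$, so there is no ``converse of Adolphson--Sperber'' to quote, and the clause ``$F_{m_0}$ has integer degree'' is vacuous ($F_{m_0}$ is a polynomial; the issue is whether $\delta=d_{m_0}/p^{m_0}$ is an integer, and even $\delta\in\Z$ does not imply $m_0=0$). The integrality argument (all slopes of $\NP_{\pi_\chi}(\rho_\chi)$ are non-negative integers because the matrix entries lie in $\Z_q[\pi_\chi]$, so matching the first Hodge slope forces $(p-1)/\delta\in\Z$) yields conditions of the form $\delta\in\Z$ and $p\equiv 1\pmod\delta$, but it does not rule out $m_0\geq 1$ with $\delta\mid p-1$. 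To exclude that case you need a separate estimate: when $m_0\geq 1$ one has $d_0<\delta$ by Lemma \ref{l:djpj}, so $E^0_{F_0}$ contributes to the coefficient of $t^{-(p-1)}$ only in $\pi_\chi$-valuation at least $(p-1)/d_0>(p-1)/\delta$, while the factors $E^j_{F_j}$ with $j\geq 1$ contribute only in valuation at least $p>(p-1)/\delta$; hence the first Newton slope strictly exceeds the first Hodge slope. This is precisely the content the paper outsources to \cite[Theorem 1.3 and Remark 1.4]{Kramer-Miller-Upton}, and your proposal needs either that citation or the above argument in its place.
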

		\begin{proof}
			By Theorem \ref{t:KZ}, it suffices to prove the claim when $\chi$ is finite of order $p$. But we have shown this previously \cite[Theorem 1.3]{Kramer-Miller-Upton}.
		\end{proof}
		
		To conclude this section, let us briefly recall the notion of local-to-global extensions from \cite[\S 7.2]{Kramer-Miller-Upton}. Consider the pair $(A,\sigma)$, where $A = \Z_q[t^{-1}]$ and $\sigma:A \to A$ is the lifting of Frobenius defined by $\sigma(t^{-1}) = t^{-p}$. Since $\tilde{f} \in A^\infty$, the tower $F_\infty/F$ extends in a natural way to a tower $X_\infty/\A_{\F_q}^1$, or equivalently a continuous map
		\begin{equation*}
			\rho^\ext:\pi_1(\A_{\F_q}^1)	\to	\Z_p.
		\end{equation*}
		We may then consider the family of characters $\rho_\chi^\ext = \chi \circ \rho_\chi$, as $\chi$ varies through the continuous $R$-valued characters of $\Z_p$ as above. For each such $\chi$, let us write $L(\rho_\chi^\ext,s)$ for the Artin $L$-function of $\rho_\chi^\ext$ over $\A_{\F_q}^1$. If $\rho_\chi$ is overconvergent, then the Monsky trace formula guarantees that $L(\rho_\chi,s)$ is analytic in the disk $v_\pi(s) > -v_\pi(q)$. Moreover, we have the relation:
		\begin{equation*}
			\NP^{< e_\chi}(\rho_\chi^\ext) = \NP_{\pi_{q,\chi}}^{< e_\chi} L(\rho_\chi^\ext,s)	=	\NP^{< e_\chi}(\rho_\chi).
		\end{equation*}
		This shows that the truncated local polygon $\NP^{< e_\chi}(\rho_\chi)$ does not depend on our particular choice of lifting $(\calA,\sigma)$, or the choice of local splitting function $\tilde{E}(T)$.
		
\section{\texorpdfstring{$\Z_p$}{Zp}-Towers of Curves}

	In this final section we deduce our main theorems on $\Z_p$-towers of curves. Let $X/\F_q$ be a smooth affine curve whose smooth compatification $\overline{X}$ is ordinary. Let $X_\infty/X$ be a $\Z_p$-tower of curves with $\delta$-stable monodromy, corresponding to a continuous and surjective map
	\begin{equation*}
		\rho:\pi_1(X) \to \Z_p
	\end{equation*}
	
	
	\subsection{Results for Global Towers}\label{ss:slopeuniformity}
%
%
%
%
		For each $P \in S$, let $G_P$ denote the absolute Galois group of $F_P$. We obtain by restriction a $\Z_p$-tower of local fields $F_{P,\infty}/F_P$ with $\delta_P$-stable monodromy. This local tower corresponds to a continuous and surjective map
		\begin{equation*}
			\rho_P:G_P	\to	\Z_p.
		\end{equation*}
		As in  \S \ref{ss:localtoglobal}, the local character $\rho_P$ extends to $\rho_P^\ext:\pi_1(\A_{\F_q}^1) \to \Z_p$. Let $\chi$ be an $R$-valued character of $\Z_p$.
		Assume $\chi$ is finite or that $R$ has characteristic $p$. By Theorem \ref{t:local}, $\rho_\chi$ is $\pi_\chi$-adically \emph{$\delta$-Hodge} in the sense that $\rho_{\chi,P}$ is $\pi_\chi$-adically $\delta_P$-Hodge for every $P \in S$. By \cite[Proposition 4.11]{Kramer-Miller-Upton} and the Monsky trace formula \cite[Theorem 4.5]{Kramer-Miller-Upton}, the $L$-function $L(\rho_\chi,s)$ is entire.
		
		For each $j \geq 0$, let $v_{P,j}$ denote the highest ramification break (in upper numbering) of the finite extension $F_{P,j}/F_P$. We define $m_0$ to be the \emph{smallest} integer such that $v_{P,m_0+1} = p^{m_0} \delta_P$ for all $P$. Note that this means $v_{P,j} = p^{j-1} \delta_P$ for any $j> m_0$. We can now prove the combined form of Theorems \ref{t:globaltouching} and \ref{t:equicharacteristic}.

		\begin{theorem}
			Assume that $\overline{X}$ is ordinary. Let $d = p^{m_0}\sum_{P \in S} \delta_P$. Let $\chi$ be a character that is either finite or equicharacteristic and assume $m_\chi > m_0$. Then for all $1 \leq n < p^{m_\chi-m_0-1}$, the polygons $\NP^{< e_\chi}(\rho_\chi)$ and $\HP(X_\infty/X)$ agree on the interval
			\begin{equation}\label{eq:interval}
				[g-1+nd,g-1+|S|+nd].
			\end{equation}
		\end{theorem}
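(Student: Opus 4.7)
The plan is to derive this global touching statement from the local periodic touching of Corollary \ref{c:localperiodicity} via the local-to-global gluing machinery of \cite{Kramer-Miller-Upton}.

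First, I would apply Corollary \ref{c:localperiodicity} at each $P \in S$: under the hypothesis $m_\chi > m_0$ and with $n$ in the stated range, the local polygons $\NP(\rho_{P,\chi}^\ext)$ and $\HP(\delta_P)$ agree on the unit interval $[nd_P - 1,\, nd_P]$, where $d_P = p^{m_0}\delta_P$. In particular they share vertices at both endpoints. Corollary \ref{c:deltaHodge} simultaneously verifies that $\rho_\chi$ is $\pi_\chi$-adically $\delta$-Hodge, which is the hypothesis needed to feed into the gluing results of \cite{Kramer-Miller-Upton}.

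Next, I would record the elementary combinatorial fact that the global Hodge polygon is linear on $[g-1+nd,\, g-1+|S|+nd]$ with slope $np^{m_0}(p-1)$. Indeed, the positive slopes of $\HP(X_\infty/X)$ form the sorted merge over $P \in S$ of the local slopes $\tfrac{k(p-1)}{\delta_P}$. The slopes bounded by $np^{m_0}(p-1)$ are exactly those with $k \leq nd_P$, contributing $nd$ slopes in total; the value $np^{m_0}(p-1)$ itself is attained precisely $|S|$ times in the merge (once per $P$, when $k = nd_P$); and the next slope at each local polygon strictly exceeds $np^{m_0}(p-1)$. After the initial $g-1+|S|$ zero slopes, this produces the claimed flat stretch of length $|S|$ ending at $g-1+|S|+nd$.

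I would then invoke the main gluing theorem of \cite{Kramer-Miller-Upton}. Since $\overline{X}$ is ordinary and $\rho_\chi$ is $\delta$-Hodge, this result transfers the shared vertices of the local polygons at $x = nd_P - 1$ and $x = nd_P$ into shared vertices of $\NP^{< e_\chi}(\rho_\chi)$ and $\HP(X_\infty/X)$ at the corresponding merged positions $x = g-1+nd$ and $x = g-1+|S|+nd$. Combined with the Newton-over-Hodge bound \eqref{eq: Newton over Hodge finite characters} (or, for equicharacteristic $\chi$, the analogous bound from Theorem \ref{t:equicharacteristic}(1)) and the linearity of HP on the interval, the NP is squeezed onto HP throughout $[g-1+nd,\, g-1+|S|+nd]$.

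The main obstacle I expect is the bookkeeping in the gluing step: one must verify that the local touching vertex at $x = nd_P$ in each $\HP(\delta_P)$ corresponds, under the sorted merge, to the global vertex at $g-1+|S|+nd$ (and analogously for $nd_P - 1$ and $g-1+nd$), and that the Kramer-Miller-Upton local-to-global criterion for shared vertices is formulated in a way that directly accepts this vertex correspondence. This is essentially combinatorial, but it is the step where the precise formalism of local-to-global comparison from \cite{Kramer-Miller-Upton} is actually used.
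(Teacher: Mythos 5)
Your proposal is correct and follows essentially the same route as the paper's proof: local periodic touching (Corollary \ref{c:localperiodicity}) gives shared endpoints of the slope-$<ne_0$ and slope-$<ne_0-\varepsilon$ truncations at each $P$, the local-to-global theorem of \cite{Kramer-Miller-Upton} transfers these shared vertices to the global polygons (with $r\le e_\chi$ giving the range $n<p^{m_\chi-m_0-1}$), and the linearity of $\HP(X_\infty/X)$ on the interval, consisting of $|S|$ segments of slope $ne_0$, forces agreement throughout. Your combinatorial bookkeeping of the merged Hodge slopes matches the paper's.
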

		\begin{proof}
			Let $e_0 = p^{m_0}(p-1)$, and let $\varepsilon < \min\{(p-1)/\delta_P:P \in S\}$. By Corollary \ref{c:localperiodicity}, the polygons $\NP^{< r}(\rho_{\chi,P}^\ext)$ and $\HP^{< r}(\delta_P)$ have the same terminal point for $r = n e_0$ or $ne_0-\varepsilon$. By \cite[Theorem 6.14]{Kramer-Miller-Upton}, the polygons $\NP^{< r}(\rho_\chi)$ and $\HP^{< r}(X_\infty/X)$ have the same terminal point for such $r$ as long as $r \leq e_\chi$. This condition is equivalent to $n < p^{m_\chi-m_0-1}$. The theorem follows since the restriction of $\HP(X_\infty/X)$ to the interval (\ref{eq:interval}) consists of $|S|$ segments of slope $n e_0$.
		\end{proof}
		
		\begin{proof}[Proof of Theorem \ref{theorem: intro stable implies uniformity}]
			Assume $m_\chi > m_0$. By renormalizing, we see that $\NP_q^{< 1}(\rho_\chi)$ lies above $\tfrac{1}{e_\chi}\HP^{<e_\chi}(\delta)$ and that both polygons meet periodically with period $d$. In particular, we see that $\NP_q^{< 1}(\rho_\chi) \to \tfrac{1}{e_\chi}\HP^{<e_\chi}(\delta)$ as $m_\chi \to \infty$. The result follows since the slopes of $\tfrac{1}{e_\chi}\HP^{<	e_\chi}(\delta)$ are equidistributed in $[0,1)$ as $m_\chi \to \infty$.
		\end{proof}
		
		\begin{theorem}\label{t:NPHPequal}
			If $\chi$ is finite (resp. the equicharacteristic character), then $\NP^{< e_\chi}(\rho_\chi^\ext)$ and $\HP^{< e_\chi}(X_\infty/X)$ (resp. $\NP(\rho_\chi^\ext)$ and $\HP(X_\infty/X)$) agree if and only if:
			\begin{enumerate}
				\item	$m_0 = 0$, so that $\delta_P \in \Z$ for all $P \in S$.
				\item	$p \equiv 1 \pmod{\delta_P}$ for all $P \in S$.
			\end{enumerate}
		\end{theorem}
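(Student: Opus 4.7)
The plan is to reduce the question to the local analogue, Theorem \ref{t:localEquality}, by invoking the local-to-global machinery of \cite{Kramer-Miller-Upton} (the same engine used in the proof of Theorem \ref{t:globaltouching} above). By Corollary \ref{c:deltaHodge} the character $\rho_\chi$ is $\pi_\chi$-adically $\delta_P$-Hodge at every $P \in S$, so the hypotheses needed to invoke the main gluing theorem of \cite{Kramer-Miller-Upton} are in place. Recall also that ordinarity of $\overline{X}$ forces the ``genus part'' of the global Hodge polygon to consist of exactly $2(g-1+|S|)$ slopes equal to $0$ and $e_\chi$, matching the known contribution from the unit-root crystal on $\overline{X}$.

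For the ``if'' direction, assume $m_0 = 0$ and $p\equiv 1\pmod{\delta_P}$ for all $P \in S$. Applying Theorem \ref{t:localEquality} at each local place $P$, we obtain a full equality $\NP^{<e_\chi}(\rho^{\ext}_{P,\chi}) = \HP^{<e_\chi}(\delta_P)$; in particular every vertex of the $\delta_P$-Hodge polygon is a vertex of the corresponding local Newton polygon. I would then feed this into \cite[Theorem 6.14]{Kramer-Miller-Upton} (the same result used in the proof of Theorem \ref{t:globaltouching}), now applied not just on a periodic sequence of intervals but on \emph{every} segment between successive Hodge vertices up to $e_\chi$. Summing the local contributions over $P\in S$ and adding the $g-1+|S|$ slopes equal to $0$ coming from ordinarity produces a sequence of vertices of $\NP^{<e_\chi}(\rho_\chi)$ that coincide with every vertex of $\HP^{<e_\chi}(X_\infty/X)$, forcing the two polygons to agree.

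For the ``only if'' direction, suppose $\NP^{<e_\chi}(\rho_\chi)$ and $\HP^{<e_\chi}(X_\infty/X)$ agree. Since $\NP^{<e_\chi}(\rho_\chi) \succeq \HP^{<e_\chi}(X_\infty/X)$ always, any failure of the equality $\NP^{<r}(\rho^{\ext}_{P,\chi}) = \HP^{<r}(\delta_P)$ at some local place $P$ would push $\NP^{<e_\chi}(\rho_\chi)$ strictly above $\HP^{<e_\chi}(X_\infty/X)$ via the local-to-global inequality part of \cite[Theorem 6.14]{Kramer-Miller-Upton}, contradicting the assumption. Hence each local Newton polygon equals the corresponding $\delta_P$-Hodge polygon, and Theorem \ref{t:localEquality} forces $m_0 = 0$ and $p \equiv 1 \pmod{\delta_P}$ for every $P \in S$. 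The equicharacteristic case is handled identically, using the equicharacteristic half of Theorem \ref{t:KZ} and Corollary \ref{c:localperiodicity}, which were designed precisely so that every step above goes through verbatim for $\chi = \chi_0$.

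The main obstacle is justifying that the local-to-global comparison works \emph{vertex by vertex} and not merely on the periodic intervals of Theorem \ref{t:globaltouching}. This is where the $\delta_P$-Hodge property supplied by Corollary \ref{c:deltaHodge} does the heavy lifting: once the local Newton polygon is known to lie strictly above its Hodge bound at some interior point, the gluing formula of \cite{Kramer-Miller-Upton} propagates this strict inequality to the global polygon, which is exactly what one needs for the ``only if'' direction.
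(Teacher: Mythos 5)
Your proposal is correct and follows essentially the same route as the paper: reduce to the local statement (Theorem \ref{t:localEquality}) via the vertex-sharing biconditional \cite[Theorem 6.14]{Kramer-Miller-Upton}, with Corollary \ref{c:deltaHodge} supplying the $\delta$-Hodge hypothesis, and handle the equicharacteristic case by the same argument. The only cosmetic difference is that for necessity the paper simply cites \cite[Remark 1.4]{Kramer-Miller-Upton}, whereas you re-derive it from the ``only if'' half of the local-to-global theorem together with Theorem \ref{t:localEquality}; both are fine.
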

		\begin{proof}
			Consider the finite character case. We know from \cite[Remark 1.4]{Kramer-Miller-Upton} that the conditions are necessary. By \cite[Theorem 6.14]{Kramer-Miller-Upton}, it suffices to prove that $\NP^{< e_\chi}(\rho_{\chi,P}^\ext)$ and $\HP^{< e_\chi}(\delta_P)$  agree for all $P \in S$. This follows immediately from Theorem \ref{t:localEquality}. The equicharacteristic case is almost identical.
		\end{proof}
		
		\begin{corollary}
			Let $\rho:\pi_1(X) \to \C_p^\times$ be a character of finite order $p^n$. For each $P \in S$, let $d_P$ denote the Swan conductor at $P$. Then $\NP^{< e_\chi}(\rho) = \HP^{< e_\chi}(\rho)$ if and only if
			\begin{enumerate}
				\item	$\overline{X}$ is ordinary.
				\item	$\delta_P = d_P/p^{n-1} \in \Z$ for all $P \in S$.
				\item	$p \equiv 1 \pmod{\delta_P}$ for all $P \in S$.
			\end{enumerate}			
		\end{corollary}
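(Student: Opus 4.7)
The plan is to reduce this corollary to Theorem \ref{t:NPHPequal} by embedding $\rho$ into an appropriate $\Z_p$-tower. By Artin-Schreier-Witt theory (Theorem \ref{t:asw}), $\rho$ corresponds to a class in $W_n(\overline{A})/\wp W_n(\overline{A})$, and we lift it to a class in $W(\overline{A})/\wp W(\overline{A})$ defining a surjective $\tilde{\rho}:\pi_1(X) \to \Z_p$ with $\rho = \chi \circ \tilde{\rho}$ for the primitive finite character $\chi:\Z_p \to \C_p^\times$ of order $p^n$ determined by a chosen primitive $p^n$-th root of unity. Such a lift exists because the reduction map on Artin-Schreier-Witt groups is surjective for affine $X$ by Lemma \ref{l:Hilbert90}, giving complete freedom in the higher Witt coordinates.

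The first main step is to arrange the lift so that the resulting $\Z_p$-tower $X_\infty/X$ has $\bm\delta$-stable monodromy with $\delta_P = d_P/p^{n-1}$ at every $P \in S$. Working in the canonical form of Proposition \ref{p:localtwist} locally at $P$, the coefficients $c_k$ with $v_p(c_k) < n$ are determined by $\rho|_{G_P}$; since the Swan conductor of a character of order $p^n$ equals its largest upper-numbering ramification break, Proposition \ref{p:vj} with $j = n$ gives
\[
\max\{kp^{-v_p(c_k)}: v_p(c_k) < n\} \;=\; v_{P,n}/p^{n-1} \;=\; d_P/p^{n-1}.
\]
I then choose the remaining $c_k$ with $v_p(c_k) \geq n$ supported at small values of $k$ (for instance by adding a single nontrivial term at $k=1$ at each Witt level $i \geq n$), so that $kp^{-v_p(c_k)}$ never exceeds the running maximum. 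Lemma \ref{l:djpj} then certifies $\delta_P$-stability at $P$, and we perform these adjustments simultaneously for all $P \in S$ using a global representative with prescribed polar parts at each point of $S$.

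With this tower in hand, apply Theorem \ref{t:NPHPequal} to $X_\infty/X$ and the character $\chi$. The truncated Hodge polygon $\HP^{<e_\chi}(X_\infty/X)$ depends only on the genus $g$, the set $S$, and the tuple $\{\delta_P\}_{P \in S} = \{d_P/p^{n-1}\}_{P \in S}$, so it coincides with the intrinsic Hodge polygon $\HP^{<e_\chi}(\rho)$ attached to $\rho$. Under this identification, the three conditions of Theorem \ref{t:NPHPequal}---namely ordinariness of $\overline{X}$, $m_0 = 0$, and $p \equiv 1 \pmod{\delta_P}$ for all $P$---translate directly into the three conditions of the corollary, using that $d_P$ is always a positive integer and that $m_0 = 0$ is equivalent to each $\delta_P = d_P/p^{n-1}$ lying in $\Z$.

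The main obstacle is the simultaneous construction of the $\bm\delta$-stable lift: while Proposition \ref{p:vj} handles the local adjustment cleanly at a single $P$, assembling the local choices into a single global class of $W(\overline{A})/\wp W(\overline{A})$ that still reduces to the prescribed element of $W_n(\overline{A})/\wp W_n(\overline{A})$ requires invoking the surjectivity of the reduction and the freedom to prescribe arbitrary polar parts in rational functions on $\overline{X}$. Once this flexibility is secured, the corollary follows immediately from Theorem \ref{t:NPHPequal}.
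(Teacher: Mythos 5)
Your overall strategy---factor $\rho$ through a $\Z_p$-tower with $\bm\delta$-stable monodromy and invoke Theorem \ref{t:NPHPequal}---is the same core idea as the paper, but you execute the reduction differently. The paper first applies \cite[Theorem 1.1]{Kramer-Miller-Upton} to reduce the whole corollary to the case $X = \A_{\F_q}^1$ with $S = \{\infty\}$; there the Artin--Schreier--Witt representative is a \emph{polynomial} $\overline{f} \in (\Z_q/p^n\Z_q)[t]$, a coefficient-wise lift to $\Z_q[t]$ is again a polynomial, and Proposition \ref{p:vj} gives stability for free because a finite set of values $kp^{-v_p(c_k)}$ trivially attains its maximum. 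You instead build the tower globally over $X$, which forces you to control the local canonical forms at all $P \in S$ simultaneously. This can be made to work, but not by the mechanism you describe: on a curve of positive genus you \emph{cannot} ``prescribe arbitrary polar parts'' (Riemann--Roch obstructs this in low degrees), and your recipe of ``adding a single nontrivial term at $k=1$ at each Witt level'' is neither necessary nor clearly well-defined globally. The clean fix is to take the coordinate-wise zero-extension of a Witt-vector representative of $\rho$ and then observe that reduction to the canonical form of Proposition \ref{p:localtwist} only ever replaces a term $[a]p^jt^{-k}$ by terms with $k'p^{-j'} \leq kp^{-j}$, so the supremum of $kp^{-v_p(c_k)}$ is still attained at a term with $v_p(c_k) < n$ and equals $d_P/p^{n-1}$. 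As it stands, this step of your argument is asserted rather than proved, and the assertion you lean on is false in general.

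There is a second, more clear-cut gap: the necessity of condition (1). Theorem \ref{t:NPHPequal} is proved under the standing hypothesis that $\overline{X}$ is ordinary, so its ``only if'' direction cannot tell you that $\NP^{<e_\chi}(\rho) = \HP^{<e_\chi}(\rho)$ forces $\overline{X}$ to be ordinary; you have silently promoted a hypothesis of that theorem to one of its conclusions. The paper handles the necessity of all three conditions by citing \cite[Remark 1.4]{Kramer-Miller-Upton}, and you need some such input (e.g.\ comparing the number of slope-zero segments of $\NP$ with the $p$-rank via Deuring--Shafarevich) to close the ``only if'' direction. Your identification of $m_0 = 0$ for the constructed tower with the integrality condition $\delta_P = d_P/p^{n-1} \in \Z$ is correct but also deserves a sentence of justification via Proposition \ref{p:vj}: since $p \nmid k_0$ for the index $k_0$ realizing the maximum, $\delta_P \in \Z$ forces $v_p(c_{k_0}) = 0$ and hence $v_{P,1} = \delta_P$.
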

		\begin{proof}
			Once again, we know from \cite[Remark 1.4]{Kramer-Miller-Upton} that the conditions are necessary. Moreover, by \cite[Theorem 1.1]{Kramer-Miller-Upton}, it suffices to prove the claim when $X = \A_{\F_q}^1$. By Theorem \ref{t:NPHPequal}, we need only show that $\rho$ factors through a $\Z_p$-tower $X_\infty/\A_{\F_q}^1$ with $\delta_\infty$-stable monodromy at $\infty$. To see this, choose a factorization
			\begin{equation*}
				\rho:\pi_1(X)	\to	\Z/p^n\Z	\to	\C_p^\times.
			\end{equation*}
			The first map can be obtained via Artin-Schreier-Witt theory from a polynomial $\overline{f}  = \overline{c}_0+\cdots + \overline{c}_d t^d \in \Z_q/p^n\Z_q[t]$ with $\overline{c}_k = 0$ whenever $p | k$. Choose a lifting $f = c_0 + \cdots + c_d t^d \in \Z_q[t]$ of $\overline{f}$ such that $c_k = 0$ whenever $p | k$, and let $X_\infty/\A_{\F_q}^1$ be the corresponding tower. From Proposition \ref{p:vj}, we see immediately that $X_\infty/\A_{\F_q}^1$ has $\delta_\infty$-stable monodromy at $\infty$. This completes the proof.
		\end{proof}
		
	\subsection{Results for Twisted Towers}\label{ss:twisted}
		We are now able to prove Theorem \ref{t:twisted uniformity} and Theorem \ref{t:twisted stability} 
		by combining the results of \S \ref{ss:slopeuniformity} with the results of \cite{Kramer-Miller3}.
		We begin by recalling the notation and results from \cite{Kramer-Miller3}. Let $\psi:\pi_1(X) \to \Z_p^\times$
		be a finite tame character of order $c$. Note that $c|(p-1)$. Let $X^{tame}$ be the
		cover of $X$ corresponding to $\ker(\psi)$ and let $s_P$ be the ramification index over $P$.
		For each $P \in S$
		we obtain a local representation $\psi_P: G_P \to \Z_p^\times$. 
		There exists $e_P \in \frac{1}{p-1}\Z$ such that $G_P$ acts on $u_P^{e_P}$ by
		$\psi_P$. Note that $e_P$ is well defined up to addition by an integer. We define the \emph{exponent}
		$\mathbf{e}_P \in \frac{1}{p-1}\Z/\Z$ of $\psi$ at $P$ to be the equivalence class of $e_P$ modulo $\Z$.
		This is analogous to the notion of exponents in the theory of complex differential equations with regular 
		singularities. We then define $0\leq \epsilon_P\leq p-2$ to be the unique integer such that $\frac{\epsilon_P}{p-1}$
		is in $\mathbf{e}_P$. Note that $\epsilon_P=0$ if and only if $\psi$ is unramified at $P$.
		Finally, we define a global invariant
		\begin{align*}
			\Omega_{\psi}&= \frac{1}{p-1}\sum\limits_{P\in S} \epsilon_P.
		\end{align*}
		One may show that $\Omega_\psi$ is an integer (see \cite[\S 5.3.2]{Kramer-Miller3}). 
		
		Let $X_\infty/X$ be $\delta$-stable tower. We define the ``twisted'' Hodge polygon as follows:
		\begin{align*}
			\HP(\psi \otimes X_\infty/X) &= \underbrace{\{0,\dots,0\}}_{g-1 + |S| - \Omega_\psi}\sqcup \bigsqcup_{P \in S} 
			\Bigg\{\frac{1-\epsilon_P}{\delta_P},\frac{2-\epsilon_P}{\delta_P}, \dots \Bigg \} . 
		\end{align*}
		By \cite[Theorem 1.1]{Kramer-Miller3} we have 
		\begin{align}\label{eq:twisted NP over HP}
			\NP^{<e_\chi}(\psi \otimes \chi) \succeq \HP^{<e_\chi}(\psi \otimes X_\infty/X)
		\end{align}
		for any finite character $\chi:\Gal(X_\infty/X) \to \C_p^\times$. 
		
		\begin{proof}
			(Proof of Theorem \ref{t:twisted uniformity} and Theorem \ref{t:twisted stability}) Let $X^{\tame}_\infty/X^{\tame}$ be the pullback
			of the tower $X_\infty/X$ along $X^{\tame} \to X$. Note that $X^{\tame}_\infty/X^{\tame}$ is
			$\delta^{\tame}$-stable, where $\delta^{\tame}_P=s_P\delta_P$. We have a decomposition
			\begin{align*}
				\mathrm{Ind}^{X^{\tame}}_X(X_\infty^{\tame}/X^{\tame}) &= \bigoplus_{i=0}^{c-1} \psi^{\otimes i} \otimes X_\infty/X.
			\end{align*}
			This follows by considering the induction $\mathrm{Ind}^{X^{\tame}}_{X}(\chi^{\tame})$ for any character $\chi^{\tame}:\Gal(X_\infty^{\tame}/X^{\tame}) \to \C_p^\times$. We obtain a corresponding decomposition of Hodge polygons
			\begin{align}\label{eq:decomposition of Hodge polygons}
				\HP(X^{\tame}_\infty/X^{\tame}) &= \bigsqcup_{i=0}^{c-1} \HP(\psi^{\otimes i} \otimes X_\infty/X).
			\end{align}
			This decomposition essentially follows from the Riemann-Hurwitz theorem and by observing how
			the exponents of $\psi^{\otimes i}$ vary with $i$. Similarly, for any $\chi:\Gal(X_\infty/X) \to \C_p^\times$
			we have a decomposition
			\begin{align}\label{eq:decomposition of Newton polygons}
				\NP(\chi^\tame)&=\bigsqcup_{i=0}^{c-1} \NP(\psi^{\otimes i} \otimes \chi).
			\end{align}
			By combining equations \eqref{eq:decomposition of Hodge polygons} and \eqref{eq:decomposition of Newton polygons}
			with the Newton-over-Hodge result \eqref{eq:twisted NP over HP}, we see that $\NP^{<r}(\chi^\tame)$ and $\HP^{<r}(X^{\tame}_\infty/X^{\tame})$ have the same endpoints if and only if 
			$\NP^{<r}(\psi^{\otimes i} \otimes \chi)$ and $\HP^{<r}(\psi^{\otimes i} \otimes X_\infty/X)$
			have the same endpoints for each $0\leq i < c$. Slope uniformity then follows from Theorem \ref{t:globaltouching}
			applied to $X^{\tame}_\infty/X^{\tame}$. The slope stability result follows from Theorem \ref{theorem: intro stable with congruence implies stability} applied to $X^{\tame}_\infty/X^{\tame}$.
		\end{proof}
%
%
%
		
\bibliographystyle{plain}
\bibliography{Stable}
\end{document}